\documentclass[11pt]{article}
\usepackage[T1]{fontenc}
\usepackage[utf8]{inputenc}
\usepackage{lmodern}
\usepackage[a4paper,margin=1in]{geometry}
\usepackage{amsmath,amssymb,amsthm,mathtools,authblk}
\usepackage{graphicx,booktabs,array,microtype,flafter}
\usepackage[colorlinks=true,linkcolor=blue,citecolor=blue,urlcolor=blue]{hyperref}
\newtheorem{theorem}{Theorem}[section]
\newtheorem{lemma}[theorem]{Lemma}
\newtheorem{proposition}[theorem]{Proposition}
\newtheorem{corollary}[theorem]{Corollary}
\theoremstyle{definition}
\newtheorem{definition}[theorem]{Definition}
\newtheorem{assumption}{Assumption}
\newtheorem{example}[theorem]{Example}
\theoremstyle{remark}
\newtheorem{remark}[theorem]{Remark}
\newcommand{\R}{\mathbb R}

\newcommand{\E}{\mathbb E}
\newcommand{\Prob}{\mathbb P}
\newcommand{\Tdy}{\mathbb T}
\newcommand{\dd}{\mathrm d}

\newcommand{\Lip}{\operatorname{Lip}}

\newcommand{\keywords}[1]{\par\smallskip\noindent\textbf{Keywords:} #1}

\title{Measuring the roughness of a signal}
\author[1]{Rama CONT}
\author[2]{Purba DAS\thanks{Email: purba.das@kcl.ac.uk}}
\affil[1]{Mathematical Institute, University of Oxford}
\affil[2]{Department of Mathematics, King's College London}
\date{\today}

\begin{document}
\maketitle
\begin{abstract}
We study the non-parametric estimation of the roughness of a path from discretely sampled observations.   Our approach is based on the concept of normalized power variation of a path.
The estimator identifies a variation
index by comparing coarse increments with local sums of fine increments.
We prove pathwise consistency under H\"older regularity, nondegenerate coarse critical power sums, uniform local bounds on fine critical power sums, and separation of the two sampling scales and obtain a convergence rate for the estimator. Neither convergence of critical
variation nor monotonicity of the finite-sample statistic is required.
The result applies to fractional Brownian motion under an explicit dyadic
sampling rule and to Schauder series with bounded coefficients bounded away from zero. For signed Takagi--Landsberg functions we obtain a uniform
asymptotic formula and a quantitative bound for every root of the estimating
equation in a fixed parameter interval.  For fractional Brownian motion, we obtain asymptotic uniqueness of the estimating root and a polynomial almost-sure
error bound. We illustrate the results with numerical experiments for Takagi-- Landsberg functions, fractional Brownian motion and the stochastic heat equation.
\end{abstract}
\keywords{variation index; roughness index; power variation; normalized power variation;
pathwise consistency; convergence rate; fractional Brownian motion; Takagi--Landsberg functions.}
\par\smallskip\noindent\textbf{MSC2020:} Primary 62G05, 60G17;
Secondary 62G20, 60G22, 26A27, 62M09.
{\small\tableofcontents}
\newpage

\section{Measuring roughness from discrete observations}

The roughness of a continuous signal can be described through H\"older or
Besov regularity, the fractal dimension of its graph, or the scaling of its
power variations \cite{rosenbaum2009,GneitingSevcikovaPercival2012}. These
descriptions coincide for fractional Brownian motion (fBM) but need not agree
for a general path \cite{erhandaskim2023,Schied2022}. Roughness is a
small-scale property, distinct from long-range dependence. Although both properties are often parameterized  by a Hurst exponent in fractional models
\cite{mandelbrot1968,beran,Abry2011}, they can be decoupled in
stationary models \cite{Gneiting2004,pakkanen2021}. In models driven by
fractional noise, the roughness of sample paths is set by a model parameter,
so estimating it from data serves as a model diagnostic \cite{das2022roughVol}.

\paragraph{Roughness index along a sequence of partitions.}
We measure roughness through the variation index $p^\pi(x)$ along a refining
sequence $\pi$ of sampling grids, the threshold above which $p$-th power sums
of increments vanish, and its reciprocal, the roughness index $H^\pi(x)$.
This choice is motivated by pathwise calculus, where $p$-th variation along a
partition sequence is the quantity entering It\^o--F\"ollmer change of
variable formulas \cite{follmer1981,perkowski2019,das2020,das2021}. Unlike
the critical $p$-th variation itself, the index does not require a limit to
exist: critical variation may oscillate or depend on the partition while the
index remains stable \cite{DasSchied2026}, and conditions for partition
invariance are given in \cite{Cont2026}. On dyadic
partitions, $p^\pi(x)$ is the reciprocal of the roughness exponent of Han and
Schied \cite{Schied2022} whenever that exponent exists. It never exceeds the
classical $p$-variation index \cite{dudley1999,dudley2011}, and the two
coincide for fBM.

\paragraph{Estimating roughness using scaling of power variations.}
Most estimators of roughness compare power variations, variograms or related
quantities at several sampling scales, under a stochastic model for the path.
This line starts with Gladyshev's limit theorem for the quadratic variation of
Gaussian processes \cite{Gladyshev1961} and includes variogram and
increment-based estimators of the Hurst or fractal index
\cite{ConstantineHall1994,IstasLang1997,KentWood1997,DaviesHall1999,Coeurjolly2001,ChanWood2004,Berzin2007},
box-counting \cite{HallWood1993} and increment ratios \cite{bardet2009}; see
\cite{GneitingSevcikovaPercival2012} for a comparison. Our statistic belongs to
this family. When roughness is locally homogeneous and the grid is uniform, the
normalized variation is approximately proportional to the ratio
$S_K(q)/S_{L}(q)$ of power sums at the coarse and fine levels. The estimating
equation therefore selects the order $q$ at which the $q$-th order variogram
scales linearly with the lag between the two levels. This empirical scaling of power variations also underlies the multifractal formalism \cite{jaffard1997,jaffard1997b,CalvetFisher2002}. Variogram estimators
instead fix the order, usually $q=2$, and estimate a slope by regression
across lags, which raises the question of how many lags to use
\cite{ConstantineHall1994,DaviesHall1999}. Here we fix two scales and solve for the
order.  This literature studies such estimators through laws of large numbers and central limit theorems for a given
class of processes, typically Gaussian processes with (locally) stationary
increments or transforms of them. In contrast, our consistency theorem is a
deterministic statement about a single path, applicable to any sample path satisfying certain conditions.

\paragraph{Nonparametric estimation of roughness indices.}
Closest to our setting are estimators of a roughness index from a single
sampled path without a stochastic model. Norvai\v{s}a and Salopek estimate the
classical $p$-variation index with oscillation-summing
statistics derived from box-counting \cite{NS02}; see also
\cite{Norvaisa2006}. They identify the limiting index when the box dimension
of the graph equals $2-1/(1\vee p_{\mathrm{var}}(x))$, but leave the
asymptotic analysis of the sampled estimators open. Their index also covers
discontinuous paths, such as those of stable processes, whereas we target the
index $p^\pi(x)\le p_{\mathrm{var}}(x)$ of a continuous path. Han and Schied
\cite{Schied2022} infer the roughness exponent from the scaling of quadratic
variation across dyadic levels. Their consistency result requires a reverse
Jensen condition on the Faber--Schauder coefficients, which links quadratic
and higher-order power sums; for fBM with drift they obtain almost-sure rates and restore scale
invariance by a rescaling step. Our estimator instead locates the critical
exponent directly as a root in $q$, requires no condition relating power sums
of different orders, and is scale-invariant by construction. 
Our general result gives an error bound of order $1/|\log|\pi^K||$; sharper bounds are obtained for specific classes of functions and processes.

\paragraph{Latent roughness and observation error.}
A related but harder problem is to estimate the roughness of an unobserved
process, such as  volatility, from observations of a proxy or estimator
\cite{das2022roughVol,gatheral2014,fukasawa2019,ChongEtAl2024Minimax,HanSchiedRates}, whose
estimation error can itself produce apparent roughness \cite{das2022roughVol}.
Minimax rates are then much slower than under direct observation
\cite{ChongEtAl2024Minimax}, as in the case of
additive measurement noise \cite{Gloter2007}. Here we focus on the estimation of roughness based on direct
observations of the path, but our framework accommodates small measurement noise (see Remark \ref{rem:measurement-noise}).

\subsection{Contributions}
We explore a nonparametric approach for estimating roughness from discrete observations of a single path, based on the normalized power variation statistic introduced in \cite{das2022roughVol}. This estimator computes the critical exponent from observations at two levels of a balanced refining partition sequence, with scale invariance by construction. Our results apply to a large class of non-uniform sampling grids.
We study the asymptotic properties of the normalized variation and the roughness estimator and establish a consistency result along with the convergence rate for this estimator. 

Our approach is pathwise and non-parametric; the consistency proof and convergence rates in our main result do not make any probabilistic assumptions on the data-generating process, with a guaranteed upper bound of order $1/|\log(|\pi^K|)|$ on the estimation error, which is in fact attained in a deterministic class of paths. We focus here on estimating roughness from observations directly sampled from the path, rather than a proxy; latent roughness is not treated here.

Our main result is a pathwise consistency theorem for a roughness estimator   based on the normalized power variation, under two-sided
local critical variation bounds, along with a convergence rate estimate (Theorem \ref{thm:consistency}). Away from the critical exponent $p_0$, our statistic tends to $0$ (for $q<p_0$) or to $\infty$ (for $q>p_0$).
We therefore do not require monotonicity of the statistic nor uniqueness of the root. 

Although our main result is a deterministic statement, it is applicable to sample paths of a wide range of stochastic processes. Applying it to a stochastic model
reduces to proving almost-sure two-sided bounds on local critical power sums.
Such bounds follow from limit theorems and concentration inequalities for
power variations of Gaussian processes
\cite{Gladyshev1961,klein1975,viitasaari2019,tudor2013,Norvaivsa2015,BoucheronLugosiMassart2013}
and   solutions of stochastic heat equations
\cite{Swanson2007,pospivsil2007,OliveraTudor2025}. Deterministic paths with
prescribed roughness, such as Schauder series and signed Takagi--Landsberg
functions \cite{takagi1901,schied2016,schied2016b,Misura2019} also provide
examples in which the statistic can be computed exactly.
We verify the assumptions
for fBM, for a deterministic
class of Schauder expansions and for the stochastic heat equation with space-time white noise. For the signed Takagi--Landsberg subclass, we obtain an explicit error bound. In the stochastic examples we show that our conditions hold almost-surely,  leading to almost-sure consistency and asymptotic error bounds. In particular, in Theorem \ref{thm:fbm-polynomial-rate} we give polynomial almost-sure error bounds for the estimation of the Hurst exponent for fractional Brownian motion.

\subsection{Outline} Section~\ref{sec:variationIdx} recalls some definitions and notations. 
Section~\ref{sec:estimator} defines the normalized variation statistic and the roughness estimator
 and proves our main result (Theorem \ref{thm:consistency}): a consistency criterion for the roughness estimator along with an estimate of the convergence rate. Section~\ref{sec:normalized-p-var} provides stronger statements under the assumption of finite $p$-th variation. 
 Proposition~\ref{prop:normalization} gives sufficient conditions
under which the normalized power variation statistic at the critical
exponent converges uniformly on $[0,T]$ to the function $t\mapsto t$, which
 justifies the  target in the estimating equation.
 Section~\ref{sec:schauder} applies these results to functions constructed with Faber--Schauder series, 
and~Section \ref{sec:fbm} discusses applications to the pathwise estimation of the Hurst exponent for fractional Brownian motion and derives the corresponding error bounds (Theorem \ref{thm:fbm-polynomial-rate}).
Section~\ref{sec:stability} proves stability under smooth transformations and perturbation.
Section~\ref{sec.simulation} reports  numerical illustrations of our results for Takagi-Landsberg functions, fractional Brownian motion and the stochastic heat equation with space-time white noise.

\section{Roughness index of a path}\label{sec:variationIdx}
\subsection{Partitions and power variation}
Throughout, $T>0$ and $x\in C([0,T],\R)$. For $0<\nu\leq1$, write
$C^\nu$ for the space of $\nu$-H\"older continuous paths and
$C^{\nu-}=\bigcap_{0<\alpha<\nu}C^\alpha$.
Let $\pi=(\pi^n)_{n\geq1}$ be a sequence of finite partitions
\[
 \pi^n=\{0=t^n_0<t^n_1<\cdots<t^n_{N(\pi^n)}=T\},
\]
with vanishing mesh. Its mesh and minimum interval length are denoted by $|\pi^n|$ and $\underline{\pi^n}$ respectively. It is \emph{refining} if
$\pi^n\subseteq\pi^{n+1}$, and \emph{balanced} if
$\sup_n|\pi^n|/\underline{\pi^n}<\infty$; cf.~\cite{das2020}.
We write $I=[u,v]\in\pi^n$ for consecutive partition points,
$|I|=v-u$, and $\Delta_Ix=x(v)-x(u)$.
On $[0,1]$, $\Tdy^n=\{j2^{-n}:0\leq j\leq2^n\}$ denotes the dyadic partition.

For $p\geq1$, define 
\begin{equation}\label{eq:power-sums}
 A_n^{(p)}(t) : =\sum_{\substack{I=[u,v]\in\pi^n\\v\leq t}}|\Delta_Ix|^p,
 \qquad S_n(p) :=A_n^{(p)}(T).
\end{equation}
Summing over completed cells $[u,v]\in\pi^n\cap [0,t]$ makes $A_n^{(p)}$ nondecreasing in $t$.
Summing over stopped-increments (i.e. $(x(t^n_{i+1}\wedge t)-x(t^n_i\wedge t))$) adds at most one partial increment; the convention summing over left endpoints adds at most one full increment.
Each differs from \eqref{eq:power-sums} by at most
$\omega_x(|\pi^n|)^p$, where
$\omega_x(h)=\sup_{|t-s|\leq h}|x(t)-x(s)|$ is the modulus of continuity of $x$.
We use below the completed cell convention for sums over partitions, but these remarks entail that
other conventions lead to the same limits.

\begin{definition}[$p$-th variation along $\pi$]\label{def:p-var}
We write $x\in V_p(\pi;[0,T])$ (or $x\in V_p(\pi)$ if the context is clear) if the measures
\[
 \mu_n^{(p)}=\sum_{I=[u,v]\in\pi^n}|\Delta_Ix|^p\delta_v
\]
converge weakly to a finite measure $\mu^{(p)}$ without atoms.
We define $[x]^{(p)}_\pi(t)=\mu^{(p)}([0,t])$.
\end{definition}
By \cite[Lemma 1.3]{perkowski2019}, 
Definition~\ref{def:p-var} is equivalent to uniform convergence of
$A_n^{(p)}$ to a continuous nondecreasing function $A$ with $A(0)=0$.
Pointwise convergence to such a limit $A$ also suffices.
This is the concept of $p$-th variation along partitions used in
pathwise calculus \cite{follmer1981,chiu2018,perkowski2019}. It is different from the Wiener--Young
$p$-variation, which is a supremum over the set $\Pi([0,T])$ of all finite partitions:
$${\cal V}_p(x)=\sup_{\tau\in\Pi([0,T])}\sum_{I\in\tau}|\Delta_I x|^p.$$
In many examples considered below (e.g. fractional Brownian motion), ${\cal V}_p(x)=\infty$ while $[x]_\pi^{(p)}<\infty :$ finite \(p\)-th variation along $\pi$ does not imply finite  $p$-variation.
\subsection{Variation and roughness indices}
\begin{definition}\label{def:index}
The variation index and roughness index along $\pi$ are
\begin{equation}\label{def.H-hat}
 p^\pi(x)=\inf\{p\geq1:\limsup_n S_n(p)=0\},
 \qquad H^\pi(x)=1/p^\pi(x),
\end{equation}
with the convention $\inf\emptyset=\infty$ and $1/\infty=0$. We also define the lower variation index
\[
 r^\pi(x)=\inf\{p\geq1:\liminf_n S_n(p)=0\}.
\]
Thus $1\leq r^\pi(x)\leq p^\pi(x)\leq\infty$.
\end{definition}
The index is defined even for a constant path, when it equals one. Our
estimation results concern a nondegenerate exponent $p_0>1$.
For reference, we also define the 'classical' variation index based on $q$-variation \cite{dudley2011}:
\begin{equation}
p_{\mathrm{var}}(x)=\inf\{q\ge1:{\cal V}_q(x)<\infty\}.
\end{equation}
\begin{proposition}\label{prop:index}
Let $M_n=\max_{I\in\pi^n}|\Delta_Ix|$. Then $M_n\to0$, and for all
$1\leq q<r$,
\begin{equation}\label{eq:max-increment}
 S_n(r)\leq M_n^{r-q}S_n(q).
\end{equation}
Consequently the following conclusions hold.
\begin{enumerate}
\item If $0<\limsup_nS_n(p_0)<\infty$, then $p^\pi(x)=p_0$.
\item If $0<\liminf_nS_n(p_0)\leq\limsup_nS_n(p_0)<\infty$, then
\begin{equation}\label{eq:threshold}
 S_n(q)\longrightarrow
 \begin{cases}0,&q>p_0,\\+\infty,&1\leq q<p_0,\end{cases}
 \qquad r^\pi(x)=p^\pi(x)=p_0.
\end{equation}
\item If $x\in C^\alpha$, $0<\alpha\leq1$, then
$p^\pi(x)\leq1/\alpha$, so $H^\pi(x)\geq\alpha$.
\item More generally, $r^\pi(x)=p^\pi(x)=p_0<\infty$ implies the
terminal-time limits in \eqref{eq:threshold}.
\end{enumerate}
\end{proposition}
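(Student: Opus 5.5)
The plan is to derive everything from the elementary inequality \eqref{eq:max-increment} together with $M_n\to0$. Since $x$ is continuous on the compact interval $[0,T]$, it is uniformly continuous. Hence $M_n\le\omega_x(|\pi^n|)\to0$ because the mesh vanishes. For \eqref{eq:max-increment}, write $|\Delta_Ix|^r=|\Delta_Ix|^{r-q}|\Delta_Ix|^q\le M_n^{r-q}|\Delta_Ix|^q$ and sum over $I\in\pi^n$. A companion inequality for $q<p_0$, obtained by reversing the roles, is $S_n(p_0)\le M_n^{p_0-q}S_n(q)$, i.e. $S_n(q)\ge M_n^{-(p_0-q)}S_n(p_0)$. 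These two inequalities drive all four items.

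\textbf{Item 1.} Suppose $L:=\limsup_nS_n(p_0)\in(0,\infty)$. For $q>p_0$ we have $S_n(q)\le M_n^{q-p_0}S_n(p_0)$. Since $S_n(p_0)$ is eventually bounded by $2L$ and $M_n\to0$, it follows that $S_n(q)\to0$, so $p^\pi(x)\le p_0$. For $q<p_0$ with $q\ge1$, use $S_n(p_0)\le M_n^{p_0-q}S_n(q)$. Along a subsequence with $S_n(p_0)\to L>0$, this forces $S_n(q)\to\infty$, so $\limsup_nS_n(q)=\infty\neq0$. Also $\limsup_nS_n(p_0)>0$ by hypothesis. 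Hence no $q\le p_0$ belongs to the set in \eqref{def.H-hat}, and $p^\pi(x)=p_0$. One small point needs care: the set $\{p:\limsup S_n(p)=0\}$ is upward closed, which follows from the same inequality. This ensures the infimum is indeed $p_0$ and not something smaller.

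\textbf{Item 2.} Now assume additionally $\liminf_nS_n(p_0)=:\ell>0$. The argument for $q<p_0$ then works along the full sequence: eventually $S_n(p_0)\ge\ell/2$, so $S_n(q)\ge(\ell/2)M_n^{-(p_0-q)}\to\infty$. The case $q>p_0$ is as in item 1. Consequently $\liminf_n S_n(q)=\infty$ for $q<p_0$ and $\liminf_nS_n(p_0)>0$, which gives $r^\pi(x)\ge p_0$. Combined with $r^\pi\le p^\pi=p_0$, we get equality.

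\textbf{Item 3.} If $|x(t)-x(s)|\le C|t-s|^\alpha$, then for $p>1/\alpha$,
\[
S_n(p)\le C^p\sum_{I\in\pi^n}|I|^{\alpha p}\le C^p|\pi^n|^{\alpha p-1}\sum_I|I| = C^pT|\pi^n|^{\alpha p-1}\to0 .
\]
Thus every $p>1/\alpha$ lies in the set, so $p^\pi(x)\le1/\alpha$ and $H^\pi(x)\ge\alpha$. The case $1/\alpha\le 1$ is trivial, since then $\alpha=1$ and $p\ge1$ anyway.

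\textbf{Item 4.} This is the step needing the most care, since no bounds at $p_0$ itself are assumed. Take $q>p_0=p^\pi(x)$. By definition of the infimum there is $p'\in[p_0,q)$ with $\limsup_nS_n(p')=0$. Then $S_n(q)\le M_n^{q-p'}S_n(p')\to0$, or simply $S_n(q)\to0$ directly by upward closedness. Now take $1\le q<p_0=r^\pi(x)$. I would argue by contradiction: suppose $\liminf_nS_n(q)<\infty$. Then along a subsequence $S_{n_k}(q)$ stays bounded. For any $q'\in(q,p_0)$, the inequality gives $S_{n_k}(q')\le M_{n_k}^{q'-q}S_{n_k}(q)\to0$, so $\liminf_nS_n(q')=0$. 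This yields $r^\pi(x)\le q'<p_0$, a contradiction. Hence $\liminf_nS_n(q)=\infty$, i.e. $S_n(q)\to+\infty$. The only subtlety is the requirement $q<p_0$ strictly, so that an intermediate $q'$ exists; finiteness of $p_0$ ensures the statement is meaningful.
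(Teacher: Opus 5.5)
Your proof is correct and follows essentially the same route as the paper: the term-by-term inequality \eqref{eq:max-increment} with $M_n\to0$, a subsequence bounded below for item 1, the full sequence for item 2, the mesh estimate for item 3, and, for item 4, upward closedness together with the contradiction via an intermediate exponent. In item 1 the upward-closedness remark is not actually needed, since you already show directly that every $q>p_0$ lies in the defining set and no $q\le p_0$ does.
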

\begin{proof}
Inequality~\eqref{eq:max-increment} follows term by term; continuity gives
$M_n\leq\omega_x(|\pi^n|)\to0$. If the critical sums  are bounded above,
all higher sums tend to zero. If their limsup is positive, a subsequence is
bounded below by a positive constant, and \eqref{eq:max-increment} forces
every lower-power sum to diverge on that subsequence. This proves (1).
A positive liminf gives divergence along the full sequence, proving (2).
For (3), if $|\Delta_Ix|\leq C|I|^\alpha$ and $q>1/\alpha$, then
$S_n(q)\leq C^qT|\pi^n|^{\alpha q-1}\to0$.
Finally, the sets defining the two indices are upward closed by
\eqref{eq:max-increment}, so $q>p^\pi(x)$ implies $S_n(q)\to0$.
If $q<r^\pi(x)$ and $\liminf_nS_n(q)<\infty$, choose
$q<r<r^\pi(x)$. Along a subsequence where $S_n(q)$ is bounded,
\eqref{eq:max-increment} gives $S_n(r)\to0$, contradicting the definition
of $r^\pi(x)$. This proves (4).
\end{proof}

Note that the terminal-time divergence in \eqref{eq:threshold} does not imply divergence on every subinterval. Likewise, equality of the two indices $r^\pi$ and $p^\pi$ does not imply existence of critical variation. The dyadic threshold in \eqref{eq:threshold}, when $p_0>1$, is exactly the (Hurst) roughness-exponent property introduced by Han and Schied \cite{Schied2022}.

\section{Normalized power variation along refining partitions}\label{sec:estimator}
\subsection{Definition}
Let $\pi$ be a refining sequence of partitions of $[0,T]$. Take a coarse level $K$ and a fine
level $L>K$. In a coarse interval $I\in\pi^K$ let
\begin{equation}\label{eq:block-denominator}
 D_{L,I}(q):=\sum_{\substack{J\in\pi^L\\J\subseteq I}}|\Delta_Jx|^q.
\end{equation}
We call the coarse intervals $I\in\pi^K$ "blocks". 
When every block contains at least one nonzero fine increment,
$D_{L,I}(q)>0$ for every $q>0$. We define the {\it normalized variation statistic}   as
\begin{equation}\label{statistic.W}
 W(L,K,\pi,q,t,x):=
 \sum_{\substack{I=[u,v]\in\pi^K\\v\leq t}}
 |I|\frac{|\Delta_Ix|^q}{D_{L,I}(q)}.
\end{equation}
For a given sample, it is continuous in $q>0$. It is invariant under an affine transformation of
$x$.

\begin{definition}[Roughness estimator]\label{def:estimator}
Fix a parameter interval $P=[p_-,p_+]\subset(1,\infty)$.\\
 A variation index estimator (in $P$) is defined as a solution of 
\begin{equation}
    W(L,K,\pi,q,T,x)= T\qquad q\in P=[p_-,p_+]\label{eq.estimator}
\end{equation}
whenever all denominators are positive.  We denote the  set of solutions of \eqref{eq.estimator} by
\[
 \mathcal Z_{L,K}(x;P)=\{q\in P:W(L,K,\pi,q,T,x)=T\}
\]
and by $ \widehat p^\pi_{L,K}(x)\in\mathcal Z_{L,K}(x;P)$ a generic element of this set.
We call 
\begin{equation}\label{eq.RoughnessEstimator}
 \widehat H^\pi_{L,K}(x)=\frac{1}{\widehat p^\pi_{L,K}(x)}\qquad \widehat p^\pi_{L,K}(x)\in\mathcal Z_{L,K}(x;P)
\end{equation}
a {\it roughness estimator} for $x$.
\end{definition}
Equation \ref{eq.estimator} may have several roots.
The variation-index estimator may be defined as the smallest element  $\widehat p^\pi_{L,K}(x)$   of $\mathcal Z_{L,K}(x;P)$ or computed using
bisection when the endpoint values bracket $T$. 
The consistency assertions below apply to {\it every} root in $P$:
they do not depend on uniqueness or on the particular choice of root.

\begin{example}[Absence of a root]\label{ex:nonmonotone}
Let $\pi^K=\{0,T\}$ and $\pi^L=\{0,T/2,T\}$ and two fine increments $1$ and $-1/2$. Then \eqref{statistic.W} gives $W(q)/T=(1/2)^q/(1+(1/2)^q)=1/(2^q+1)$.
This is strictly decreasing in $q$ and  $W(q)\leq T/2$ for  $q>1$. So $W(q)=T$ has no positive solution. Thus the statistic need not be increasing, and the estimating equation need not have a solution.
\end{example}

\subsection{Pathwise consistency from local bounds}
Given a sequence $L=L(K)>K$, define
\[
\delta_K=|\pi^{L(K)}|,\quad
 W_K(q)=W(L(K),K,\pi,q,T,x).
\]
We make the following assumption on the partition sequence and the path $x$:
\begin{assumption}\label{ass:local} There exist  positive constants
$B, c_0,C_0, c_1,C_1$ such that
\begin{enumerate}
\item Balanced partitions: $\exists B>0,\quad |\pi^n|/\underline{\pi^n}\leq B$ for $n$ sufficiently large.
\item H\"older regularity: $x\in C^{H-}([0,T],\R)$ for some $0<H<1$. \\We set $p_0=1/H>1$. 
\item  $c_0\leq S_K(p_0)\leq C_0$ for $K$ sufficiently large;
\item For $K$ sufficiently large and for every $I\in\pi^K$,
\begin{equation}\label{eq:local-bounds}
 c_1|I|\leq D_{L(K),I}(p_0)\leq C_1|I|.
\end{equation}
\end{enumerate}
\end{assumption}
Assumption \ref{ass:local}.3 is a nondegeneracy assumption for the coarse critical power sums and 
Assumption \ref{ass:local}.4 is a uniform local critical-variation bound.
These are joint conditions on the path and the observation grid. H\"older regularity does not imply properties \ref{ass:local}.3 or \ref{ass:local}.4.
\begin{theorem}[Consistency and  error bound]\label{thm:consistency}
Denote
\[
 N_K=N(\pi^K),\qquad
M_{L(K)}=\max_{J\in\pi^{L(K)}}|\Delta_Jx|,\qquad
  \Lambda_K=M_{L(K)}N_K^{1/p_0}.
\]
Suppose Assumption~\ref{ass:local} holds, $p_-<p_0<p_+$, and
\begin{equation}\label{eq:power-separation} 
  \delta_K\le C|\pi^K|^a
  \qquad\text{for some constants }C>0\text{ and }a>1
  \text{ and  sufficiently large }K.
\end{equation}
Then $\Lambda_K\to0$, and for every
$0<\varepsilon<\min\{p_0-p_-,p_+-p_0\}$,
\begin{align}
  \sup_{q\in[p_-,p_0-\varepsilon]}W_K(q)&\longrightarrow 0,
  \label{eq:uniform-below}\\
\inf_{q\in[p_0+\varepsilon,p_+]}W_K(q)&\longrightarrow+\infty.
  \label{eq:uniform-above}
\end{align}
For $K$ sufficiently large, all fine denominators are positive,
$  W_K(p_-)<T<W_K(p_+)$ and
\begin{equation}\label{eq:all-roots}
  \emptyset\ne\mathcal Z_{L(K),K}(x;P)\subset(p_-,p_+),
  \qquad
  \sup_{q\in\mathcal Z_{L(K),K}(x;P)}|q-p_0|\longrightarrow0.
\end{equation}
There exist constants $C_2,C_3>0$, independent of $K$,
such that for $K$ sufficiently large 
\begin{equation}\label{eq:quantitative-root-bound}
  \sup_{q\in\mathcal Z_{L(K),K}(x;P)}|q-p_0|
  \le\frac{C_2}{\log(1/\Lambda_K)}
  \le\frac{C_3}{\log(1/|\pi^K|)}.
\end{equation}
In particular, every solution $\widehat p^\pi_{L(K),K}(x)\in \mathcal Z_{L(K),K}(x;P)$  satisfies
\[
  \widehat p^\pi_{L(K),K}(x)\longrightarrow p^\pi(x)=p_0,
  \qquad
  \widehat H^\pi_{L(K),K}(x)\longrightarrow H^\pi(x)=H.
\]
Moreover, $p_{\mathrm{var}}(x)=p_0$.
\end{theorem}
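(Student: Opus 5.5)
We sandwich $W_K(q)$ between explicit powers of $\Lambda_K$ times the critical quantity $W_K(p_0)$, which is bounded above and below by the local bounds. For $I\in\pi^K$ write $q=p_0+s$.

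\emph{Numerator.} Since $|\Delta_Ix|^{q}=|\Delta_Ix|^{p_0}|\Delta_Ix|^{s}$, summing over blocks gives
\[
W_K(q)\asymp\sum_I |\Delta_Ix|^{p_0}\frac{|\Delta_I x|^s\,|I|}{D_{L,I}(q)}.
\]

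\emph{Denominator, $s>0$.} By \eqref{eq:max-increment} applied inside a block, $D_{L,I}(q)\le M_{L}^{s}D_{L,I}(p_0)\le C_1M_L^s|I|$.

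\emph{Denominator, $s<0$.} Using $|\Delta_Jx|^{q}=|\Delta_Jx|^{p_0}|\Delta_Jx|^{s}\ge |\Delta_J x|^{p_0}M_L^{s}$, we get $D_{L,I}(q)\ge M_L^{s}c_1|I|$. In particular all denominators are positive, because $D_{L,I}(p_0)\ge c_1|I|>0$ forces a nonzero fine increment.

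\emph{Coarse increments.} We need a two-sided control of $|\Delta_Ix|^s$ weighted by $|\Delta_Ix|^{p_0}$. For $s>0$, Hölder's inequality (or the trivial bound $\sum_I a_I^{q}\ge N_K^{1-q/p_0}(\sum_I a_I^{p_0})^{q/p_0}$ by the power-mean inequality) gives
\[
S_K(q)\ge N_K^{-s/p_0}c_0^{q/p_0}.
\]
For $s<0$, similarly $S_K(q)\le N_K^{-s/p_0}C_0^{q/p_0}$. Combining with the balance condition $|I|\asymp T/N_K$:
\[
W_K(q)\ge \frac{c}{C_1}\,M_L^{-s}N_K^{-s/p_0}=c'\Lambda_K^{-s}\quad(s>0),\qquad W_K(q)\le C'\Lambda_K^{|s|}\quad(s<0),
\]
with constants uniform for $q\in P$.

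\emph{$\Lambda_K\to0$.} This is the key step and the main obstacle. Hölder regularity gives $M_L\le C_\eta\delta_K^{H-\eta}$ for any $\eta>0$. Balance gives $N_K\le BT/|\pi^K|$. Then \eqref{eq:power-separation} yields
\[
\Lambda_K\lesssim |\pi^K|^{a(H-\eta)-H},
\]
and choosing $\eta$ small with $a(H-\eta)>H$ (possible since $a>1$) gives $\Lambda_K\le C|\pi^K|^{\beta}$ with $\beta>0$. Hence $\log(1/\Lambda_K)\ge\beta\log(1/|\pi^K|)-C$, which is the second inequality in \eqref{eq:quantitative-root-bound}.

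\emph{Uniform limits and roots.} The bounds above give \eqref{eq:uniform-below} and \eqref{eq:uniform-above} directly, since $\Lambda_K^{\pm\varepsilon}$ tends to $0$ or $\infty$. They also give $W_K(p_-)<T<W_K(p_+)$ eventually, and continuity in $q$ together with the intermediate value theorem gives a root in $(p_-,p_+)$. Any root $q$ must satisfy $T\ge c'\Lambda_K^{-(q-p_0)}$ if $q>p_0$, and $T\le C'\Lambda_K^{p_0-q}$ if $q<p_0$. Taking logarithms yields
\[
|q-p_0|\le \frac{\max(\log(T/c'),\log(C'/T))_+}{\log(1/\Lambda_K)},
\]
which is \eqref{eq:quantitative-root-bound}.

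\emph{Identification of the indices.} Proposition~\ref{prop:index}(1) with Assumption~\ref{ass:local}.3 gives $p^\pi(x)=p_0$. For $p_{\mathrm{var}}$, note that ${\cal V}_q(x)\ge S_K(q)\to\infty$ for $q<p_0$ by Proposition~\ref{prop:index}. Conversely, $x\in C^{(1/q')}$ for $q'>p_0$ with $1/q'<H$, so ${\cal V}_{q'}<\infty$ for every $q'>p_0$, since Hölder-$\alpha$ paths have finite $1/\alpha$-variation. Hence $p_{\mathrm{var}}(x)=p_0$.
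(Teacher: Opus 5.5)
Your proposal is correct and follows the paper's proof essentially step for step: the comparison of $D_{L(K),I}(q)$ with $M_{L(K)}^{q-p_0}D_{L(K),I}(p_0)$, the power-mean bounds on $S_K(q)$, the estimate $\Lambda_K\lesssim|\pi^K|^{a\beta-H}$ with $\beta\in(H/a,H)$, and taking logarithms at a root. Two small remarks: the factor $|I|$ cancels exactly, so your appeal to balance when combining the bounds is unnecessary; and your closing argument for $p_{\mathrm{var}}(x)=p_0$ (divergence of $S_K(q)$ for $q<p_0$, plus finite $1/\alpha$-variation of $C^\alpha$ paths) is a useful addition, since the paper's proof states this identity but never verifies it.
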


\begin{proof} All estimates below are for sufficiently large $K$ so that the bounds in Assumption~\ref{ass:local} hold.  The local lower bound implies
$D_{L(K),I}(p_0)>0$ for every coarse interval $I$. Each such interval
therefore contains a nonzero fine increment, so all its denominators
$D_{L(K),I}(q)$ are positive for $q>0$. In particular, $M_{L(K)}>0$.
The balanced partition property gives
\[
  N_K\le\frac{BT}{|\pi^K|}.
\]
Fix $\beta\in(H/a,H)$. Since $x\in C^\beta$, there is a constant
$C_\beta$ such that $M_{L(K)}\le C_\beta\delta_K^\beta$.
Using $1/p_0=H$ and \eqref{eq:power-separation}, we obtain
\begin{equation}\label{eq:scale-factor-bound}
  0<\Lambda_K
  \le C_\beta(BT)^H\delta_K^\beta |\pi^K|^{-H}
  \le C_4 |\pi^K|^{a\beta-H}\longrightarrow0.
\end{equation}
Thus $0<\Lambda_K<1$ for all sufficiently large $K$.

H\"older's inequality applied to the $N_K$ coarse increments yields
\begin{equation}\label{eq:power-mean}
\begin{aligned}
  S_K(q)&\le N_K^{1-q/p_0}S_K(p_0)^{q/p_0},
       &&p_-\le q<p_0,\\
  S_K(q)&\ge N_K^{1-q/p_0}S_K(p_0)^{q/p_0},
       &&p_0<q\le p_+.
\end{aligned}
\end{equation}
The second inequality follows by applying the first norm comparison
with the roles of $q$ and $p_0$ interchanged and rearranging.
For $q<p_0$, comparison of each nonzero fine increment gives
\[
  D_{L(K),I}(q)
  \ge M_{L(K)}^{q-p_0}D_{L(K),I}(p_0).
\]
Zero increments contribute zero to both sums and can be omitted.
The local lower critical bound, \eqref{eq:power-mean}, and the coarse
upper critical bound imply
\begin{equation}\label{eq:below-bound}
\begin{aligned}
  W_K(q)
  \le c_1^{-1}M_{L(K)}^{p_0-q}S_K(q)
  \le c_1^{-1}C_0^{q/p_0}
       M_{L(K)}^{p_0-q}N_K^{1-q/p_0}
   =c_1^{-1}C_0^{q/p_0}\Lambda_K^{p_0-q}.
\end{aligned}
\end{equation}
For $q>p_0$, the fine-increment comparison reverses:
\[
  D_{L(K),I}(q)
  \le M_{L(K)}^{q-p_0}D_{L(K),I}(p_0).
\]
Using the local upper critical bound and the coarse lower critical
bound gives
\begin{equation}\label{eq:above-bound}
\begin{aligned}
  W_K(q)
  \ge C_1^{-1}M_{L(K)}^{p_0-q}S_K(q) \ge C_1^{-1}c_0^{q/p_0}
       M_{L(K)}^{p_0-q}N_K^{1-q/p_0}
   =C_1^{-1}c_0^{q/p_0}\Lambda_K^{p_0-q}.
\end{aligned}
\end{equation}
Since $P$ is compact, choose constants $C_*>T>c_*>0$, independent
of $K$ and $q$, such that
\[
  C_*\ge\max_{q\in P}c_1^{-1}C_0^{q/p_0},\qquad
  c_*\le\min_{q\in P}C_1^{-1}c_0^{q/p_0}.
\]
For any $\varepsilon$ in the stated range, it follows that
\[
  \sup_{q\in[p_-,p_0-\varepsilon]}W_K(q)
       \le C_*\Lambda_K^\varepsilon\longrightarrow0,
  \qquad
  \inf_{q\in[p_0+\varepsilon,p_+]}W_K(q)
       \ge c_*\Lambda_K^{-\varepsilon}\longrightarrow+\infty.
\]
This proves \eqref{eq:uniform-below}--\eqref{eq:uniform-above}.
In particular, the endpoint values strictly bracket $T$ for all
sufficiently large $K$. Continuity of $W_K$ in $q$ gives a root in
$(p_-,p_+)$, and the separation estimates exclude every root outside
$(p_0-\varepsilon,p_0+\varepsilon)$ for sufficiently large $K$.
This proves \eqref{eq:all-roots}.

At a root $q<p_0$, the bound $T\le C_*\Lambda_K^{p_0-q}$ gives
\[
  p_0-q\le\frac{\log(C_*/T)}{\log(1/\Lambda_K)}.
\]
At a root $q>p_0$, the bound $T\ge c_*\Lambda_K^{p_0-q}$ gives
\[
  q-p_0\le\frac{\log(T/c_*)}{\log(1/\Lambda_K)}.
\]
A root at $p_0$ satisfies either error bound trivially. Taking
$C_2=\max\{\log(C_*/T),\log(T/c_*)\}$ proves the first inequality
in \eqref{eq:quantitative-root-bound}. Equation~\eqref{eq:scale-factor-bound}
implies
\[
  \log(1/\Lambda_K)
  \ge(a\beta-H)\log(1/|\pi^K|)-\log C_4
  \ge\tfrac12(a\beta-H)\log(1/|\pi^K|)
\]
for all sufficiently large $K$, proving the second inequality.

Assumption~\ref{ass:local}(3) and Proposition~\ref{prop:index}(2)
give $p^\pi(x)=p_0$. For $q\in P$,
\[
  \left|\frac1q-\frac1{p_0}\right|
  =\frac{|q-p_0|}{q p_0}
  \le\frac{|q-p_0|}{p_-p_0},
\]
which proves the assertions for the reciprocal estimator.

\end{proof}

\begin{corollary}[Exact H\"older regularity]\label{cor:exact-holder}
Suppose Assumption~\ref{ass:local} holds with $x\in C^H$
in place of $x\in C^{H-}$, and $p_-<p_0<p_+$.
If $\delta_K/|\pi^K|\to0$, the consistency result of
Theorem~\ref{thm:consistency} holds
 and, for $K$ sufficiently large,
\begin{equation}\label{eq:log-rate}
  \sup_{q\in\mathcal Z_{L(K),K}(x;P)}|q-p_0|
  \le\frac{C}{\log(|\pi^K|/\delta_K)}.
\end{equation}
for some constant
$C>0$.
\end{corollary}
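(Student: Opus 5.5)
The plan is to rerun the proof of Theorem~\ref{thm:consistency} almost verbatim. The only step that changes is the bound \eqref{eq:scale-factor-bound} on the scale factor $\Lambda_K$. There the separation condition \eqref{eq:power-separation} and the loss $\beta<H$ were used only to force $\Lambda_K\to0$ and to obtain $\log(1/\Lambda_K)\gtrsim\log(1/|\pi^K|)$. With exact H\"older regularity $x\in C^H$ we can take $\beta=H$. The balanced property gives $N_K\le BT/|\pi^K|$, and therefore
\[
  0<\Lambda_K=M_{L(K)}N_K^{H}\le C_H\delta_K^H(BT)^H|\pi^K|^{-H}
  =C_5\Bigl(\frac{\delta_K}{|\pi^K|}\Bigr)^{H}.
\]
The hypothesis $\delta_K/|\pi^K|\to0$ then gives $\Lambda_K\to0$, and in particular $0<\Lambda_K<1$ for large $K$.

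Nothing else in the proof of Theorem~\ref{thm:consistency} used \eqref{eq:power-separation}. The positivity of the denominators comes from Assumption~\ref{ass:local}.4. The power-mean comparison \eqref{eq:power-mean} is H\"older's inequality. The bounds \eqref{eq:below-bound}--\eqref{eq:above-bound} follow from the fine-increment comparison together with Assumption~\ref{ass:local}.3--4. The resulting estimates $\sup_{[p_-,p_0-\varepsilon]}W_K\le C_*\Lambda_K^{\varepsilon}$ and $\inf_{[p_0+\varepsilon,p_+]}W_K\ge c_*\Lambda_K^{-\varepsilon}$ hold as before. I will check that the statements $p^\pi(x)=p_0$ (Proposition~\ref{prop:index}(2)) and $p_{\mathrm{var}}(x)=p_0$ also do not depend on \eqref{eq:power-separation}. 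The identity $p_{\mathrm{var}}(x)=p_0$ needs $x\in C^{H-}$, which is implied by $x\in C^H$, together with the lower bound from the coarse sums. Granting this, \eqref{eq:uniform-below}, \eqref{eq:uniform-above}, the bracketing $W_K(p_-)<T<W_K(p_+)$ and \eqref{eq:all-roots} follow exactly as in the theorem.

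For the rate, the root argument of the theorem gives
\[
  \sup_{q\in\mathcal Z}|q-p_0|\le\frac{C_2}{\log(1/\Lambda_K)}.
\]
From the display above,
\[
  \log(1/\Lambda_K)\ge H\log(|\pi^K|/\delta_K)-\log C_5 .
\]
Since $\log(|\pi^K|/\delta_K)\to\infty$, for large $K$ this is at least $\tfrac H2\log(|\pi^K|/\delta_K)$. That yields \eqref{eq:log-rate} with $C=2C_2/H$.

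I expect no serious obstacle. The one point needing care is the absorption of $\log C_5$: if $C_5\le1$ it can be dropped outright, and otherwise it is absorbed only for large $K$, using $\delta_K/|\pi^K|\to0$.
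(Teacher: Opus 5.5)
Your proposal is correct and follows the paper's proof essentially step for step. You bound $\Lambda_K\le C_5(\delta_K/|\pi^K|)^H$ using exact H\"older regularity and balance, observe that \eqref{eq:below-bound}--\eqref{eq:above-bound} and the root arguments use only the critical power-sum bounds and $\Lambda_K\to0$, and absorb $\log C_5$ for large $K$ to get $\log(1/\Lambda_K)\ge\tfrac H2\log(|\pi^K|/\delta_K)$.
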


\begin{proof}
With $|\pi^K|,N_K$ and $\Lambda_K$ as in Theorem~\ref{thm:consistency},
exact H\"older regularity and balance give
\[
  \Lambda_K\le C_5(\delta_K/|\pi^K|)^H\longrightarrow0.
\]
The comparisons \eqref{eq:below-bound} and \eqref{eq:above-bound}
use only the coarse and local critical power-sum bounds. Thus the
root-existence and logarithmic-error arguments in the proof of
Theorem~\ref{thm:consistency} apply whenever $\Lambda_K\to0$,
without requiring \eqref{eq:power-separation}. Finally,
\[
  \log(1/\Lambda_K)
  \ge H\log(|\pi^K|/\delta_K)-\log C_5
  \ge\tfrac H2\log(|\pi^K|/\delta_K)
\]
for all sufficiently large $K$, which proves \eqref{eq:log-rate}.
The index identifications and the reciprocal bound follow as in
the proof of Theorem~\ref{thm:consistency}.
\end{proof}

\begin{remark}[Consistency does not require convergence of the normalized  or $p$-th variation]\label{rem:no-critical-limit}
Neither result assumes  convergence of the normalized variation $W_K(p_0)\to T$ or existence of $[x]^{(p_0)}_\pi$. 
What the proofs use is only that $W_K(p)$ is small for $p<p_0$ and large for $p>p_0$.
Example~\ref{ex:oscillating} shows that this distinction is relevant.
\end{remark}
In practice the estimation equation \eqref{eq.estimator} is solved numerically i.e. the estimator is an approximate root of \eqref{eq.estimator}. The following result shows that such approximate roots also satisfy the consistency property and  error bounds.
\begin{corollary}[Consistency of approximate roots]
\label{cor:approximate-roots}
Under the hypotheses of Theorem~\ref{thm:consistency}, let
$\eta_K\ge0$ and assume $\widetilde p_K\in P$ satisfies, for $K$ sufficiently large,
\[
\left|\log\frac{W_K(\widetilde p_K)}{T}\right|\le\eta_K.
\]
Then there exists  $C>0$,
independent of $K$, $\eta_K$ and $\widetilde p_K$,
such that for $K$ sufficiently large
\[
|\widetilde p_K-p_0|
\le
\frac{C+\eta_K}{\log(1/\Lambda_K)}.
\]
Consequently, if
$\eta_K=o(\log(1/\Lambda_K))$ then
$\widetilde p_K\to p_0$ and
$\widetilde p_K^{-1}\to H$.
If $(\eta_K)$ is bounded, the error remains
$O(1/\log(1/|\pi^K|))$.
\end{corollary}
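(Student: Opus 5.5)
The plan is to reuse the two-sided comparisons \eqref{eq:below-bound} and \eqref{eq:above-bound} from the proof of Theorem~\ref{thm:consistency}. Those comparisons hold for every $q\in P$ and every sufficiently large $K$, and they never use the root property. With the constants $C_*>T>c_*>0$ chosen there, they read
\[
  W_K(q)\le C_*\Lambda_K^{p_0-q}\quad (p_-\le q\le p_0),\qquad
  W_K(q)\ge c_*\Lambda_K^{p_0-q}\quad (p_0\le q\le p_+).
\]
At $q=p_0$ both bounds apply, and they remain valid with nonstrict inequalities. We also take from that proof the facts that for $K$ large all denominators are positive (so $W_K(\widetilde p_K)>0$ and its logarithm is defined) and that $0<\Lambda_K<1$.

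We then split into cases according to the position of $\widetilde p_K$. In the first case, $\widetilde p_K<p_0$. The hypothesis gives $W_K(\widetilde p_K)\ge Te^{-\eta_K}$, and combining this with the upper bound yields $Te^{-\eta_K}\le C_*\Lambda_K^{p_0-\widetilde p_K}$. Taking logarithms and using $\log(1/\Lambda_K)>0$, we get
\[
  (p_0-\widetilde p_K)\log(1/\Lambda_K)\le \log(C_*/T)+\eta_K .
\]
In the second case, $\widetilde p_K>p_0$. The hypothesis gives $W_K(\widetilde p_K)\le Te^{\eta_K}$, and combining this with the lower bound yields
\[
  (\widetilde p_K-p_0)\log(1/\Lambda_K)\le \log(T/c_*)+\eta_K .
\]
The case $\widetilde p_K=p_0$ is trivial. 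Setting $C=\max\{\log(C_*/T),\log(T/c_*)\}$, which equals the constant $C_2$ of Theorem~\ref{thm:consistency} and depends on neither $K$, $\eta_K$ nor $\widetilde p_K$, gives the claimed bound $|\widetilde p_K-p_0|\le (C+\eta_K)/\log(1/\Lambda_K)$.

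The consequences follow directly. Since $\Lambda_K\to0$, we have $\log(1/\Lambda_K)\to\infty$. If $\eta_K=o(\log(1/\Lambda_K))$, the right-hand side therefore tends to $0$, so $\widetilde p_K\to p_0$. The reciprocal then converges through the estimate $|1/q-1/p_0|\le|q-p_0|/(p_-p_0)$ used at the end of the proof of Theorem~\ref{thm:consistency}. If $\eta_K\le\bar\eta$ is bounded, the numerator is at most $C+\bar\eta$. The inequality $\log(1/\Lambda_K)\ge\tfrac12(a\beta-H)\log(1/|\pi^K|)$, also obtained in that proof, then gives the error bound $O(1/\log(1/|\pi^K|))$.

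There is no real obstacle here. The only point needing care is that the comparison bounds hold for all $q\in P$, not only at exact roots, and that $W_K(\widetilde p_K)>0$ so that its logarithm makes sense. Both of these are already established in the proof of Theorem~\ref{thm:consistency}.
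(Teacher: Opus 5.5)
Your proof is correct and follows the paper's argument essentially verbatim: you reuse the bounds $W_K(q)\le C_*\Lambda_K^{p_0-q}$ for $q<p_0$ and $W_K(q)\ge c_*\Lambda_K^{p_0-q}$ for $q>p_0$ from the proof of Theorem~\ref{thm:consistency}, compare them with $Te^{\mp\eta_K}$ in the two cases, take $C=\max\{\log(C_*/T),\log(T/c_*)\}$, and obtain the consequences from $\Lambda_K\to0$ together with the mesh estimate. One small precision: positive denominators only make $W_K(\widetilde p_K)$ finite, while its positivity comes from a nonzero coarse increment (guaranteed by $S_K(p_0)\ge c_0>0$), or simply from the hypothesis that the logarithm is finite.
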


\begin{proof}
The proof of Theorem~\ref{thm:consistency} gives constants
$0<c_*<T<C^*$ such that, for all sufficiently large $K$,
\[
W_K(q)\le C^*\Lambda_K^{p_0-q}
\quad(q<p_0),\qquad
W_K(q)\ge c_*\Lambda_K^{p_0-q}
\quad(q>p_0),
\]
uniformly for $q\in P$. If $\widetilde p_K<p_0$, then
\[
Te^{-\eta_K}
\le W_K(\widetilde p_K)
\le C^*\Lambda_K^{p_0-\widetilde p_K},
\]
and hence
\[
(p_0-\widetilde p_K)\log(1/\Lambda_K)
\le \log(C^*/T)+\eta_K.
\]
If $\widetilde p_K>p_0$, the analogous comparison gives
\[
(\widetilde p_K-p_0)\log(1/\Lambda_K)
\le \log(T/c_*)+\eta_K.
\]
The case $\widetilde p_K=p_0$ is immediate.
Take $C=\max\{\log(C^*/T),\log(T/c_*)\}$.
The remaining assertions follow from $\Lambda_K\to0$ and
the mesh estimate in Theorem~\ref{thm:consistency}.
\end{proof}
In particular, a fixed relative residual tolerance
\[
\left|\frac{W_K(\widetilde p_K)}{T}-1\right|
\le\varepsilon,\qquad 0<\varepsilon<1,
\]
preserves consistency and the logarithmic error bound:
one may take $\eta_K=-\log(1-\varepsilon)$.
\section{Normalization and consistency under finite \texorpdfstring{$p$}{p}-th variation}\label{sec:normalized-p-var}

\subsection{Normalization by a limiting variation measure}

Theorem \(\ref{thm:consistency}\) requires only two-sided bounds on the critical power sums, not the existence of the  $p$-th variation for the critical value $p=p_0$.
We now assume that the critical variation exists and has a positive continuous density, and show how this justifies the normalization in the estimating equation.
\begin{assumption}\label{ass:density}
For some $p_0>1$, $x\in V_{p_0}(\pi)$  and
\begin{equation}\label{eq:density}
 [x]^{(p_0)}_\pi(t)=\int_0^t g(s)\,\dd s,
 \qquad g\in C([0,T]),\qquad \min_{[0,T]}g>0.
\end{equation}
\end{assumption}
A strictly increasing $p$-th variation  alone would not imply
the last condition.
\begin{definition}[Normalized critical variation]
Under Assumption~\ref{ass:density}, if the functions
\begin{equation}\label{eq.weighted.p.var}
 w^K(x,p_0,\pi)(t)=
 \sum_{\substack{I=[u,v]\in\pi^K\\v\leq t}}
 \frac{|I|\,|\Delta_Ix|^{p_0}}{[x]^{(p_0)}_{\pi}(v)-[x]^{(p_0)}_{\pi}(u)} \to w(x,p_0,\pi)(t)
\end{equation}
 converge uniformly to $w(x,p_0,\pi)(t)$ we call this limit  the normalized $p_0$-th variation of $x$.
\end{definition}
This definition is used at the critical exponent with positive denominators.
Assertions for $p\neq p_0$ refer to the finite statistic
\eqref{statistic.W}.
\begin{proposition}[Normalization]\label{prop:normalization}
Under Assumption~\ref{ass:density}, $w^K(x,p_0,\pi)(t)\to t$ uniformly
on $[0,T]$. Denote $A_n^{(p_0)}$ as in \eqref{eq:power-sums} and
\[
 E_n=\max_{u\in\pi^n}|A_n^{(p_0)}(u)-[x]^{(p_0)}_\pi(u)|.
\]
If $L(K)>K$ is such that
\begin{equation}\label{eq:relative-error}
\frac{E_{L(K)}}{\underline{\pi^K}}\mathop{\longrightarrow}^{K\to\infty}0,
\end{equation}
then all fine denominators are  positive for $K$ large enough and
\begin{equation}\label{eq:critical-normalization}
\sup_{t\in[0,T]}|W(L(K),K,\pi,p_0,t,x)-t|\longrightarrow0.
\end{equation}
For fixed $K$, $W(L,K,\pi,p_0,\cdot,x)\to w^K(x,p_0,\pi)$ uniformly as
$L\to\infty$. Consequently, 
\[
\lim_{K\to\infty}\lim_{L\to\infty}
W(L,K,\pi,p_0,t,x)=t,
\]
uniformly for \(t\in[0,T]\).
\end{proposition}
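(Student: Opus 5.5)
The plan is to reduce all three assertions to two facts. The first is a local Riemann-sum approximation of the density $g$; the second is a uniform comparison between fine power sums over a block and increments of $[x]^{(p_0)}_\pi$. Write $Q=[x]^{(p_0)}_\pi$ and let $0<m\le g\le M$ on $[0,T]$, with $\omega_g$ the modulus of continuity of $g$.

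\emph{Step 1: $w^K(\cdot)\to t$.} For $I=[u,v]\in\pi^K$ the denominator is $Q(v)-Q(u)=\int_I g\ge m|I|>0$. By the mean value theorem for integrals, it equals $g(\xi_I)|I|$ for some $\xi_I\in I$. Hence
\[
w^K(t)=\sum_{v\le t}\frac{|\Delta_Ix|^{p_0}}{g(\xi_I)}.
\]
Replacing $g(\xi_I)$ by $g(v)$ costs at most $m^{-2}\omega_g(|\pi^K|)\,A_K^{(p_0)}(T)$. This error tends to $0$ because $A_K^{(p_0)}(T)\to Q(T)<\infty$ and $\omega_g(|\pi^K|)\to0$. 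The remaining sum is $\int_{[0,t]}g^{-1}\,\dd\mu_K^{(p_0)}$, restricted to $v\le t$, which is exactly $\mu_K^{(p_0)}$ evaluated on $[0,t]$. By the weak convergence $\mu_K^{(p_0)}\to\mu^{(p_0)}$ with atomless limit, and the continuity of $1/g$, this converges to $\int_0^t g^{-1}g\,\dd s=t$. To get uniformity in $t$, I will use Lemma 1.3 of \cite{perkowski2019} as recalled after Definition~\ref{def:p-var}, which gives $A_K^{(p_0)}\to Q$ uniformly. Summation by parts (a Stieltjes integration of $1/g$ against $A_K$ versus $Q$) then controls $\sup_t$. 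An alternative for uniformity is Pólya's argument: the functions are monotone and the limit $t$ is continuous, so pointwise convergence already implies uniform convergence.

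\emph{Step 2: fine denominators.} Refinement gives $D_{L,I}(p_0)=A_L(v)-A_L(u)$ for $I=[u,v]\in\pi^K$, since $u,v\in\pi^L$ and the completed-cell sums telescope. Therefore
\[
|D_{L,I}(p_0)-(Q(v)-Q(u))|\le 2E_L.
\]
Since $Q(v)-Q(u)\ge m|I|\ge m\,\underline{\pi^K}$, assumption \eqref{eq:relative-error} gives
\[
\frac{D_{L(K),I}}{Q(v)-Q(u)}=1+r_I,\qquad \max_I|r_I|\le \frac{2E_{L(K)}}{m\,\underline{\pi^K}}\to0.
\]
In particular all denominators are positive for large $K$. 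Consequently
\[
|W(L(K),K,\pi,p_0,t,x)-w^K(t)|\le \frac{\max|r_I|}{1-\max|r_I|}\,w^K(T),
\]
uniformly in $t$. Since $w^K(T)$ is bounded by Step 1, \eqref{eq:critical-normalization} follows.

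\emph{Step 3: iterated limit for fixed $K$.} With $K$ fixed and $L\to\infty$, we have $E_L\to0$ by uniform convergence, while $\underline{\pi^K}$ is fixed. The same ratio bound therefore gives $W(L,K,\pi,p_0,\cdot,x)\to w^K$ uniformly. Letting $K\to\infty$ and applying Step 1 then yields the double limit.

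The main obstacle I expect is the uniformity in Step 1. The cleanest route is probably the monotone (Dini/Pólya) argument combined with pointwise convergence at each $t$. Pointwise convergence needs care at the boundary cell containing $t$, but that cell contributes at most $\omega_x(|\pi^K|)^{p_0}/(m\,\underline{\pi^K})\cdot|\pi^K|$. This quantity is bounded by $m^{-1}$ times $(|\pi^K|/\underline{\pi^K})\,\omega_x^{p_0}$. If balance is not assumed, I will instead bound the cell's term by $|\Delta_Ix|^{p_0}/\min g$, which is at most $\omega_x(|\pi^K|)^{p_0}/m\to0$, so no balance condition is needed.
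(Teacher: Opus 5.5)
Your proposal is correct and follows the paper's proof essentially step for step. The steps match: the replacement of the block average of $g$ by $g(v)$ at cost $m^{-2}\omega_g(|\pi^K|)S_K(p_0)$; weak convergence of $g^{-1}\mu_K^{(p_0)}$ to Lebesgue measure, upgraded to uniform convergence of cumulative masses (your P\'olya/Dini argument is what the cited Lemma~1.3 provides); and the telescoping bound $|D_{L,I}(p_0)-([x]^{(p_0)}_\pi(v)-[x]^{(p_0)}_\pi(u))|\le 2E_L$ yielding the factor $\eta_K/(1-\eta_K)$. The boundary-cell discussion is unnecessary, because with the completed-cell convention the sum over $v\le t$ is exactly $\int_{[0,t]}g^{-1}\,\dd\mu_K^{(p_0)}$, so pointwise convergence at each $t$ follows directly from weak convergence to an atomless limit.
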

\begin{proof}
Write $m=\min g>0$, $M=\max g$, and $\bar g_I=|I|^{-1}\int_I g$.
Uniform continuity gives
\[
 \left|\bar g_I^{-1}-g(v)^{-1}\right|
 \leq m^{-2}\omega_g(|\pi^K|),\qquad I=[u,v]\in\pi^K.
\]
Therefore \eqref{eq.weighted.p.var} differs uniformly from
$\sum_{v\leq t}g(v)^{-1}|\Delta_Ix|^{p_0}$ by at most
$m^{-2}\omega_g(|\pi^K|)S_K(p_0)\to0$.
Weak convergence of $\mu_K^{(p_0)}$ to $g(t)\dd t$ implies weak convergence
of the positive measures $g^{-1}\mu_K^{(p_0)}$ to Lebesgue measure: test
against a continuous $f$ and use continuity of $f/g$. Their cumulative masses converge uniformly \cite[Lemma 1.3]{perkowski2019}.
This proves the first assertion.

Since coarse endpoints belong to the fine grid,
\[
 D_{L,I}(p_0)=A_L^{(p_0)}(v)-A_L^{(p_0)}(u),\qquad
 |D_{L,I}(p_0)-([x]^{(p_0)}_\pi(v)-[x]^{(p_0)}_\pi(u))|\leq2E_L.
\]
The relative error is at most
$\eta_K=2E_{L(K)}/(m\underline{\pi^K})\to0$.
For large $K$, each denominator lies between
$(1-\eta_K)([x]^{(p_0)}_\pi(v)-[x]^{(p_0)}_\pi(u))$ and $(1+\eta_K)([x]^{(p_0)}_\pi(v)-[x]^{(p_0)}_\pi(u))$.
Term-by-term comparison  gives
\[
 \sup_t|W(L(K),K,\pi,p_0,t,x)-w^K(x,p_0,\pi)(t)|
 \leq\frac{\eta_K}{1-\eta_K}w^K(x,p_0,\pi)(T)\longrightarrow0.
\]
For fixed $K$, the same argument applies because $E_L\to0$ and there are
finitely many positive coarse variation increments.
\end{proof}

\begin{corollary}[Existence of a consistent diagonal]\label{cor:diagonal}
Suppose $\pi$ is balanced, Assumption~\ref{ass:density} holds, and
$x\in C^{1/p_0-}$. There exists an increasing function $L(K)>K$ for which
 \eqref{eq:critical-normalization} and the consistency conclusions of
Theorem~\ref{thm:consistency} hold.
\end{corollary}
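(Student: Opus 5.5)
We need to build $L(K)$ so that two things hold simultaneously: condition \eqref{eq:relative-error} (which gives \eqref{eq:critical-normalization} via Proposition~\ref{prop:normalization}), and the hypotheses of Theorem~\ref{thm:consistency}, namely Assumption~\ref{ass:local} together with the scale separation \eqref{eq:power-separation}. The plan is to first check that Assumption~\ref{ass:density} supplies parts 3 and 4 of Assumption~\ref{ass:local} once \eqref{eq:relative-error} holds. Then we choose $L(K)$ growing fast enough that both \eqref{eq:relative-error} and \eqref{eq:power-separation} are satisfied. Part 1 of Assumption~\ref{ass:local} is the balance hypothesis. Part 2 is $x\in C^{1/p_0-}$; we need $1/p_0<1$, which holds because $p_0>1$.

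\textbf{Choice of $L(K)$.} By Definition~\ref{def:p-var} and \cite[Lemma 1.3]{perkowski2019}, $A^{(p_0)}_n\to[x]^{(p_0)}_\pi$ uniformly, so $E_n\to0$. Also $|\pi^n|\to0$. Since $\underline{\pi^K}>0$ for each fixed $K$, we can pick $L(K)>\max\{K,L(K-1)\}$ recursively so that
\[
E_{L(K)}\le \underline{\pi^K}/K
\qquad\text{and}\qquad
\delta_K=|\pi^{L(K)}|\le |\pi^K|^2 .
\]
This makes $L$ increasing. It also gives \eqref{eq:relative-error} and \eqref{eq:power-separation} with $a=2$, $C=1$. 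Proposition~\ref{prop:normalization} then yields positivity of the fine denominators and \eqref{eq:critical-normalization}.

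\textbf{Verifying Assumption~\ref{ass:local}(3),(4).} For (3), $S_K(p_0)=A^{(p_0)}_K(T)\to[x]^{(p_0)}_\pi(T)=\int_0^Tg\in[mT,MT]$, with $m=\min g>0$ and $M=\max g$. Hence $S_K(p_0)$ eventually lies between $mT/2$ and $2MT$.

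For (4), the proof of Proposition~\ref{prop:normalization} gives, for $I=[u,v]\in\pi^K$,
\[
|D_{L(K),I}(p_0)-\textstyle\int_I g|\le 2E_{L(K)} .
\]
Since $\int_I g\in[m|I|,M|I|]$ and $|I|\ge\underline{\pi^K}$, we get
\[
|D_{L(K),I}(p_0)/|I|-\bar g_I|\le 2E_{L(K)}/\underline{\pi^K}\to0
\]
uniformly in $I$. So for large $K$, $\tfrac m2|I|\le D_{L(K),I}(p_0)\le 2M|I|$, which is \eqref{eq:local-bounds} with $c_1=m/2$ and $C_1=2M$. Theorem~\ref{thm:consistency} then applies for every interval $P$ with $p_-<p_0<p_+$.

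\textbf{Where care is needed.} The step most likely to need attention is the relative (rather than absolute) control of the local denominators. Uniform convergence $E_n\to0$ alone does not control $D_{L,I}(p_0)$ relative to $|I|$, because $|I|$ shrinks with $K$. This is why $L(K)$ must be chosen depending on $\underline{\pi^K}$, and why the positive lower bound $m$ on $g$ is essential. The remaining issue is only bookkeeping: all three requirements (relative error, scale separation, monotonicity of $L$) must be met simultaneously. The recursive choice handles this, since each is a condition that holds for all sufficiently large $L$ at fixed $K$.
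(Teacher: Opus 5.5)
Your proposal is correct and follows the paper's proof: it makes the same recursive choice of $L(K)$ with $E_{L(K)}\le\underline{\pi^K}/K$ and $|\pi^{L(K)}|\le|\pi^K|^2$, and uses the block-error bound from the proof of Proposition~\ref{prop:normalization} to place $D_{L(K),I}(p_0)/|I|$ between $m/2$ and $2M$. It then applies Proposition~\ref{prop:normalization} and Theorem~\ref{thm:consistency}, with you simply writing out the verification of Assumption~\ref{ass:local} in more detail than the paper does.
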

\begin{proof}
Choose $L(K)$ recursively large enough that
$E_{L(K)}\leq\underline{\pi^K}/K$ and
$|\pi^{L(K)}|\leq|\pi^K|^2$. This is possible since $E_n\to0$ and the
mesh tends to zero. Then $S_K(p_0)\to [x]^{(p_0)}_\pi(T)>0$, and the proof of
Proposition~\ref{prop:normalization} bounds every $D_{L(K),I}(p_0)/|I|$
between $m/2$ and $2M$ for all sufficiently large $K$. Applying Proposition~\ref{prop:normalization} and
Theorem~\ref{thm:consistency} yields the result.
\end{proof}
The diagonal here depends on an unknown convergence error. 
Also, a bound $E_L=O(\underline{\pi^K})$ is insufficient for the relative
error in \eqref{eq:relative-error} to vanish.
\begin{remark}[Relation with intrinsic $p$-roughness]\label{rem:intrinsic}
The concept of  $p$-roughness  \cite[Definition~3.1]{Cont2026}
controls, uniformly in time, the error $\Omega_p^A(x;\delta)$, where $A={[x]^{(p)}_\pi}$, of stopped $p$-th power sums over all shifted uniform grids of spacing at most $\delta$. For nested uniform grids, completed and stopped sums agree at grid
endpoints. Hence $E_{L(K)}\leq\Omega_{p_0}^A(x;\delta_K)$.
Under \eqref{eq:density},
$\Omega_{p_0}^A(x;\delta_K)=o(|\pi^K|)$ is therefore sufficient for
Proposition~\ref{prop:normalization}. If also $x\in C^{H-}$ and
\eqref{eq:power-separation} holds, it gives consistency.
For instance, the quantitative bound
$\Omega_{p_0}^A(x;\delta)=O(\delta^\eta(\log(1/\delta))^r)$, with
$\eta>0$ and $r\geq0$, permits $\delta_K\leq C |\pi^K|^a$ whenever
$a>1$ and $a\eta>1$. 
Convergence of $\Omega^A_{p_0}(x;\delta)$ to zero permits a sufficiently fine path-dependent diagonal, as in Corollary \ref{cor:diagonal}, but does not select a specific subsequence such as $L(K)=2K$.
\end{remark}

\subsection{Consistency under $p$-roughness}\label{sec:p-rough-consistency}
The $p$-roughness \cite{Cont2026} allows us to define $p$-th variation intrinsically and obtain invariance across shifts of regular partitions.
Under this property, one can obtain a stronger statement than Theorem \ref{thm:consistency}.
In particular, the coarse partition may be kept fixed while only the fine
partition is refined. We first state the result under a finite $p_0$-th
variation hypothesis along $\pi$, and then give a stronger statement under  
$p$-roughness.

\begin{proposition}[Consistency with a fixed coarse partition]\label{prop:fixed-coarse-consistency}
Let $x\in V_{p_0}(\pi)$ for some $p_0>1$. Fix a coarse level $K_0$ such that
\begin{equation}\label{eq:fixed-coarse-nondegeneracy}
 [x]^{(p_0)}_\pi(v)-[x]^{(p_0)}_\pi(u)>0\quad\text{for every }I=[u,v]\in\pi^{K_0},
 \qquad S_{K_0}(p_0)>0.
\end{equation}
Let $P=[p_-,p_+]\subset(1,\infty)$ with $p_-<p_0<p_+$.
For $L$ large enough, all fine denominators are positive, 
\[
W(L,K_0,\pi,p_-,T,x)<T<W(L,K_0,\pi,p_+,T,x)
\]
and
\begin{equation}\label{eq:fixed-coarse-consistency}
 \sup_{q\in\mathcal Z_{L,K_0}(x;P)}|q-p_0|\longrightarrow0.
\end{equation}
Thus every root selection satisfies
$\widehat p^\pi_{L,K_0}(x)\to p^\pi(x)=p_0$ and
$\widehat H^\pi_{L,K_0}(x)\to1/p_0$.
Moreover, 
there is a constant $C<\infty$, independent of $L$, such that for all
sufficiently large $L$,
\begin{equation}\label{eq:fixed-coarse-rate}
 \sup_{q\in\mathcal Z_{L,K_0}(x;P)}|q-p_0|
 \leq\frac{C}{\log(1/M_L)}\qquad{\rm where}\quad M_L=\max_{J\in\pi^L}|\Delta_Jx|.
\end{equation}
\end{proposition}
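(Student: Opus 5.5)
The plan is to rerun the comparison argument of Theorem~\ref{thm:consistency} with $K=K_0$ held fixed, so that the coarse quantities become constants and only the fine level $L$ moves. Write $\Delta_I[x]=[x]^{(p_0)}_\pi(v)-[x]^{(p_0)}_\pi(u)$ for $I=[u,v]\in\pi^{K_0}$.

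\emph{Step 1: fine denominators.} Since $\pi$ is refining, the endpoints of each coarse block are fine grid points. Hence
\[
D_{L,I}(p_0)=A_L^{(p_0)}(v)-A_L^{(p_0)}(u).
\]
By Definition~\ref{def:p-var} and \cite[Lemma 1.3]{perkowski2019}, $A_L^{(p_0)}\to[x]^{(p_0)}_\pi$ uniformly, so $D_{L,I}(p_0)\to\Delta_I[x]$ for each of the finitely many blocks. By \eqref{eq:fixed-coarse-nondegeneracy}, for $L$ large,
\[
\tfrac12 c\le D_{L,I}(p_0)\le 2C',\qquad c=\min_I\Delta_I[x]>0,\quad C'=\max_I\Delta_I[x].
\]
In particular every block contains a nonzero fine increment, so $D_{L,I}(q)>0$ for all $q>0$ and $M_L>0$. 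Also $M_L\le\omega_x(|\pi^L|)\to0$.

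\emph{Step 2: two-sided bounds in $q$.} As in the proof of Theorem~\ref{thm:consistency}, comparing each nonzero fine increment with $M_L$ gives
\[
D_{L,I}(q)\ge M_L^{q-p_0}D_{L,I}(p_0)\ \ (q<p_0),\qquad D_{L,I}(q)\le M_L^{q-p_0}D_{L,I}(p_0)\ \ (q>p_0).
\]
The numerators $|I|\,|\Delta_Ix|^q$ do not depend on $L$. Let $a(q)=\sum_I|I|\,|\Delta_Ix|^q$. This is continuous in $q$ on the compact set $P$, bounded above, and bounded below by a positive constant because $S_{K_0}(p_0)>0$ forces some $\Delta_Ix\ne0$. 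Therefore, uniformly in $q\in P$,
\[
W(L,K_0,\pi,q,T,x)\le C_*M_L^{p_0-q}\ \ (q<p_0),\qquad W(L,K_0,\pi,q,T,x)\ge c_*M_L^{p_0-q}\ \ (q>p_0),
\]
with constants $0<c_*<T<C_*$ independent of $L$. We may enlarge $C_*$ and shrink $c_*$ freely.

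\emph{Step 3: endpoints, roots and rate.} Since $M_L\to0$, these bounds give $W(p_-)\le C_*M_L^{p_0-p_-}<T$ and $W(p_+)\ge c_*M_L^{p_0-p_+}>T$ for $L$ large. Continuity in $q$ then yields a root in $(p_-,p_+)$. The root-location argument of Theorem~\ref{thm:consistency}, with $\Lambda_K$ replaced by $M_L$, applies to any root $q$:
\[
|q-p_0|\log(1/M_L)\le\max\{\log(C_*/T),\log(T/c_*)\}.
\]
This gives \eqref{eq:fixed-coarse-rate}, and \eqref{eq:fixed-coarse-consistency} follows because $\log(1/M_L)\to\infty$. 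Finally, $p^\pi(x)=p_0$ by Proposition~\ref{prop:index}(2): $S_n(p_0)\to[x]^{(p_0)}_\pi(T)\ge S$-positive, since the blockwise positivity in \eqref{eq:fixed-coarse-nondegeneracy} makes $[x]^{(p_0)}_\pi(T)=\sum_I\Delta_I[x]>0$. The reciprocal bound is handled as before.

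\emph{Main obstacle.} The only delicate point is the uniform positivity of the fine denominators in Step 1. It rests on $\pi$ being refining, so that coarse endpoints lie on the fine grid and blockwise sums telescope, and on uniform convergence of $A_L^{(p_0)}$. No H\"older assumption is needed, because $M_L\to0$ follows from continuity alone.
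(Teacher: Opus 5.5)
Your proposal is correct and follows the paper's proof essentially step for step. Both arguments telescope $D_{L,I}(p_0)=A_L^{(p_0)}(v)-A_L^{(p_0)}(u)$ using refinement and uniform convergence, giving two-sided bounds on the finitely many blocks. Both then compare fine increments with $M_L$ to get $W\le C_*M_L^{p_0-q}$ for $q<p_0$ and $W\ge c_*M_L^{p_0-q}$ for $q>p_0$, where the $L$-independent factor $\sum_I|I|\,|\Delta_Ix|^q$ is bounded away from $0$ and $\infty$. Both finish with bracketing, the intermediate value theorem, the logarithmic root-location bound, and Proposition~\ref{prop:index}(2). The phrase ``$\ge S$-positive'' in your last step is garbled, but your intended justification $[x]^{(p_0)}_\pi(T)=\sum_{I=[u,v]\in\pi^{K_0}}\bigl([x]^{(p_0)}_\pi(v)-[x]^{(p_0)}_\pi(u)\bigr)>0$ is exactly what is needed.
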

\begin{proof}
Since the partitions are refining, for every  coarse interval
$I=[u,v]\in\pi^{K_0}$,
\[
 D_{L,I}(p_0)=A_L^{(p_0)}(v)-A_L^{(p_0)}(u)
 \longrightarrow [x]^{(p_0)}_\pi(v)-[x]^{(p_0)}_\pi(u)>0.
\]
where we use the notation $A_L^{(p_0)}$ as in \eqref{eq:power-sums}.
There are finitely many coarse intervals. Hence constants $0<m<M<\infty$
exist such that, for all sufficiently large $L$,
\[
 m\leq D_{L,I}(p_0)\leq M\qquad\text{for every }I\in\pi^{K_0}.
\]
Every block therefore contains a nonzero fine increment, making
$D_{L,I}(q)>0$ for every $q>0$. Continuity of $x$ and the vanishing fine
mesh imply $M_L\to0$; thus $0<M_L<1$ for $L$ large enough.
The function
\[
 B(q)=\sum_{I\in\pi^{K_0}}|I|\,|\Delta_Ix|^q
\]
is continuous and strictly positive on $P$, because
$S_{K_0}(p_0)>0$ means that at least one coarse increment is nonzero.
Consequently constants $b_-,b_+>0$ exist such that
$b_-\leq B(q)\leq b_+$ for every $q\in P$.
If $q<p_0$, comparison of each nonzero fine increment gives
\[
 D_{L,I}(q)\geq(M_L)^{q-p_0}D_{L,I}(p_0).
\]
Zero increments contribute zero and can be omitted. Summing the resulting
upper bounds for the terms of $W$ yields
\[
 W(L,K_0,\pi,q,T,x)\leq\frac{b_+}{m}(M_L)^{p_0-q}.
\]
For $q>p_0$, the fine-increment comparison reverses and gives
\[
 D_{L,I}(q)\leq(M_L)^{q-p_0}D_{L,I}(p_0),\qquad
 W(L,K_0,\pi,q,T,x)\geq\frac{b_-}{M}(M_L)^{p_0-q}.
\]
For  $0<\varepsilon<\min\{p_0-p_-,p_+-p_0\}$, these bounds imply
\begin{align*}
 \sup_{\substack{q\in P\\q\leq p_0-\varepsilon}}
 W(L,K_0,\pi,q,T,x)&\leq\frac{b_+}{m}(M_L)^\varepsilon\longrightarrow0,\\
 \inf_{\substack{q\in P\\q\geq p_0+\varepsilon}}
 W(L,K_0,\pi,q,T,x)&\geq\frac{b_-}{M}(M_L)^{-\varepsilon}
 \longrightarrow\infty.
\end{align*}
Since $p_0$ is interior to $P$, $T$ falls between the endpoint values for $L$ large. Continuity in $q$ gives a root by the intermediate value theorem,
and the same estimates exclude every root outside any prescribed
neighbourhood of $p_0$. This proves \eqref{eq:fixed-coarse-consistency}.

Choose constants $C_*>T>c_*>0$ with $C_*\geq b_+/m$ and
$c_*\leq b_-/M$. At a root $q<p_0$,
\[
 T\leq C_*(M_L)^{p_0-q}
 \quad\Longrightarrow\quad
 p_0-q\leq\frac{\log(C_*/T)}{\log(1/M_L)}.
\]
At a root $q>p_0$,
\[
 T\geq c_*(M_L)^{p_0-q}
 \quad\Longrightarrow\quad
 q-p_0\leq\frac{\log(T/c_*)}{\log(1/M_L)}.
\]
A root at $p_0$ already satisfies the bound. Taking the larger of the two
numerators proves \eqref{eq:fixed-coarse-rate}.
Finally, $S_L(p_0)\to [x]^{(p_0)}_\pi(T)>0$, so Proposition~\ref{prop:index}(2)
gives $p^\pi(x)=p_0$. Continuity of $q\mapsto1/q$ on $P$ gives the
convergence of the reciprocal estimator.
\end{proof}

\begin{corollary}[Consistency for $p_0$-rough paths]\label{cor:p-rough-consistency} Let $\pi$ be a refining sequence of uniform partitions with vanishing mesh. 
Assume $x$ is $p_0$-rough with $[x]^{(p_0)}_\pi$ is strictly increasing on $[0,T]$. Let
$P=[p_-,p_+]\subset(1,\infty)$ with $p_-<p_0<p_+$.
For every sufficiently large fixed coarse level $K_0$, all conclusions of
Proposition~\ref{prop:fixed-coarse-consistency} hold.
Furthermore, there exists an increasing choice of $L(K)>K$ such that
the root sets are  nonempty and
\[
 \sup_{q\in\mathcal Z_{L(K),K}(x;P)}|q-p_0|\longrightarrow0.
\]
No H\"older regularity or absolute continuity of $[x]^{(p_0)}_\pi$ is required.
\end{corollary}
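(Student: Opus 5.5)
The plan is to derive everything from Proposition~\ref{prop:fixed-coarse-consistency}, so the main task is to check its hypotheses for every large fixed $K_0$. We use only two facts about $p_0$-roughness from \cite{Cont2026}. First, $x\in V_{p_0}(\pi)$ along any refining sequence of uniform partitions with vanishing mesh. Second, the limit $A=[x]^{(p_0)}_\pi$ is continuous, nondecreasing and independent of the grid; by hypothesis it is also strictly increasing.

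\textbf{Step 1: fixed coarse level.} Fix a coarse level $K_0$ and take any $I=[u,v]\in\pi^{K_0}$ with $u<v$. Strict monotonicity gives $A(v)-A(u)>0$, which is the first condition in \eqref{eq:fixed-coarse-nondegeneracy}.

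For the second condition, $S_{K_0}(p_0)>0$, it suffices that $x$ is not constant on $[0,T]$. Suppose all coarse increments of $x$ vanished. Then $S_{K_0}(p_0)=0$, but this alone is not a contradiction. The real requirement is only that some coarse increment is nonzero. Since $S_K(p_0)\to A(T)>0$ by the definition of $V_{p_0}(\pi)$, we have $S_K(p_0)>0$ for all sufficiently large $K$. This is why the statement says ``sufficiently large fixed coarse level''.

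With both conditions in place, Proposition~\ref{prop:fixed-coarse-consistency} applies verbatim. It gives positive denominators, endpoint bracketing, convergence of all roots, and the rate \eqref{eq:fixed-coarse-rate}. Note that its proof used neither H\"older regularity nor a density for $A$. It used only convergence of the block sums $D_{L,I}(p_0)$ to $A(v)-A(u)>0$, which holds because coarse endpoints are fine grid points. This justifies the final sentence of the corollary.

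\textbf{Step 2: diagonal.} For each large $K$, Step 1 gives a threshold $L_0(K)$ with the following property: for $L\ge L_0(K)$ the root set $\mathcal Z_{L,K}(x;P)$ is nonempty and
\[
\sup_{q\in\mathcal Z_{L,K}(x;P)}|q-p_0|\le 1/K.
\]
This follows from \eqref{eq:fixed-coarse-consistency} together with the nonemptiness coming from bracketing. Define recursively
\[
L(K)=\max\{L_0(K),\,L(K-1)+1,\,K+1\}.
\]
This choice is increasing with $L(K)>K$, and it gives nonempty root sets whose supremum error is at most $1/K\to0$.

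\textbf{Main obstacle.} The one delicate point is whether $p_0$-roughness in the sense of \cite{Cont2026} really supplies $x\in V_{p_0}(\pi)$ along the given sequence $\pi$, i.e.\ convergence of the completed-cell power sums at every coarse endpoint. This is where uniformity of the partitions enters. I would handle it by invoking the intrinsic uniform error $\Omega^A_{p_0}(x;\delta)\to0$, as in Remark~\ref{rem:intrinsic}. For nested uniform grids, stopped and completed sums agree at grid points, so
\[
E_L\le\Omega^A_{p_0}(x;|\pi^L|)\to0,
\]
which yields the required convergence of $D_{L,I}(p_0)$ to $A(v)-A(u)$.
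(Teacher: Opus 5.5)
Your proposal is correct and follows the paper's proof essentially step by step: $p_0$-roughness along the nested uniform grids gives $x\in V_{p_0}(\pi)$ with $S_K(p_0)\to A(T)>0$, strict monotonicity gives positive block increments, so Proposition~\ref{prop:fixed-coarse-consistency} applies for every large $K_0$, and the diagonal $L(K)\geq\max\{R_K,L(K-1)+1\}$ is chosen exactly as in the paper. The only blemish is your passing claim that non-constancy of $x$ suffices for $S_{K_0}(p_0)>0$ at a fixed level. That claim is false in general, since $x$ could take the same value at every point of $\pi^{K_0}$, but you do not use it and rely instead on the correct limiting argument.
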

\begin{proof}
The definition of $p_0$-roughness gives convergence of the stopped critical
power sums to $[x]^{(p_0)}_\pi$, uniformly in time, along the uniform grids $\pi^n$.
Thus $x\in V_{p_0}(\pi)$  and
$S_K(p_0)\to [x]^{(p_0)}_\pi(T)>0$. For every sufficiently large $K_0$,
$S_{K_0}(p_0)>0$, while strict increase of $[x]^{(p_0)}_\pi$ gives a positive power-variation increment in every coarse interval. The proposition therefore applies.

For each sufficiently large $K$, its conclusion with $K_0=K$ gives
an integer $R_K>K$ such that for every $L\geq R_K$ the root set is
nonempty and all its elements satisfy $|q-p_0|<1/K$.
Choose $L(K)$ recursively with
$L(K)\geq\max\{R_K,L(K-1)+1\}$. This gives the claimed consistent
diagonal. The finitely many initial levels may be chosen arbitrarily,
subject to $L(K)>K$ and monotonicity.
\end{proof}

For a fixed coarse level satisfying \eqref{eq:fixed-coarse-nondegeneracy},
\[
 W(L,K_0,\pi,p_0,T,x)\mathop{\longrightarrow}^{L\to\infty}
 \sum_{I=[u,v]\in\pi^{K_0}}
 |I|\frac{|\Delta_Ix|^{p_0}}{[x]^{(p_0)}_\pi(v)-[x]^{(p_0)}_\pi(u)}.
\]
The condition $S_{K_0}(p_0)>0$ is essential, as if all coarse increments vanish, then $W(L,K_0,\pi,q,T,x)=0$ whenever it is defined, so the
estimating equation has no root.

\section{Schauder expansions and signed Takagi--Landsberg functions}\label{sec:schauder}
We now examine the consistency of the estimator for a class of functions defined through a Faber--Schauder series.  Faber--Schauder series with prescribed $p$-th variation have been studied in
\cite{schied2016b,das2021,Misura2019}. 
For $t\in [0,1]$, define
\[
 e_{0,0}(t)=\max\{0,\min(t,1-t)\},\qquad
 e_{m,k}(t)=2^{-m/2}e_{0,0}(2^mt-k),\quad0\leq k<2^m.
\]
Fix $H\in(0,1)$ and define
\begin{equation}\label{eq:schauder}
 x(t)=x(0)+bt+
 \sum_{m=0}^{\infty}2^{m(1/2-H)}
       \sum_{k=0}^{2^m-1}\theta_{m,k}e_{m,k}(t),
 \qquad 0<a_-\leq|\theta_{m,k}|\leq a_+<\infty.
\end{equation}
By the standard characterization of H\"older regularity through Faber–Schauder coefficients \cite{Ciesielski1960}, the bound \(\sup_{m,k}|\theta_{m,k}|\leq a_+\) implies \(x\in C^H([0,1],\mathbb R)\).
\subsection{Consistency of the estimator}
\begin{theorem}[Consistency for bounded nondegenerate coefficients]\label{thm:schauder}
Define $x\in C^H([0,1],\mathbb{R})$ as in \eqref{eq:schauder} and $p_0=1/H$. There exists constants
$c,C>0$  such that
\begin{equation}\label{eq:schauder-bounds}
 c|I|\leq D_{L,I}(p_0)\leq C|I|,
 \qquad I\in\Tdy^K,\quad L>K\geq0,
\end{equation}
and $c\leq S_K(p_0)\leq C$ for $K\geq1$.
Consequently $p^{\Tdy}(x)=p_0$, and the estimator is consistent for every
choice $K\to\infty$, $L(K)-K\to\infty$, with $p_0$ interior to $P$.
Its roots satisfy $$|\widehat p^{\Tdy}_{L,K}-p_0|\leq \frac{C_P}{L-K}$$ for
large enough $K$.
\end{theorem}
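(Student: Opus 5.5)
The plan is to compute the dyadic increments of $x$ explicitly from the Schauder expansion, obtain the two-sided local bounds \eqref{eq:schauder-bounds}, and then apply the mechanism of Theorem~\ref{thm:consistency}. For $J=[j2^{-n},(j+1)2^{-n}]\in\Tdy^n$, every $e_{m,k}$ with $m\ge n$ vanishes at both endpoints. Hence $\Delta_Jx$ comes only from $bt$ and the levels $m<n$, and on $J$ each of these is affine.

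The first step is the \emph{one-level recursion}. Let $s_J=2^n\Delta_Jx$ be the slope of the level-$(n-1)$ partial sum on $J$. The two children $J',J''\in\Tdy^{n+1}$ then satisfy
\[
\Delta_{J'}x=\tfrac12\Delta_Jx+\tfrac12\,2^{n(1/2-H)}2^{-n/2}\theta_{n,j},\qquad
\Delta_{J''}x=\tfrac12\Delta_Jx-\tfrac12\,2^{-nH}\theta_{n,j}.
\]
Writing $\Delta_Jx=2^{-nH}y_J$ gives children $2^{-(n+1)H}y_{J'}$, $2^{-(n+1)H}y_{J''}$ with
\[
y_{J',J''}=2^{H-1}(y_J\pm\theta_{n,j}).
\]
Two consequences follow.

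\emph{Upper bound.} From $|y|\le 2^{H-1}(|y|+a_+)$ and $2^{H-1}<1$, the $y$'s stay bounded, $|y|\le Y:=\max(|y_{\text{root}}|,\,2^{H-1}a_+/(1-2^{H-1}))$. This is just the $C^H$ bound.

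\emph{Lower bound.} Since $|\theta|\ge a_-$, at least one of $|y_J+\theta|,|y_J-\theta|$ is at least $a_-$ (their sum is at least $2|\theta|$). So among the two children of any interval, at least one has $|y|\ge 2^{H-1}a_-$.

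The second step gives the bounds on $D_{L,I}(p_0)$ for $I\in\Tdy^K$. Since $p_0H=1$,
\[
D_{L,I}(p_0)=2^{-L}\sum_{J\subseteq I,\,J\in\Tdy^L}|y_J|^{p_0}.
\]
The upper bound is immediate: $D_{L,I}(p_0)\le 2^{L-K}2^{-L}Y^{p_0}=Y^{p_0}|I|$.

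For the lower bound I would apply the child dichotomy at the last level only. Each $J\in\Tdy^{L-1}$ inside $I$ has a child with $|y|^{p_0}\ge(2^{H-1}a_-)^{p_0}$, and there are $2^{L-1-K}$ such $J$. This gives $D_{L,I}(p_0)\ge \tfrac12(2^{H-1}a_-)^{p_0}|I|$ for all $L>K\ge0$.

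The same argument gives $c\le S_K(p_0)\le C$ for $K\ge1$: take $I=[0,1]$ and level $K$ in place of $L$. Here $K\ge1$ is needed so that there is one level to split. Proposition~\ref{prop:index}(1) then gives $p^{\Tdy}(x)=p_0$.

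The third step is the rate. I would rerun the bounds \eqref{eq:below-bound}--\eqref{eq:above-bound} of Theorem~\ref{thm:consistency}, using the exact quantities $N_K=2^K$ and $M_L\le Y2^{-LH}$. Then
\[
\Lambda_K\le Y\,2^{-H(L-K)},\qquad \log(1/\Lambda_K)\ge H(L-K)\log2-\log Y.
\]
So $\Lambda_K\to0$ whenever $L-K\to\infty$, and the power-separation hypothesis \eqref{eq:power-separation} is not needed, exactly as in Corollary~\ref{cor:exact-holder}. The root bound $C_2/\log(1/\Lambda_K)$ then gives $C_P/(L-K)$ for large $K$.

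The main obstacle is getting the lower local bound uniformly in $L>K$ with constants independent of $b$ and $x(0)$. The drift enters only through $y_{\text{root}}$, which affects only $Y$. The lower bound uses just the last-level dichotomy, so the constant $c$ is uniform. The only thing to check with care is the normalization of $e_{m,k}$: its peak value is $2^{-m/2}\cdot\tfrac12$, and the resulting factor $\tfrac12$ is what produces the $\theta$ term in the recursion above.
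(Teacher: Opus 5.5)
Your proof is correct, and its skeleton is the paper's. Your one-step recursion $y_{J',J''}=\rho(y_J\pm\theta_{n,j})$, with $\rho=2^{H-1}$, unrolled from level $K$ to level $L$, is exactly the tree representation \eqref{eq:tree-increment} with prefix $v_{K,L,I}=\rho^{L-K}y_I$. Your rate step is the argument of Corollary~\ref{cor:exact-holder} with $\Lambda_K\le Y2^{-H(L-K)}$. You obtain $M_L\le Y2^{-LH}$ from the recursion itself rather than from the $C^H$ characterization, which is slightly more self-contained.

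The genuine difference is the lower bound on $D_{L,I}(p_0)$. The paper chooses a fine cell uniformly at random and treats the summands of \eqref{eq:tree-increment} as orthogonal martingale differences, which gives $\E Y^2\ge a_-^2\rho^2$. It then converts this into a lower bound on $\E|Y|^{p_0}$, by H\"older's inequality if $p_0\ge2$ and by the a priori bound $|Y|\le B_0$ if $1<p_0<2$. Your deterministic dichotomy instead uses $|y+\theta|+|y-\theta|\ge2a_-$, so each level-$(L-1)$ cell has a child with $|y|\ge\rho a_-$. This bounds the $p_0$-th power sum directly and gives $c=\tfrac12(\rho a_-)^{p_0}$, with no case split and no dependence on the drift $b$. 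By contrast, the paper's constant for $p_0<2$ involves $B_0$.

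What the paper's probabilistic formulation buys is reuse. The same random-tree picture, with the products $\theta_{m,k_m}\varepsilon_m$ as independent symmetric signs, produces the exact law $Z_d$ behind Theorem~\ref{thm:signed-rate} and Remark~\ref{rem:finite-sign-invariance}. Orthogonality also yields the exact quadratic sums in Example~\ref{ex:oscillating}. Your dichotomy is the more elementary route to the two-sided bounds, but it does not give those exact formulas.
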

\begin{proof}
Fix $I\in\Tdy^K$ and choose one of its $2^{L-K}$ fine cells uniformly.
We randomize only to average; we pick one of the $2^{L-K}$ fine cells of $I$ uniformly at random; $x$ itself stays deterministic.
All probabilities and expectations below refer solely to the uniform choice of a fine interval inside the fixed coarse interval $I$.  Successive
left/right choices in the dyadic tree give independent signs
$\varepsilon_m\in\{-1,1\}$, $K\leq m<L$. 
The tent selected at level
$m$ is determined by earlier choices; its coefficient is therefore predictable before $\varepsilon_m$ is revealed. Let $\rho=2^{-(1-H)}$.
The normalized increment on the chosen cell is exactly
\begin{equation}\label{eq:tree-increment}
 Y=2^{LH}\Delta_Jx
  =v_{K,L,I}+\sum_{m=K}^{L-1}\rho^{L-m}\theta_{m,k_m}\varepsilon_m.
\end{equation}
Indeed, tents at levels $m\geq L$ vanish at all fine grid points, and the
slope of a level-$m$ summand is
$\pm2^{m(1-H)}\theta_{m,k_m}$. The affine term and levels $m<K$ are
linear on $I$; their normalized contribution is the fixed number
$v_{K,L,I}$, satisfying
\begin{equation}\label{eq:prefix-bound}
 |v_{K,L,I}|\leq C\rho^{L-K}.
\end{equation}
The summands in \eqref{eq:tree-increment} are martingale differences on
this finite tree and are pairwise orthogonal in $L^2$. It follows that
\[
 |Y|\leq B_0:=|b|+a_+\rho/(1-\rho),\qquad
 \E Y^2\geq\sum_{m=K}^{L-1}\rho^{2(L-m)}a_-^2
             \geq a_-^2\rho^2>0.
\]
If $p_0\geq2$, the H\"older inequality gives
$\E|Y|^{p_0}\geq(\E Y^2)^{p_0/2}$.
If $1<p_0<2$, use $Y^2\leq B_0^{2-p_0}|Y|^{p_0}$.
Thus $c\leq\E|Y|^{p_0}\leq C$, uniformly in $K,L,I$.
Since $Hp_0=1$, averaging over the fine cells gives the exact identity
\[
 D_{L,I}(p_0)=2^{L-K}2^{-LHp_0}\E|Y|^{p_0}
             =|I|\E|Y|^{p_0}.
\]
This proves \eqref{eq:schauder-bounds}. Taking the single interval at
level zero and fine level $K\geq1$ proves the bounds for $S_K(p_0)$.
Apply Proposition~\ref{prop:index} and Corollary~\ref{cor:exact-holder},
using $|\pi^K|/\delta_K=2^{L-K}$.
\end{proof}

\subsection{Asymptotic formula and error bounds for signed  Takagi--Landsberg functions}
Following the notation of \cite{Misura2019}, let
\begin{equation}\label{eq:Xp}
 \mathcal X^p=\left\{\sum_{m=0}^{\infty}2^{m(1/2-1/p)}
       \sum_{k=0}^{2^m-1}\theta_{m,k}e_{m,k}:
                   \theta_{m,k}\in\{-1,1\}\right\},\qquad p>1.
\end{equation}
These are the signed Takagi--Landsberg functions studied by Mishura and
Schied \cite{Misura2019}. Their dyadic critical variation is known to be
linear. The next result controls the finite local ratios, uniformly in the
trial exponent and in every sign array.

\begin{theorem}[Uniform formula and error bound]\label{thm:signed-rate}
Let $x\in\mathcal X^{p_0}$, $H=1/p_0$, $\rho=2^{-(1-H)}$, and
$P\subset(1,\infty)$ compact. Uniformly in $q\in P$ and in the sign array,
as $K\to\infty$ and $d=L-K\to\infty$,
\begin{equation}\label{eq:signed-asymptotic}
 W(L,K,\Tdy,q,1,x)
 =2^{d(Hq-1)}\bigl(1+O_P(\rho^K+\rho^d)\bigr).
\end{equation}
If $p_0$ is interior to $P$, every root in $P$ satisfies
\begin{equation}\label{eq:signed-rate}
 |\widehat p^{\Tdy}_{L,K}(x)-p_0|
 \leq C_P\frac{\rho^K+\rho^{L-K}}{L-K}.
\end{equation}
The same bound, with another constant, holds for
$|\widehat H^{\Tdy}_{L,K}(x)-H|$.
In particular $L=2K$ gives an error bound
$O(2^{-K(1-H)}/K)$, and $W(L,K,\Tdy,p_0,1,x)\to1$.
\end{theorem}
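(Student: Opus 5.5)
The plan is to reuse the dyadic-tree representation from the proof of Theorem~\ref{thm:schauder}, which becomes exact and sign-free in the signed case. Here $b=0$, $x(0)$ is irrelevant, and $|\theta_{m,k}|=1$. Along a uniformly chosen path in the dyadic tree, the coefficient $\theta_{m,k_m}$ is predictable and $\varepsilon_m$ is an independent symmetric sign. Hence the products $\eta_m=\theta_{m,k_m}\varepsilon_m$ are i.i.d.\ Rademacher variables, whatever the sign array. I would write $d=L-K$ and define the sign-free random variables
\[
 Z_n=\sum_{j=1}^{n}\rho^j\eta_j ,\qquad Z_\infty=\lim_n Z_n .
\]

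\textbf{Step 1 (denominators).} By \eqref{eq:tree-increment}, for $I\in\Tdy^K$ the normalized fine increment is $Y=v_{K,L,I}+Z_d'$, where $Z_d'$ has the law of $Z_d$ and $|v_{K,L,I}|\le C\rho^{d}$ by \eqref{eq:prefix-bound}. Averaging over fine cells, as in that proof but with exponent $q$ instead of $p_0$, gives
\[
 D_{L,I}(q)=2^{d}\,2^{-LHq}\,\E|v_{K,L,I}+Z_d|^q .
\]

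\textbf{Step 2 (coarse increments).} Applying the same representation with coarse level $0$ and fine level $K$, the normalized coarse increment $V_I=2^{KH}\Delta_Ix$ is a deterministic function of $I$. The empirical law of $V_I$ over the $2^K$ cells is exactly the law of $Z_K$; the linear prefix vanishes because there are no levels below $0$ and $b=0$. Hence
\[
 2^{-K}\sum_{I\in\Tdy^K}|V_I|^q=\E|Z_K|^q .
\]
Substituting Steps 1 and 2 into \eqref{statistic.W} and collecting powers of $2$ gives
\[
 W(L,K,\Tdy,q,1,x)=2^{d(Hq-1)}\,2^{-K}\sum_{I\in\Tdy^K}\frac{|V_I|^q}{\E|v_{K,L,I}+Z_d|^q}.
\]

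\textbf{Step 3 (comparison with $Z_\infty$).} The key estimates are
\[
 \E|Z_K|^q=\E|Z_\infty|^q\bigl(1+O(\rho^K)\bigr),\qquad
 \E|v_{K,L,I}+Z_d|^q=\E|Z_\infty|^q\bigl(1+O(\rho^d)\bigr),
\]
uniformly in $q\in P$ and in $I$. I would use $|Z_n|\le\rho/(1-\rho)$, $|Z_\infty-Z_n|\le\rho^{n+1}/(1-\rho)$ and $|v_{K,L,I}|\le C\rho^{d}$. Since $q\ge1$ on $P$, the elementary bound $\bigl||a|^q-|b|^q\bigr|\le q\max(|a|,|b|)^{q-1}|a-b|$ holds with uniformly bounded prefactor. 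It then remains to bound $\E|Z_\infty|^q$ below uniformly on $P$. This follows from $\E Z_\infty^2\ge\rho^2$ together with $Z_\infty^2\le B_0^{2-q}|Z_\infty|^q$ or Jensen, exactly as in Theorem~\ref{thm:schauder}. Dividing the two estimates yields \eqref{eq:signed-asymptotic}. At $q=p_0$ the prefactor $2^{d(Hq-1)}$ equals $1$, so $W(L,K,\Tdy,p_0,1,x)\to1$.

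\textbf{Step 4 (roots).} At a root $q\in P$ the asymptotic formula gives
\[
 d\,(Hq-1)\log 2=-\log\bigl(1+O_P(\rho^K+\rho^d)\bigr)=O_P(\rho^K+\rho^d).
\]
Since $|Hq-1|=H|q-p_0|$, this gives \eqref{eq:signed-rate}. The bound for $\widehat H$ follows from $|1/q-1/p_0|\le|q-p_0|/(p_-p_0)$, and the choice $L=2K$ gives the stated rate $O(2^{-K(1-H)}/K)$. The step needing most care is Step 2, namely checking that the coarse normalized increments reproduce exactly the law of $Z_K$ with no residual linear term and with the same scaling $\rho^{K-m}$. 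I would verify this by redoing the slope computation of \eqref{eq:tree-increment} with coarse level $0$. Uniformity of all constants in $q\in P$ and in the sign array should follow automatically, since after the Rademacher reduction nothing depends on the signs.
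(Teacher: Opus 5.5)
Your proposal is correct and follows the paper's proof essentially step by step: the predictable-sign reduction to i.i.d.\ Rademacher products, comparison of $\E|v_{K,L,I}+Z_d|^q$ and $\E|Z_K|^q$ with $c_H(q)=\E|Z_\infty|^q$ via uniform Lipschitz bounds, substitution into $W$, and taking logarithms at a root. The differences are cosmetic: you write out the exact intermediate identity for $W$, which the paper states separately in Remark~\ref{rem:finite-sign-invariance}, and you bound $c_H$ from below with the Jensen/boundedness argument of Theorem~\ref{thm:schauder}, whereas the paper uses continuity, positivity and compactness of $P$.
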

\begin{proof}
In \eqref{eq:tree-increment}, each $\theta_{m,k_m}$ is now a predictable sign. Since $\theta_{m,k_m}\in\{\pm1\}$ is determined by earlier choices, $\theta_{m,k_m}\varepsilon_m$ is again a symmetric sign, independent of the past. Inductively, the products
$\theta_{m,k_m}\varepsilon_m$, $K\leq m<L$, are independent symmetric
signs on the finite tree: conditionally on all earlier choices either
product value has probability $1/2$.
Consequently, for auxiliary independent Rademacher variables $Y_r$, the
tail in \eqref{eq:tree-increment} has the distribution of
\[
 Z_d=\sum_{r=1}^d\rho^rY_r,
 \qquad Z_\infty=\sum_{r=1}^{\infty}\rho^rY_r,
 \qquad c_H(q)=\E|Z_\infty|^q.
\]
Both $|Z_\infty-Z_d|$ and $|v_{K,L,I}|$ are bounded by $C\rho^d$.
On a common bounded interval, the functions $z\mapsto|z|^q$ have
Lipschitz constants bounded uniformly for $q\in P\subset(1,\infty)$.
It follows that
\begin{equation}\label{eq:moment-approx}
 \E|v_{K,L,I}+Z_d|^q=c_H(q)+O_P(\rho^d).
\end{equation}
The constants are independent of $I$ and of the sign array.
The function $c_H$ is continuous by bounded convergence and strictly
positive, since $\E Z_\infty^2=\rho^2/(1-\rho^2)>0$.
It thus has a positive minimum on $P$.
Averaging fine increments as above yields
\begin{equation}\label{eq:signed-local}
 D_{L,I}(q)=|I|2^{L(1-Hq)}c_H(q)\bigl(1+O_P(\rho^d)\bigr).
\end{equation}
At coarse level $K$, the affine term is absent in \eqref{eq:Xp}, and
the same tree argument over the whole interval gives
\begin{equation}\label{eq:signed-global}
 S_K(q)=2^{K(1-Hq)}c_H(q)\bigl(1+O_P(\rho^K)\bigr).
\end{equation}
For large $d$, taking reciprocals of the last factor in
\eqref{eq:signed-local} preserves its uniform error bound. Substitute
into \eqref{statistic.W} and then use \eqref{eq:signed-global}.
This proves \eqref{eq:signed-asymptotic}.
At a root, take logarithms and use $Hp_0=1$ and
$|\log(1+u)|\leq2|u|$ for $|u|\leq1/2$. This proves
\eqref{eq:signed-rate}. 
\end{proof}

\begin{remark}[Exact independence from the signs]\label{rem:finite-sign-invariance}
In fact, for dyadic observations of $x\in\mathcal X^{p_0}$, the entire
finite-sample statistic is independent of the sign array. With $d=L-K$,
let $Z'_d$ be an independent copy of $Z_d$ and set
$f_d(u,q)=\E|\rho^d u+Z'_d|^q$. Then
\begin{equation}\label{eq:exact-signed-statistic}
 W(L,K,\Tdy,q,1,x)=
 2^{d(Hq-1)}\E\left[\frac{|Z_K|^q}{f_d(Z_K,q)}\right],\qquad q>0.
\end{equation}
To see this, on a coarse block put $u_I=2^{KH}\Delta_Ix$.
Its fine prefix is exactly $\rho^d u_I$, so the fine denominator is
$|I|2^{L(1-Hq)}f_d(u_I,q)$. The uniform distribution of the $2^K$
values $u_I$ is the distribution of $Z_K$, by the signed-tree argument
in the proof. Substitution into \eqref{statistic.W} gives
\eqref{eq:exact-signed-statistic}. Its denominator is positive since
$Z'_d$ is nonconstant. We would like to stress that the equality \eqref{eq:exact-signed-statistic} does not extend automatically to nonuniform partitions or to varying coefficient magnitudes.
\end{remark}

\begin{remark}[Critical-variation constant]
The constant here agrees with the corrected critical-variation formula in
\cite{Misura2019}:
\[
 c_H(p_0)=2^{1-p_0}\E\left|\sum_{r=0}^{\infty}\rho^rY_r\right|^{p_0}.
\]
Indeed $\rho^{p_0}=2^{1-p_0}$. The corrected arXiv version includes this
factor; its omission from the published theorem is noted also in
\cite{DasSchied2026}. The present proof does not rely on an uncorrected
normalization constant, and that constant cancels from the estimating ratio.
\end{remark}

\subsection{Consistency without convergence of critical variation}
The following example,  inspired by the example in \cite[Proposition 2.7]{schied2016} shows that a continuous function may fail to have quadratic variation along a partition sequence, yet our variation index estimator along the same sequence can still be consistent: $x \notin V_2(\mathbb T)$, however the variation index estimator along $\mathbb T$ is still consistent.

\begin{example}[Oscillating critical sums]\label{ex:oscillating}
Take $H=1/2$, $b=x(0)=0$, and $\theta_{m,k}=a_m$ for all $k$, where
$a_m=1$ when $m$ is even and $a_m=2$ when $m$ is odd.
Then $p^{\Tdy}(x)=2$, but
\begin{equation}\label{eq:oscillating-limits}
 S_{2n}(2)\longrightarrow3,\qquad S_{2n+1}(2)\longrightarrow2.
\end{equation}
For $L=2K$, the normalized statistic at the true exponent satisfies
\begin{equation}\label{eq:oscillating-W}
 W(2K,K,\Tdy,2,1,x)\longrightarrow
 \begin{cases}1,&K\text{ even},\\2/3,&K\text{ odd},\end{cases}
\end{equation}
where the limits are along the indicated subsequences. In particular $x \notin V_2(\mathbb T)$.
Nevertheless,
every root estimator in any compact $P$ with $2$ in its interior converges to $2$.
\end{example}
\begin{proof}
The coefficients satisfy \eqref{eq:schauder}, so all assertions about the
index and the roots follow from Theorem~\ref{thm:schauder}.
Orthogonality of the dyadic signs in \eqref{eq:tree-increment} gives
\[
 S_n(2)=2^{-n}\sum_{m=0}^{n-1}a_m^2\,2^m
       =\sum_{r=1}^n2^{-r}a_{n-r}^2.
\]
For even $n$, the odd values of $r$ have coefficient $4$ and the even
ones coefficient $1$, giving the limit
$4(2/3)+1(1/3)=3$. For odd $n$ the roles reverse, giving $2$.
In a coarse block at level $K$, the prefix $v=v_{K,2K,I}$ has
$v^2=O(2^{-K})$ uniformly, while the tail has mean zero and variance
\[
 \sum_{r=1}^K2^{-r}a_{2K-r}^2=3+O(2^{-K}).
\]
Thus $D_{2K,I}(2)/|I|=3+O(2^{-K})$ uniformly in $I$, and
$W(2K,K,\Tdy,2,1,x)=\tfrac13S_K(2)+O(2^{-K})$.
Equation~\eqref{eq:oscillating-W} follows.
\end{proof}
The above example 
 shows
 why a proof based only on $W_K(p_0)\to1$ would miss part of the
consistency phenomenon.
\section{Estimating the Hurst exponent of Fractional Brownian motion}\label{sec:fbm}
Let $B^H$ be standard fractional Brownian motion on $[0,\infty)$, observed
on $[0,1]$, with $H\in(0,1)$ and covariance
\[
 \E[B^H_sB^H_t]=\tfrac12(s^{2H}+t^{2H}-|t-s|^{2H}).
\]
Set $p_0=1/H$ and $\mu_H=\E|Z|^{p_0}$ for $Z\sim N(0,1)$.

Using the $p$-roughness property of fractional Brownian motion \cite{Cont2026}, we can apply Proposition \ref{prop:fixed-coarse-consistency} directly:
\begin{corollary}[Consistency with a fixed coarse partition]
\label{cor:fbm-fixed-coarse}
Let $B^H$ be standard fractional Brownian motion on $[0,1]$, with
$H\in(0,1)$ and $p_0=1/H$. Fix
$P=[p_-,p_+]\subset(1,\infty)$ with $p_-<p_0<p_+$.
There exists an event of probability one such that, for every path
$x=B^H(\omega)$ on this event and every fixed integer $K_0\geq0$,
all fine denominators are positive and
\[
 W(L,K_0,\Tdy,p_-,1,x)<1<W(L,K_0,\Tdy,p_+,1,x)
\]
for all sufficiently large $L$. Thus the estimating equation
has a solution in $(p_-,p_+)$ and
\[
 \lim_{L\to\infty}
 \sup_{q\in\mathcal Z_{L,K_0}(x;P)}|q-p_0|=0.
\]
In particular, every root selection satisfies
$\widehat p^{\Tdy}_{L,K_0}(x)\to1/H$ and
$\widehat H^{\Tdy}_{L,K_0}(x)\to H$ as $L\to\infty$.
\end{corollary}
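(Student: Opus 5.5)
The plan is to reduce Corollary~\ref{cor:fbm-fixed-coarse} to Proposition~\ref{prop:fixed-coarse-consistency}. That requires a single almost-sure event on which two facts hold. First, $x=B^H(\omega)\in V_{p_0}(\Tdy)$ with $[x]^{(p_0)}_{\Tdy}(t)=\mu_H t$. Second, the nondegeneracy condition \eqref{eq:fixed-coarse-nondegeneracy} holds simultaneously for every $K_0\ge0$.

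For the first fact I would invoke the known $p$-roughness of fBM along dyadic (uniform) grids from \cite{Cont2026}. This gives almost surely that the stopped power sums converge uniformly in time to $\mu_H t$. Alternatively, one can cite the classical result: for every fixed $t$, $\sum_{J\in\Tdy^L,\,v\le t}|\Delta_J B^H|^{p_0}\to\mu_H t$ almost surely. This is the ergodic/Borel--Cantelli law of large numbers for normalized power variations of fBM \cite{Gladyshev1961,viitasaari2019}.

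If only pointwise almost-sure convergence is available, I would apply it for $t$ in the countable set of dyadic rationals and intersect the events. Monotonicity of $A_L^{(p_0)}$ in $t$ and continuity of the limit then upgrade this to uniform convergence (Pólya/Dini-type argument). By \cite[Lemma 1.3]{perkowski2019} this is equivalent to $x\in V_{p_0}(\Tdy)$. Call this event $\Omega_1$.

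On $\Omega_1$ the limiting variation is $\mu_H t$, which is strictly increasing because $\mu_H>0$. Hence every coarse interval $I=[u,v]\in\Tdy^{K_0}$ has $[x]^{(p_0)}(v)-[x]^{(p_0)}(u)=\mu_H 2^{-K_0}>0$, for every $K_0$ at once. For the second condition, $S_{K_0}(p_0)>0$, I need that not all coarse increments vanish. Since $B^H_1-B^H_0=B^H_1\sim N(0,1)$ is nonzero almost surely, let $\Omega_2=\{B^H_1\ne0\}$. On $\Omega_2$ at least one increment on each $\Tdy^{K_0}$ is nonzero, because the increments telescope to $B^H_1$. So $S_{K_0}(p_0)>0$ for all $K_0$.

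On $\Omega_1\cap\Omega_2$, Proposition~\ref{prop:fixed-coarse-consistency} applies with $T=1$ for each fixed $K_0$. It yields positivity of the fine denominators, the bracketing $W(L,K_0,\Tdy,p_-,1,x)<1<W(L,K_0,\Tdy,p_+,1,x)$, existence of a root, and uniform convergence of all roots to $p_0$. The statement for $\widehat H$ follows by continuity of $q\mapsto1/q$ on $P$.

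The only genuine obstacle is obtaining the almost-sure convergence of dyadic power variations for all $H\in(0,1)$ on a single null-set-free event, rather than only convergence in probability or in $L^2$. For $H>3/4$ the CLT fails, but the law of large numbers still holds. I would first try the route via \cite{Cont2026}, which treats fBM as $p$-rough. Failing that, I would use variance bounds $\mathrm{Var}(2^{L(Hp_0-1)}S_L)=O(2^{-L\gamma})$ for some $\gamma>0$ together with Borel--Cantelli along the geometric sequence of dyadic levels, which is summable.
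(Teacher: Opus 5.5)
Your proposal is correct and follows the paper's proof essentially verbatim. The paper likewise takes the almost-sure $p_0$-roughness of fBM with strictly increasing intrinsic variation $\mu_H t$, intersects with $\{B^H_1\neq0\}$, uses telescoping of the coarse increments to get $S_{K_0}(p_0)>0$ for every $K_0$ on that single event, and then applies Proposition~\ref{prop:fixed-coarse-consistency} for each $K_0$. Your fallback route is also sound: almost-sure convergence at dyadic times via Borel--Cantelli, upgraded to uniform convergence by monotonicity. It mirrors the argument the paper uses inside the proof of Theorem~\ref{thm:fbm}.
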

\begin{proof}
By \cite[Theorem~6.5]{Cont2026}, almost every path of $B^H$ is
$p_0$-rough with intrinsic variation $[B^H]^{(p_0)}(t)=\mu_Ht$ which is strictly increasing. Intersect this
probability-one event with $\{B^H_1\neq0\}$, which also has probability
one. For every $K_0\geq0$, $S_{K_0}(p_0)>0$: otherwise all coarse increments would vanish,
contradicting
$\sum_{I\in\Tdy^{K_0}}\Delta_Ix=x(1)-x(0)=B^H_1\neq0$.
Proposition~\ref{prop:fixed-coarse-consistency} therefore applies to
each  $K_0$ on this same event.
\end{proof}
The above corollary treats a fixed coarse partition. To establish consistency
when $K\to\infty$ and $L(K)=\lceil aK\rceil$ for some $a>1$,
we need uniform relative control over the shrinking coarse
intervals.
The following estimate is a consequence of Gaussian concentration
for norms of Gaussian vectors; see
\cite[Theorem~5.6 and Example~5.7]{BoucheronLugosiMassart2013}.
We give the specialization to fractional Gaussian noise explicitly.

\begin{lemma}\label{lem:fgn}
Let $Z_j=B^H_j-B^H_{j-1}$, $1\leq j\leq m$, denote a vector
of unit-scale fractional Gaussian noise, with $H\in(0,1)$. Put
\[
 \mu(r)=\E|Z|^r,\qquad Z\sim N(0,1),\qquad
 \lambda_H(r)=\min\{1/r,1/2\}-(H-1/2)_+.
\]
For every fixed $r>1$ such that $\lambda_H(r)>0$, there are
constants $C_{H,r},c_{H,r}>0$ such that, for every integer $m\geq1$
and every $1\leq u\leq m^{\lambda_H(r)}$,
\[
 \Prob\left(
 \left|m^{-1}\sum_{j=1}^m|Z_j|^r-\mu(r)\right|
 >C_{H,r}u\,m^{-\lambda_H(r)}
 \right)
 \leq 2\exp(-c_{H,r}u^2).
\]
In particular, for $p_0=1/H$ and $\mu_H=\mu(p_0)$,
\[
 \lambda_H(p_0)=\kappa:=\min(H,1-H)>0.
\]
For every $\varepsilon\in(0,\mu_H)$ there are
$c_\varepsilon>0$ and $m_\varepsilon$ such that
\begin{equation}\label{eq:fgn-concentration}
 \Prob\left(
 \left|m^{-1}\sum_{j=1}^m|Z_j|^{p_0}-\mu_H\right|
 >\varepsilon
 \right)
 \leq2\exp(-c_\varepsilon m^{2\kappa}),
 \qquad m\geq m_\varepsilon.
\end{equation}
\end{lemma}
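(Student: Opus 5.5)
We split the deviation into a concentration part and a bias part:
\[
m^{-1}\sum_{j=1}^m|Z_j|^r-\mu(r)=\bigl(F(Z)-\E F(Z)\bigr),\qquad F(z)=m^{-1}\sum_{j=1}^m|z_j|^r,
\]
where $\E F(Z)=\mu(r)$ exactly, because each $Z_j\sim N(0,1)$ by stationarity of fractional Gaussian noise. So no bias term appears, and everything reduces to concentration of $F(Z)$ around its mean. We write $Z=\Sigma^{1/2}G$ with $G$ standard Gaussian in $\R^m$ and $\Sigma$ the fGn covariance matrix, so that $F(Z)=\tilde F(G)$. The plan is to apply Gaussian concentration for Lipschitz functions \cite[Theorem~5.6]{BoucheronLugosiMassart2013}, or in the case $r>2$ a truncated or local version of it.

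\textbf{Case $1<r\le2$.} The map $z\mapsto(\sum|z_j|^r)^{1/r}=\|z\|_r$ is a norm. Then $\|\Sigma^{1/2}g\|_r\le m^{1/r-1/2}\|\Sigma^{1/2}g\|_2\le m^{1/r-1/2}\|\Sigma\|_{\mathrm{op}}^{1/2}\|g\|_2$. For $H\le1/2$ the fGn covariances are summable, so $\|\Sigma\|_{\mathrm{op}}=O(1)$. For $H>1/2$ we have $\rho(k)\sim H(2H-1)k^{2H-2}$, and the Schur test gives $\|\Sigma\|_{\mathrm{op}}=O(m^{2H-1})$. Hence the Lipschitz constant of $g\mapsto\|Z\|_r$ is $\sigma_m=O(m^{1/r-1/2+(H-1/2)_+})$. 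Concentration gives $\|Z\|_r=\E\|Z\|_r+O(\sigma_m u)$ with probability at least $1-2e^{-u^2/2}$. To pass to $\|Z\|_r^r/m$, we use $\E\|Z\|_r\asymp m^{1/r}$ together with the Gaussian Poincaré inequality, which controls the gap between $(\E\|Z\|_r)^r$ and $\E\|Z\|_r^r$. A mean value estimate then gives a deviation of order $m^{-1}\cdot m^{(r-1)/r}\sigma_m u=O(u\,m^{-1/2+(H-1/2)_+})$ in the regime $u\le m^{\lambda}$, where the perturbation stays comparable to $m^{1/r}$. Since $\lambda_H(r)=1/2-(H-1/2)_+$ when $r\le2$, this matches the claim.

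\textbf{Case $r>2$.} Here $\min\{1/r,1/2\}=1/r$. We use the same norm argument, but with $\|z\|_r\le\|z\|_2$ in place of the $m^{1/r-1/2}$ factor. This gives Lipschitz constant $\sigma_m=O(m^{(H-1/2)_+})$. Linearization then yields a deviation of order $m^{-1}m^{(r-1)/r}\sigma_m u=u\,m^{-1/r+(H-1/2)_+}$, again consistent with $\lambda_H(r)$. This is why the rate degrades for large $r$.

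The step we expect to be the main obstacle is the linearization from $\|Z\|_r$ to $\|Z\|_r^r$. It requires both a deterministic lower bound $\E\|Z\|_r\ge c\,m^{1/r}$ and control of $|(\E\|Z\|_r)^r-\E\|Z\|_r^r|$. For the lower bound we would use $\E\|Z\|_r\ge\|Z\|$-moment comparisons, or simply Jensen applied to $\E\|Z\|_r^r=m\mu(r)$ combined with the variance bound. The gap is bounded by integrating the concentration tail: with $X=\|Z\|_r$, $\operatorname{Var}X\le\sigma_m^2$, so $\E X^r-(\E X)^r=O(m^{(r-1)/r}\sigma_m)$ and hence after dividing by $m$ it is of order $m^{-\lambda}$. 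This is absorbed into $C_{H,r}u\,m^{-\lambda}$ because $u\ge1$. The constraint $u\le m^{\lambda}$ keeps $\sigma_m u\lesssim m^{1/r}$, so that $(a+\delta)^r-a^r\le C a^{r-1}\delta$ holds with a uniform constant.

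\textbf{Specialization to $r=p_0=1/H$.} Here $\lambda_H(p_0)=\min\{H,1/2\}-(H-1/2)_+$. This equals $H$ when $H\le1/2$ and $1/2-(H-1/2)=1-H$ when $H>1/2$, so $\lambda_H(p_0)=\kappa$. For \eqref{eq:fgn-concentration}, we take $u=\varepsilon m^{\kappa}/C_{H,p_0}$. This satisfies $1\le u\le m^{\kappa}$ once $m\ge m_\varepsilon$, since $\varepsilon<\mu_H$ and we may enlarge $C_{H,p_0}$ so that $\varepsilon/C_{H,p_0}\le1$. The tail bound then becomes $2\exp(-c_{H,p_0}\varepsilon^2C_{H,p_0}^{-2}m^{2\kappa})$, which gives the result with $c_\varepsilon=c_{H,p_0}\varepsilon^2/C_{H,p_0}^2$.
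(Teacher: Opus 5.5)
Your proposal is correct and follows essentially the paper's route: Gaussian concentration for $g\mapsto\|\Gamma_m^{1/2}g\|_r$ with Lipschitz constant of order $m^{1/r-\lambda_H(r)}$ from the fGn operator-norm bound, then linearization of $z\mapsto z^r$ on the bounded range forced by $u\le m^{\lambda_H(r)}$, and the same choice $u\propto\varepsilon m^{\kappa}$ in the specialization. The differences are minor. The paper controls the gap between $\E\|Z\|_r$ and $(m\mu(r))^{1/r}$ in one line, by the reverse triangle inequality in $L^r$ together with $\E F^r=\mu(r)$, rather than by your Taylor expansion with integrated tails. It also needs only the Jensen upper bound $\E\|Z\|_r\le(m\mu(r))^{1/r}$, so the lower bound $\E\|Z\|_r\ge c\,m^{1/r}$ that you flag as part of the main obstacle is not actually required.
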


\begin{proof}
The covariance matrix $\Gamma_m$ has entries $r_H(i-j)$, where
\[
 r_H(k)=\tfrac12
 \bigl(|k+1|^{2H}+|k-1|^{2H}-2|k|^{2H}\bigr).
\]
For $|k|\geq2$, the second-difference formula gives
$|r_H(k)|\leq C_H|k|^{2H-2}$.
The absolute row-sum bound, together with $\Gamma_m=I_m$
when $H=1/2$, therefore yields
\begin{equation}\label{eq:covariance-norm}
 \|\Gamma_m\|_{\mathrm{op}}\leq
 \begin{cases}
 C_H,&H\leq1/2,\\
 C_Hm^{2H-1},&H>1/2.
 \end{cases}
\end{equation}
Represent the vector as $\Gamma_m^{1/2}G$, where $G$ is standard
Gaussian in $\R^m$, and define
\[
 F_{m,r}(G)=m^{-1/r}\|\Gamma_m^{1/2}G\|_r.
\]
The reverse triangle inequality and comparison of
finite-dimensional norms give
\[
 \Lip(F_{m,r})
 \leq m^{-\min\{1/r,1/2\}}
       \|\Gamma_m\|_{\mathrm{op}}^{1/2}
 \leq C_Hm^{-\lambda_H(r)}.
\]
Consequently, Gaussian concentration
\cite[Theorem~5.6]{BoucheronLugosiMassart2013} implies
\[
 \Prob\bigl(|F_{m,r}-\E F_{m,r}|>v\bigr)
 \leq2\exp\bigl(-c_Hv^2m^{2\lambda_H(r)}\bigr),
 \qquad v>0.
\]
Integration of this tail bound gives
\[
 \|F_{m,r}-\E F_{m,r}\|_{L^r}
 \leq C_{H,r}m^{-\lambda_H(r)}.
\]
Since $\E F_{m,r}^r=\mu(r)$, the reverse triangle inequality
in $L^r$ yields
\[
 \left|\E F_{m,r}-\mu(r)^{1/r}\right|
 \leq C_{H,r}m^{-\lambda_H(r)}.
\]
Also, Jensen's inequality gives $\E F_{m,r}\leq\mu(r)^{1/r}$.
Thus, on the event
\[
 |F_{m,r}-\E F_{m,r}|\leq u\,m^{-\lambda_H(r)},
 \qquad 1\leq u\leq m^{\lambda_H(r)},
\]
we have $0\leq F_{m,r}\leq\mu(r)^{1/r}+1$ and
\[
 |F_{m,r}-\mu(r)^{1/r}|
 \leq C_{H,r}u\,m^{-\lambda_H(r)}.
\]
The map $z\mapsto z^r$ is Lipschitz on this bounded interval.
It follows that
\[
 |F_{m,r}^r-\mu(r)|
 \leq C_{H,r}u\,m^{-\lambda_H(r)},
\]
after increasing the constant. Applying Gaussian concentration
to the complementary event proves the quantitative estimate.
Finally, $\lambda_H(p_0)=\kappa$. Given
$\varepsilon\in(0,\mu_H)$, choose
\[
 b_\varepsilon
 =\min\{1,\varepsilon/(2C_{H,p_0})\},
 \qquad u=b_\varepsilon m^\kappa.
\]
For sufficiently large $m$, this choice satisfies
$1\leq u\leq m^\kappa$, and the quantitative estimate implies
\eqref{eq:fgn-concentration}, with
$c_\varepsilon=c_{H,p_0}b_\varepsilon^2$.
\end{proof}

\begin{theorem}[Strong consistency for fractional Brownian motion]
\label{thm:fbm}
Let $B^H$ be standard fractional Brownian motion with $H\in(0,1)$,
and put
\[
 p_0=\frac1H,
 \qquad \mu_H=\E|Z|^{p_0},\quad Z\sim N(0,1).
\]
Fix $a>1$ and $
 P=[p_-,p_+],\quad 1<p_-<p_0<p_+<\infty.$
For each integer $K\geq1$, use the coarse partition $\Tdy^K$ and
the fine partition $\Tdy^{L(K)}$ of $[0,1]$, where
\[
 L(K)=\lceil aK\rceil.
\]
There exists an event $\Omega_0$ with $\Prob(\Omega_0)=1$ such that,
for  $\omega\in\Omega_0$, the path $x=B^H(\omega)$ satisfies:
\begin{enumerate}
\item 
There exists $K_1=K_1(\omega)>0$ such that, for every $K\geq K_1$,
$D_{L(K),I}(q)>0$ for every $I\in\Tdy^K,\ q\in P$
and
\[
 W(L(K),K,\Tdy,p_-,1,x)
 <1<
 W(L(K),K,\Tdy,p_+,1,x).
\]
Consequently, the estimating equation
\begin{equation}\label{eq.root}
     W(L(K),K,\Tdy,q,1,x)=1
\end{equation}
has at least one solution in $(p_-,p_+)$ for any $K\geq K_1$.
\item 
 For each $\omega\in\Omega_0$, there are constants
$C(\omega)>0$ and $K_2(\omega)\geq K_1(\omega)$ such that 
\[
\forall K\ge K_2(\omega),\qquad \sup_{q\in\mathcal Z_{L(K),K}(x;P)}
 \left(|q-p_0|+\left|q^{-1}-H\right|\right)
 \le \frac{C(\omega)}{K}.
\]
so all solutions of \eqref{eq.root} are consistent estimators:
\[
 \lim_{K\to\infty}
 \sup_{q\in\mathcal Z_{L(K),K}(x;P)}|q-p_0|=0.
\]
In particular, for any sequence of solutions
$ \widehat p^{\Tdy}_{L(K),K}(x)
 \in\mathcal Z_{L(K),K}(x;P),\quad K\geq K_1,$
\[
 \widehat p^{\Tdy}_{L(K),K}(x)\mathop{\longrightarrow}^{K\to\infty}\frac1H,
 \qquad
 \widehat H^{\Tdy}_{L(K),K}(x)
 =\frac1{\widehat p^{\Tdy}_{L(K),K}(x)}
\mathop{\longrightarrow}^{K\to\infty} H.
\]
\item 
On the same event,
\begin{equation}\label{eq:fbm-block}
 \lim_{K\to\infty}
 \max_{I\in\Tdy^K}
 \left|\frac{D_{L(K),I}(p_0)}{|I|}-\mu_H\right|=0,
 \qquad
 \lim_{K\to\infty}S_K(p_0)=\mu_H,
\end{equation}
and
\begin{equation}\label{eq:fbm-W}
 \lim_{K\to\infty}
 \sup_{t\in[0,1]}
 \left|W(L(K),K,\Tdy,p_0,t,x)-t\right|=0.
\end{equation}
\end{enumerate}
The thresholds $K_1, K_2$ may depend on the sample path and on $a$ and $P$. All limits are taken with
$L(K)=\lceil aK\rceil$.
\end{theorem}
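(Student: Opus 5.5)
The plan is to reduce everything to Theorem~\ref{thm:consistency} with $\pi=\Tdy$ and $T=1$, by verifying Assumption~\ref{ass:local} almost surely through Lemma~\ref{lem:fgn} and a Borel--Cantelli argument. In fact I will prove the stronger block statement \eqref{eq:fbm-block}, which gives Assumption~\ref{ass:local}(3)--(4) with $c_0=c_1=\mu_H/2$ and $C_0=C_1=2\mu_H$. The remaining hypotheses are direct. Dyadic partitions are balanced with $B=1$. Almost every fBM path lies in $C^{H-}$ by Kolmogorov's criterion. The separation condition \eqref{eq:power-separation} holds because $\delta_K=2^{-\lceil aK\rceil}\le|\pi^K|^a$ with $a>1$. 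Theorem~\ref{thm:consistency} then yields part (1), including positivity of denominators and the bracketing $W_K(p_-)<1<W_K(p_+)$, and the consistency statements of part (2).

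\emph{Block concentration.} Fix a block $I=[(k-1)2^{-K},k2^{-K}]\in\Tdy^K$ and set $m=2^{L-K}$. By self-similarity and stationarity of increments, $(2^{LH}\Delta_Jx)_{J\subseteq I}$ has the law of unit-scale fractional Gaussian noise $(Z_j)_{j\le m}$. Since $Hp_0=1$,
\[
D_{L,I}(p_0)/|I| = m^{-1}\textstyle\sum_{j\le m}|Z_j|^{p_0}.
\]
Fix $\varepsilon\in(0,\mu_H)$ and apply \eqref{eq:fgn-concentration}. A union bound over the $2^K$ blocks gives
\[
\Prob\Bigl(\max_{I}\bigl|D_{L(K),I}(p_0)/|I|-\mu_H\bigr|>\varepsilon\Bigr)\le 2^{K+1}\exp\bigl(-c_\varepsilon 2^{2\kappa(L(K)-K)}\bigr).
\]
Because $L(K)-K\ge(a-1)K$, the exponent grows like $2^{2\kappa(a-1)K}$, so the right side is summable in $K$. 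Borel--Cantelli, applied along a countable sequence $\varepsilon_j\downarrow0$, gives the first limit in \eqref{eq:fbm-block}.

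For $S_K(p_0)$, apply Lemma~\ref{lem:fgn} to the single interval $[0,1]$ with $m=2^K$. This gives the bound $2\exp(-c_\varepsilon 2^{2\kappa K})$, again summable, so $S_K(p_0)\to\mu_H$ almost surely. Let $\Omega_0$ be the intersection of these events with the Hölder event. I expect this step to be the main obstacle. The difficulty is getting the union bound over $2^K$ blocks to beat the concentration rate, which is exactly where $a>1$ and the stretched-exponential tail $m^{2\kappa}$ are used, including in the long-memory case $H>1/2$.

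\emph{The rate $C(\omega)/K$.} Use the first inequality of \eqref{eq:quantitative-root-bound}: roots satisfy $|q-p_0|\le C_2/\log(1/\Lambda_K)$ with $\Lambda_K=M_{L(K)}2^{KH}$. Fix $\beta\in(H/a,H)$. On the Hölder event, $M_{L(K)}\le C_\beta(\omega)2^{-\beta aK}$, so
\[
\log(1/\Lambda_K)\ge (a\beta-H)K\log2-\log C_\beta(\omega)\ge cK
\]
for $K$ large. The constant $C_2$ depends only on $c_0,c_1,C_0,C_1,P$, hence only on $\mu_H$ and $P$. This gives $|q-p_0|\le C(\omega)/K$. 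The estimate for $|q^{-1}-H|$ follows from $|q^{-1}-p_0^{-1}|\le|q-p_0|/(p_-p_0)$.

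\emph{Critical normalization \eqref{eq:fbm-W}.} Write $r_K=\max_I\bigl|D_{L(K),I}(p_0)/(\mu_H|I|)-1\bigr|$, which tends to $0$. Term-by-term comparison gives
\[
\sup_t\bigl|W(L(K),K,\Tdy,p_0,t,x)-\mu_H^{-1}A_K^{(p_0)}(t)\bigr|\le \frac{r_K}{1-r_K}\,\mu_H^{-1}S_K(p_0)\to0.
\]
It then remains to show $\sup_t|A_K^{(p_0)}(t)-\mu_H t|\to0$. Since $A_K^{(p_0)}$ is monotone and $t\mapsto\mu_H t$ is continuous, it suffices to check convergence at the countably many dyadic points $t$. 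Each $A_K^{(p_0)}(t)$ is a block-type sum over $2^Kt$ unit increments, and the same Lemma~\ref{lem:fgn} plus Borel--Cantelli argument shows it converges to $\mu_H t$. Alternatively, one can invoke the $p_0$-roughness of fBM from \cite[Theorem~6.5]{Cont2026}, as in Corollary~\ref{cor:fbm-fixed-coarse}, which gives uniform convergence of $A_K^{(p_0)}$ to $\mu_H t$ directly. Either way, intersect the resulting probability-one event into $\Omega_0$.
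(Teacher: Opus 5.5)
Your proposal is correct and follows the paper's proof essentially step for step. Both use block concentration via Lemma~\ref{lem:fgn}, with a union bound over the $2^K$ blocks and Borel--Cantelli, summable because $L(K)-K\ge(a-1)K$. Both treat $S_K(p_0)$ as the case $m=2^K$, reduce parts (1)--(2) to Theorem~\ref{thm:consistency}, and prove \eqref{eq:fbm-W} by termwise comparison together with dyadic-point convergence and monotonicity of $A_K^{(p_0)}$. The only cosmetic difference is how you get the $C/K$ rate: you use the first inequality in \eqref{eq:quantitative-root-bound} with an explicit H\"older bound on $\Lambda_K$, whereas the paper reads it off the second inequality using $|\Tdy^K|=2^{-K}$.
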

\begin{proof}
Each coarse interval contains $m_K=2^{L(K)-K}$ fine cells.
Stationarity and self-similarity imply that, for each fixed coarse interval,
\[
 \frac{D_{L(K),I}(p_0)}{|I|}
 \ \stackrel{\mathrm{law}}{=}\ m_K^{-1}\sum_{j=1}^{m_K}|Z_j|^{p_0}.
\]
The equality uses $Hp_0=1$. There are $2^K$ blocks, so the union bound and
Lemma~\ref{lem:fgn} give, for large $K$,
\[
 \Prob\left(\max_I\left|D_{L(K),I}(p_0)/|I|-\mu_H\right|>
 \varepsilon\right)
 \leq2^{K+1}\exp(-c_\varepsilon 2^{2\kappa(L(K)-K)}).
\]
This bound is summable in $K$, since $L(K)-K\geq(a-1)K$.
The Borel--Cantelli lemma, applied to a sequence of positive
$\varepsilon$ tending to zero, proves the uniform block convergence.
Applying
Lemma~\ref{lem:fgn} with $m=2^K$ proves $S_K(p_0)\to\mu_H$ by the
same argument.

For completeness, the critical cumulative sums converge uniformly to
$\mu_H t$ as well. Fix a dyadic $t=j2^{-r}>0$. For $K\geq r$, the
$2^Kt$ increments before $t$, standardized by $2^{KH}$, have fractional
Gaussian noise covariance. Lemma~\ref{lem:fgn} gives almost sure
$A_K^{(p_0)}(t)\to\mu_Ht$. A countable intersection gives this convergence
at every dyadic $t$. Monotonicity, followed by a finite dyadic grid
approximation makes it uniform in $t$.
As $B^H\in C^{H-}$ a.s, Theorem~\ref{thm:consistency} applies.

Finally, let the maximum in \eqref{eq:fbm-block} be $\epsilon_K\to0$.
For large $K$, term-by-term comparison of denominators gives
\[
 \sup_t\left|W(L(K),K,\Tdy,p_0,t,B^H)-\mu_H^{-1}A_K^{(p_0)}(t)\right|
 \leq C\epsilon_K S_K(p_0)\longrightarrow0.
\]
Uniform convergence of the cumulative sums proves \eqref{eq:fbm-W}. The quantitative error bound follows from
Theorem~\ref{thm:consistency}, since $|\mathbb T^K|=2^{-K}$.
\end{proof}

\begin{remark}[Sampling levels and sample sizes]
The simple rule $L=2K$ uses $2^{2K}$ fine intervals and $2^K$ coarse intervals. Thus the fine sample size is the square of the coarse sample size. 
More generally, $K=\lfloor L/a\rfloor$ has the same consistency conclusion,
by the same proof.
\end{remark}
The general logarithmic error bound can be sharpened for fractional
Brownian motion. The quantitative estimate in Lemma~\ref{lem:fgn},
together with a uniform bound on a logarithmic derivative near the
critical exponent, gives a polynomial rate in the number of observations.

\begin{theorem}[Polynomial error bound for fractional Brownian motion]
\label{thm:fbm-polynomial-rate}
Let $B^H$ be standard fractional Brownian motion on $[0,1]$,
with $H\in(0,1)$ and $p_0=1/H$. Fix $a>1$ and
$P=[p_-,p_+]\subset(1,\infty)$ with $p_-<p_0<p_+$.
Set
\[
L(K)=\lceil aK\rceil,\qquad d_K=L(K)-K,\qquad
\kappa=\min(H,1-H),
\]
and, for $K\ge2$, define
\begin{equation}
\eta_K=
\sqrt{\log(K+1)}\,2^{-\kappa K}
+\sqrt K\,2^{-\kappa d_K}.
\label{eq:fbm-polynomial-eta}
\end{equation}
Almost surely, for all sufficiently large $K$, the root set
$Z_{L(K),K}(B^H;P)$ is nonempty and
\begin{equation}
\sup_{q\in Z_{L(K),K}(B^H;P)}
\left(|q-p_0|+|q^{-1}-H|\right)
\le C\frac{\eta_K}{d_K},
\label{eq:fbm-polynomial-root-rate}
\end{equation}
where $C<\infty$ may depend on the sample path.
Moreover, this root set contains exactly one element for all
sufficiently large $K$.

In particular, when $L=2K$, writing $N=2^{2K}$ for the number of fine
intervals, every root estimator satisfies almost surely
\begin{equation}
\left|\widehat p^{\mathbb T}_{2K,K}-p_0\right|
+\left|\widehat H^{\mathbb T}_{2K,K}-H\right|
=O\!\left(\frac{2^{-\kappa K}}{\sqrt K}\right)
=O\!\left(\frac{N^{-\kappa/2}}{\sqrt{\log N}}\right).
\label{eq:fbm-polynomial-sample-rate}
\end{equation}
\end{theorem}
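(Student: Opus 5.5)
The plan is a first-order expansion of $F_K(q):=\log W_K(q)$ around $p_0$, where $W_K(q)=W(L(K),K,\Tdy,q,1,B^H)$. Theorem~\ref{thm:fbm} already confines every root, almost surely and for large $K$, to a shrinking neighbourhood $U=[p_0-\varepsilon_0,p_0+\varepsilon_0]\subset P$. Two facts are then needed:
\begin{enumerate}
\item $|F_K(p_0)|\le C\eta_K$;
\item $F_K'(q)\ge c\,d_K$ uniformly for $q\in U$, with $c>0$ deterministic.
\end{enumerate}
Since $F_K$ then vanishes at every root $q$, the mean value theorem gives $c\,d_K|q-p_0|\le|F_K(q)-F_K(p_0)|=|F_K(p_0)|\le C\eta_K$. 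This is \eqref{eq:fbm-polynomial-root-rate}. The bound for $|q^{-1}-H|$ follows as in Theorem~\ref{thm:consistency}. Strict monotonicity of $F_K$ on $U$ gives uniqueness of the root. For $L=2K$ we have $d_K=K$ and $\eta_K\asymp\sqrt K\,2^{-\kappa K}$, which gives \eqref{eq:fbm-polynomial-sample-rate}.

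For step 1 I use the quantitative part of Lemma~\ref{lem:fgn} with $r=p_0$, for which $\lambda_H(p_0)=\kappa$.
\begin{itemize}
\item Blocks. Each $D_{L(K),I}(p_0)/|I|$ is a normalized sum of $m_K=2^{d_K}$ fGn powers. I take $u=A\sqrt K$, which is admissible since $\sqrt K\le 2^{\kappa d_K}$ for large $K$. A union bound over the $2^K$ blocks gives failure probability at most $2^{K+1}e^{-c A^2K}$, which is summable once $A$ is large. By Borel--Cantelli, almost surely
\[
\max_I\bigl|D_{L(K),I}(p_0)/|I|-\mu_H\bigr|\le C\sqrt K\,2^{-\kappa d_K}
\]
for large $K$.
\item Coarse sum. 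I take $m=2^K$ and $u=A\sqrt{\log(K+1)}$. The failure probability is then $2(K+1)^{-cA^2}$, summable for large $A$, so $|S_K(p_0)-\mu_H|\le C\sqrt{\log(K+1)}\,2^{-\kappa K}$ eventually.
\end{itemize}
Term-by-term comparison as in the proof of Theorem~\ref{thm:fbm} then gives $W_K(p_0)=\mu_H^{-1}S_K(p_0)\bigl(1+O(\sqrt K\,2^{-\kappa d_K})\bigr)=1+O(\eta_K)$, so $|F_K(p_0)|\le C\eta_K$.

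Step 2 is the main obstacle. Differentiating the statistic gives
\[
F_K'(q)=\sum_I w_I(q)\Bigl(\log|\Delta_Ix|-\textstyle\sum_{J\subseteq I}\nu_{I,J}(q)\log|\Delta_Jx|\Bigr),
\]
where $w_I(q)\propto|I|\,|\Delta_Ix|^q/D_{L,I}(q)$ are probability weights over blocks and $\nu_{I,J}(q)=|\Delta_Jx|^q/D_{L,I}(q)$ are probability weights over the fine cells of $I$. Next I substitute the normalized variables $\Delta_Ix=2^{-KH}Z_I$ and $\Delta_Jx=2^{-LH}Z_J$. Because both families of weights sum to one, the scale factors contribute exactly $(L-K)H\log2=d_KH\log 2$. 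Hence
\[
F_K'(q)=d_KH\log2+\sum_I w_I\log|Z_I|-\sum_I w_I\sum_J\nu_{I,J}\log|Z_J|.
\]
It remains to show that the two weighted log-averages are $O(\log K)$, almost surely and uniformly on $U$.
\begin{itemize}
\item Upper bounds. Gaussian maxima over at most $2^{aK+1}$ standard normals give $\max|Z|\le C\sqrt K$ eventually, by Borel--Cantelli. So each log-average is at most $\tfrac12\log K+C$.
\item Lower bounds. The function $|z|^q\log|z|$ is bounded below by $-1/(eq)$, so small increments carry negligible weight. For example, $\sum_J\nu_{I,J}\log|Z_J|\ge -m_K/(e q\sum_J|Z_J|^q)$. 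The denominator is of order $m_K$ uniformly on $U$: by Step 1 it is close to $m_K\mu_H$ at $q=p_0$, and it can be compared to $q=p_0$ through $|z|^q\ge|z|^{p_0}(\max|Z|)^{q-p_0}$-type bounds with $\varepsilon_0$ small.
\item Coarse average. The argument is the same, using $\sum_I|Z_I|^q/2^K$, which is bounded below near $p_0$ via $S_K(p_0)\to\mu_H$.
\end{itemize}
Since $d_K\ge(a-1)K$, the $O(\log K)$ corrections are $o(d_K)$, so $F_K'\ge\tfrac12H\log2\,d_K$ on $U$ for large $K$.

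If the lower-bound comparison for $q$ slightly off $p_0$ proves delicate, I would first try applying Lemma~\ref{lem:fgn} at the two endpoints $r=p_0\pm\varepsilon_0$, which need $\lambda_H(r)>0$. Failing that, I would use monotone interpolation in $q$ via Hölder's inequality, which gives $\log$-convexity of $q\mapsto\sum_J|Z_J|^q$.
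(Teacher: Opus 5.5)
Your argument is correct, and its skeleton matches the paper's. Your split $F_K'=d_KH\log2+(\text{weighted log-averages})$ is the derivative of the paper's identity $W_K(q)=2^{d_K(Hq-1)}R_K(q)$, and your two log-averages together are exactly $(\log R_K)'(q)$. Step~1 uses the same applications of Lemma~\ref{lem:fgn}: $u=A\sqrt{\log(K+1)}$ for the coarse vector, and $u=A\sqrt K$ with a union bound for the blocks. The root bound and uniqueness then follow by the same mean-value and monotonicity argument.

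The genuine difference is how the slope correction $(\log R_K)'$ is controlled. The paper applies Lemma~\ref{lem:fgn} at two auxiliary exponents $r_\pm$ near $p_0$ with $\lambda_H(r_\pm)>0$. Monotonicity of normalized $\ell^r$ norms then gives two-sided $O(1)$ bounds on all $A_K(q)$ and $A^f_{K,i}(q)$ on $Q$. Combined with $z^q|\log z|\le C(z^{r_-}+z^{r_+})$, this yields $|(\log R_K)'|\le C$. You instead use only concentration at $p_0$, the Gaussian maximum bound $\max|Z|\le C\sqrt K$, and the one-sided inequality $z^q\log z\ge-1/(eq)$. You aim only for $(\log R_K)'\ge-o(d_K)$, which is all the mean value theorem and uniqueness need. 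Your route avoids the auxiliary exponents; the paper's gives a uniform $O(1)$ slope correction and the clean inequality $(Hd_K\log2-C)|q-p_0|\le C'\eta_K$.

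Three details in your Step~2 need adjusting, though none breaks the argument.

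First, the comparison $|z|^q\ge|z|^{p_0}(\max|Z|)^{q-p_0}$ holds only for $q\le p_0$. For $q\ge p_0$ the lower bound comes from the power-mean inequality $A(q)\ge A(p_0)^{q/p_0}$ (your fallback). The maximum then gives the opposite bound $A(q)\le A(p_0)(\max|Z|)^{q-p_0}$.

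Second, the coarse weights are $w_I\propto|Z_I|^q/A^f_I(q)$. So the lower bound on $\sum_Iw_I\log|Z_I|$ is
\[
-\frac{\max_IA^f_I(q)}{eq\,\min_IA^f_I(q)\,A_K(q)},
\]
which involves the block moments as well as $A_K(q)$. With the crude comparisons this quantity is of order $K^{\varepsilon_0}$, not $O(\log K)$ as you state. It is still $o(d_K)$ once $\varepsilon_0<1$, so $F_K'\ge\tfrac12H\log2\,d_K$ on $U$ still holds.

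Third, the lower bound on the fine log-average is unnecessary. It enters $F_K'$ with a minus sign, so only the upper bound $\log\max|Z|\le\tfrac12\log K+C$ is used.
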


\begin{proof}
 Let $\mu(r)$ and $\lambda_H(r)$ be as in
Lemma~\ref{lem:fgn}, and put $n_K=2^K$ and $m_K=2^{d_K}$.
For the coarse blocks $I_i\in\mathbb T_K$ and their fine cells
$J_{i,j}\in\mathbb T_{L(K)}$, define
\[
U_{K,i}=2^{KH}\Delta_{I_i}B^H,\qquad
V_{K,i,j}=2^{L(K)H}\Delta_{J_{i,j}}B^H,
\]
\[
A_K(q)=\frac1{n_K}\sum_{i=1}^{n_K}|U_{K,i}|^q,
\qquad
A^f_{K,i}(q)=\frac1{m_K}\sum_{j=1}^{m_K}|V_{K,i,j}|^q.
\]
By stationary increments and self-similarity, each of these moment
averages has the law of an average of unit-scale fractional Gaussian
noise. Independence between the different averages is not required.
Direct substitution in the definition of $W$ gives the exact identity
\begin{equation}
W_K(q):=W(L(K),K,\mathbb T,q,1,B^H)
=2^{d_K(Hq-1)}R_K(q),\qquad
R_K(q)=\frac1{n_K}\sum_{i=1}^{n_K}
\frac{|U_{K,i}|^q}{A^f_{K,i}(q)}.
\label{eq:fbm-polynomial-factorization}
\end{equation}
We apply the   estimate in Lemma~\ref{lem:fgn}
at $r=p_0$, with $u=A\sqrt{\log(K+1)}$ and $\lambda_H(p_0)=\kappa$, to the
coarse vector.
For $A$ sufficiently large the resulting probabilities are summable.
For each fine block take $u=A\sqrt K$ and use a union bound over the
$2^K$ blocks. The resulting bound is
$2^{K+1}\exp(-cA^2K)$, also summable after increasing $A$.
The restrictions on $u$ in the lemma hold for large $K$ because
$d_K\ge(a-1)K$.
Borel--Cantelli gives, with $\mu_H=\mu(p_0)$,
\begin{align}
|A_K(p_0)-\mu_H|
&=O\!\left(\sqrt{\log(K+1)}\,2^{-\kappa K}\right),
\label{eq:fbm-polynomial-coarse-error}\\
\max_i|A^f_{K,i}(p_0)-\mu_H|
&=O\!\left(\sqrt K\,2^{-\kappa d_K}\right).
\label{eq:fbm-polynomial-fine-error}
\end{align}
Since $Hp_0=1$, these are precisely the standardized coarse and local
critical power sums:
$S_K(p_0)=A_K(p_0)$ and
$D_{L(K),I_i}(p_0)/|I_i|=A^f_{K,i}(p_0)$.
Together with the almost-sure $C^{H-}$ regularity of fBM and dyadic
balance, they verify the hypotheses of
Theorem~\ref{thm:consistency}. In particular, roots exist for large
$K$, and every root in $P$ converges to $p_0$.
For large $K$, all critical fine moments are at least $\mu_H/2$. Thus
\[
\left|R_K(p_0)-\frac{A_K(p_0)}{\mu_H}\right|
\le \frac{2A_K(p_0)}{\mu_H^2}
\max_i|A^f_{K,i}(p_0)-\mu_H|.
\]
Equations~\eqref{eq:fbm-polynomial-coarse-error}--
\eqref{eq:fbm-polynomial-fine-error} imply
\begin{equation}
R_K(p_0)=1+O(\eta_K),\qquad
|\log R_K(p_0)|=O(\eta_K).
\label{eq:fbm-polynomial-critical-remainder}
\end{equation}
Since $\lambda_H$ is continuous and
$\lambda_H(p_0)=\kappa>0$, choose
\[
1<r_-<q_-<p_0<q_+<r_+,
\qquad Q=[q_-,q_+]\subset\operatorname{int}P,
\]
so close to $p_0$ that $\lambda_H(r_-),\lambda_H(r_+)>0$.
For either fixed exponent $r=r_-$ or $r=r_+$, apply
Lemma~\ref{lem:fgn} with
\[
 u=b_r m^{\lambda_H(r)},\qquad
 b_r=\min\{1,\mu(r)/(4C_{H,r})\}.
\]
For sufficiently large $m$, this choice is admissible and gives
\[
\Prob\left(
\left|m^{-1}\sum_{j=1}^m|Z_j|^r-\mu(r)\right|>\mu(r)/2
\right)\le 2\exp(-c_rm^{2\lambda_H(r)}).
\]
Applying this at $m=n_K,m_K$, using a union bound over fine blocks
and Borel--Cantelli, shows that, for all sufficiently large $K$,
all the moments
$A_K(r_\pm)$ and $A^f_{K,i}(r_\pm)$ lie between positive constants,
uniformly in $i$ and $K$.

Monotonicity of normalized $\ell^r$ norms therefore gives
\begin{equation}
0<c\le A_K(q),\ A^f_{K,i}(q)\le C,
\qquad q\in Q,
\label{eq:fbm-polynomial-local-moment-bounds}
\end{equation}
uniformly in $i,K$. Also, with $0^q\log0$ interpreted as zero,
\[
\sup_{q\in Q}z^q|\log z|
\le C(z^{r_-}+z^{r_+}),\qquad z\ge0.
\]
It follows that
\[
\sup_i\sup_{q\in Q}|(A^f_{K,i})'(q)|\le C,
\qquad
\sup_{q\in Q}\frac1{n_K}\sum_i
|U_{K,i}|^q\bigl|\log|U_{K,i}|\bigr|\le C.
\]
Differentiate the finite sum defining $R_K$:
\[
R'_K(q)=\frac1{n_K}\sum_i
\left(
\frac{|U_{K,i}|^q\log|U_{K,i}|}{A^f_{K,i}(q)}
-\frac{|U_{K,i}|^q(A^f_{K,i})'(q)}{(A^f_{K,i}(q))^2}
\right).
\]
The preceding bounds show that $|R'_K(q)|\le C$ uniformly on $Q$.
Moreover, \eqref{eq:fbm-polynomial-local-moment-bounds} gives
$R_K(q)\ge c'>0$. Hence
\begin{equation}
\sup_{q\in Q}|(\log R_K)'(q)|\le C.
\label{eq:fbm-polynomial-log-slope}
\end{equation}

\smallskip\noindent
\emph{Root error and uniqueness.}
By the preliminary consistency argument, every root in $P$ belongs
to $Q$ for large $K$. At such a root $q$, the logarithm of
\eqref{eq:fbm-polynomial-factorization} yields
\[
0=Hd_K(q-p_0)\log2+\log R_K(q).
\]
Using \eqref{eq:fbm-polynomial-critical-remainder} and
\eqref{eq:fbm-polynomial-log-slope}, we obtain
\[
\bigl(Hd_K\log2-C\bigr)|q-p_0|
\le |\log R_K(p_0)|\le C'\eta_K.
\]
Since $d_K\to\infty$, this proves the asserted bound for $q-p_0$,
uniformly over roots. The reciprocal bound follows from
$|q^{-1}-H|=|q-p_0|/(qp_0)\le |q-p_0|/(p_-p_0)$.

Finally,
\[
\frac{d}{dq}\log W_K(q)=Hd_K\log2+(\log R_K)'(q)>0,
\qquad q\in Q,
\]
for large $K$. All roots in $P$ lie in $Q$, so the root in $P$ is
then unique. If $L=2K$, the fine-block term in $\eta_K$ dominates
the coarse term, and $N=2^{2K}$ gives
\eqref{eq:fbm-polynomial-sample-rate}.
\end{proof}
For $L\sim aK$, the two terms in
\eqref{eq:fbm-polynomial-root-rate} have polynomial exponents
$\kappa/a$ and $\kappa(1-1/a)$ in the fine sample size $N=2^L$,
up to logarithmic factors. Balancing these exponents gives $a=2$.
This explains $K\sim L/2$ as a balance in this upper bound.
The rate is an almost-sure upper bound obtained from Gaussian norm
concentration.  No quantitative
concentration estimate uniform in $q$ is needed: the critical
moment error and the bounded logarithmic derivative are sufficient.

\section{Stability under transformations and smooth perturbations}\label{sec:stability}
The assumptions in Theorem \ref{thm:consistency} survive smooth perturbations and smooth nonlinear transformations. This enables us to extend the range of applicability of the above results. 

\begin{proposition}\label{prop:stability}
Let $x$ satisfy Assumption~\ref{ass:local} with exponent $p_0=1/H$.
Then in the following cases $y$ also satisfies  Assumption~\ref{ass:local} for the same $p_0$, $\pi$, and $L(K)$:
\begin{enumerate}
\item $y(t)=x(t)+v(t)$ where $v\in C^\gamma([0,T])$ with $H<\gamma\leq1$;
\item  $y(t)=a(t)x(t)+b(t)$, where $a,b\in C^1([0,T])$
and $\min_t|a(t)|>0$;
\item $y(t)=F(x(t))$, where $F$ is continuously differentiable on an open interval
containing the range of $x$, and $\inf_{z\in x([0,T])}|F'(z)|>0$.
\end{enumerate}
The same statement holds with $C^H$ in place of $C^{H-}$ when $x\in C^H$.
Consequently Theorem \ref{thm:consistency} remains applicable to $y$.
\end{proposition}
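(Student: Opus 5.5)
The plan is to verify the four items of Assumption~\ref{ass:local} for $y$ directly, treating (1) as the basic perturbation lemma and reducing (2) and (3) to it via a local linearization. Item 1 (balance) concerns only $\pi$ and is unchanged. Item 2 follows in each case from elementary facts. In case (1), $C^{H-}+C^\gamma\subset C^{H-}$ because $\gamma>H$. In case (2), products and sums with $C^1$ functions preserve $C^\alpha$ for $\alpha<1$. In case (3), a $C^1$ map is Lipschitz on the compact range $x([0,T])$. So the content lies in the two-sided critical bounds, items 3 and 4.

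\textbf{Case (1).} I will use Minkowski's inequality in $\ell^{p_0}$, block by block. For $I\in\pi^K$,
\[
\bigl|D_{L,I}(p_0;y)^{1/p_0}-D_{L,I}(p_0;x)^{1/p_0}\bigr|\le D_{L,I}(p_0;v)^{1/p_0}.
\]
Since $v\in C^\gamma$ and the partition is balanced, the number of fine cells in $I$ is at most $B|I|/\delta_K$ up to constants. Hence
\[
D_{L,I}(p_0;v)\le C\,|I|\,\delta_K^{\gamma p_0-1}=|I|\,o(1),
\]
because $\gamma p_0>1$ and $\delta_K\to0$. Dividing by $|I|$, the normalized perturbation is uniformly $o(1)$. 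The bounds $c_1,C_1$ for $x$ therefore transfer to $y$ with, say, $c_1/2$ and $2C_1$ after adjusting the $p_0$-th roots. The coarse sums are handled the same way: $S_K(p_0;v)\le CT|\pi^K|^{\gamma p_0-1}\to0$, so $c_0/2\le S_K(p_0;y)\le 2C_0$ for large $K$.

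\textbf{Cases (2) and (3).} I will write each increment as a linear term plus a smaller error. For (3), the mean value theorem gives $\Delta_J y=F'(\xi_J)\Delta_Jx$, with $\lambda\le|F'(\xi_J)|\le\Lambda$ on a compact neighbourhood of the range. Hence every fine and coarse $p_0$-power sum of $y$ lies between $\lambda^{p_0}$ and $\Lambda^{p_0}$ times the corresponding sum for $x$, and items 3 and 4 follow immediately without any perturbation estimate. For (2), write
\[
\Delta_J y=a(u)\Delta_Jx+\bigl(x(v)\Delta_J a+\Delta_J b\bigr).
\]
The first term is comparable to $\Delta_Jx$ because $\min|a|>0$ and $a$ is bounded. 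The second term is $O(|J|)$, since $x$ is bounded and $a,b\in C^1$. Its $p_0$-power sum over a block is therefore $O(|I|\delta_K^{p_0-1})=|I|\,o(1)$. The Minkowski argument of case (1), with $\gamma=1$, then gives the two-sided bounds. Alternatively, one can note that $a(u)$ varies by $O(|I|)$ across a block; either route works.

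\textbf{Hölder variant.} When $x\in C^H$, the same computations show $y\in C^H$: a $C^\gamma$ or $C^1$ perturbation, or composition with a Lipschitz map, preserves $C^H$. Corollary~\ref{cor:exact-holder} then applies in that setting.

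The main obstacle is mild. The key point is that the perturbation bounds must hold uniformly relative to $|I|$ over all blocks, not merely after summing. This is where balance enters, through the count of fine cells per block, together with the strict inequality $\gamma p_0>1$. I will also check that no condition relates the sizes of $\delta_K$ and $|\pi^K|$ beyond $\delta_K\to0$, which holds because the errors scale with $\delta_K$ inside each block. Once the four items are verified, Theorem~\ref{thm:consistency} applies to $y$ verbatim.
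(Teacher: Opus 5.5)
Your proof is correct and follows the same route as the paper. Case (1) uses the triangle and reverse triangle inequalities in $\ell^{p_0}$ with the bounds $C\delta_K^{\gamma p_0-1}|I|$ per block and $CT|\pi^K|^{\gamma p_0-1}$ on the coarse grid, case (2) linearizes $\Delta_J y$ and applies the same perturbation estimate, and case (3) uses the mean-value comparison $c|\Delta_Jx|\le|\Delta_J y|\le C|\Delta_Jx|$. Freezing $a$ at the left endpoint of each fine cell, rather than of the block as the paper does, slightly simplifies case (2) by removing the $(a(s)-a(u))\Delta_Jx$ term; also, balance is not needed for the per-block bound, since $\sum_{J\subseteq I}|J|^{\gamma p_0}\le\delta_K^{\gamma p_0-1}|I|$ holds directly.
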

\begin{proof}
Write $p=p_0$, so $1/p=H$. For a block $I$, the $\ell_p$ norm of the
fine increments of $v\in C^\gamma$ is bounded by
\[
 \left(\sum_{J\subseteq I}|\Delta_Jv|^p\right)^{1/p}
 \leq C\left(\delta_K^{\gamma p-1}|I|\right)^{1/p}
 =C\delta_K^{\gamma-H}|I|^H.
\]
The corresponding norm for $x$ lies between $c_1^{1/p}|I|^H$ and
$C_1^{1/p}|I|^H$. The triangle and reverse triangle inequalities therefore
preserve the two-sided local bounds for $x+v$. On the whole coarse grid,
the norm of the perturbation is at most
$CT^H|\pi^K|^{\gamma-H}\to0$, whereas the norm for $x$ is bounded above and
away from zero. This proves the coarse bounds in (1).

For (2), put $y=ax+b$ and fix $I=[u,v]$. On a fine cell $J=[s,t]\subseteq I$,
\[
 \Delta_Jy=a(u)\Delta_Jx+e_J,\qquad
 e_J=(a(s)-a(u))\Delta_Jx+(a(t)-a(s))x(t)+\Delta_Jb.
\]
Since $a,b$ are Lipschitz and $x$ bounded, the $\ell_p$ norm of the error
over the fine cells is at most
\[
 C|I|D_{L(K),I}(p)^{1/p}
 +C\left(\sum_{J\subseteq I}|J|^p\right)^{1/p}
 \leq C(|\pi^K|+\delta_K^{1-H})|I|^H.
\]
This is $o(|I|^H)$ uniformly in $I$. The main term has norm
$|a(u)|D_{L(K),I}(p)^{1/p}$, bounded above and below by constant
multiples of $|I|^H$. The local bounds follow.
For the coarse sums, use
$\Delta_Iy=a(u)\Delta_Ix+(a(v)-a(u))x(v)+\Delta_Ib$.
The last two terms have global $\ell_p$ norm $O(|\pi^K|^{1-H})$;
the first has norm between
$(\min|a|)S_K(p)^{1/p}$ and $(\max|a|)S_K(p)^{1/p}$.
This proves the coarse bounds.

For (3), the range $x([0,T])$ is a compact interval. The mean value theorem
gives constants $0<c\leq C<\infty$ such that
\[
 c|\Delta_Jx|\leq|\Delta_J(F\circ x)|\leq C|\Delta_Jx|
\]
for every interval $J$. Summation immediately proves both critical bounds.
All three operations preserve $C^{H-}$, or $C^H$ in the exact case:
use the increment bounds above, $\gamma>H$, and the bounded derivative
of $F$ on the compact range. This verifies the remaining assumption.
\end{proof}
\begin{remark}[Robustness to small measurement noise]
\label{rem:measurement-noise}
The increment-norm argument allows for measurement errors.
Under the assumptions of Theorem~\ref{thm:consistency}, if we observe
\[
y_K(t)=x(t)+e_K(t),\qquad t\in\pi^{L(K)},
\]
where
\[
\varepsilon_K
:=\max_{t\in\pi^{L(K)}}|e_K(t)|
=o(\delta_K^H),\qquad H=1/p_0
\]
then without any regularity or probabilistic assumptions on $e_K$, for large $K$ the estimating equation computed
from $y_K$ has a root in $P$, and every such root
$\widehat p_K^{\,e}$ satisfies
\[
|\widehat p_K^{\,e}-p_0|
=O\!\left(\frac{1}{\log(1/|\pi^K|)}\right).
\]
In particular, the noisy observations still yield consistent
estimates of the variation and roughness indices of $x$.
Indeed, balance implies that a coarse block $I$ contains at most
$B|I|/\delta_K$ fine intervals. Hence
\[
\left(
\sum_{\substack{J\in\pi^{L(K)}\\J\subset I}}
|\Delta_J e_K|^{p_0}
\right)^{1/p_0}
\le
2B^H\varepsilon_K\delta_K^{-H}|I|^H
=o(|I|^H),
\]
uniformly in $I$. The triangle and reverse triangle inequalities
therefore preserve the two-sided local critical power-sum bounds.
Likewise,
\[
\left(\sum_{I\in\pi^K}|\Delta_I e_K|^{p_0}\right)^{1/p_0}
\le 2\varepsilon_KN_K^H=o(1),
\]
so the coarse critical bounds are preserved.
Writing
$\widetilde M_K=\max_{J\in\pi^{L(K)}}|\Delta_J y_K|$,
we have $\widetilde M_K\le M_{L(K)}+2\varepsilon_K$.
Choose $\beta\in(H/a,H)$, where $a>1$ is the exponent in
the scale-separation assumption. The original H\"older bound
and $\varepsilon_K=o(\delta_K^H)$ give
\[
\widetilde M_KN_K^H
\le C\delta_K^\beta|\pi^K|^{-H}
\le C|\pi^K|^{a\beta-H}\longrightarrow0.
\]
The finite-sample comparisons in the proof of
Theorem~\ref{thm:consistency} now give root existence and the
stated error bound, uniformly over all roots in $P$.

This permits measurement errors vanishing uniformly relative
to $\delta_K^H$; it does not cover fixed-amplitude measurement noise.
\end{remark}
\begin{corollary}[Fractional Ornstein--Uhlenbeck signals]\label{cor:fou}
Let $\theta>0$, $\mu\in\R$, $\sigma\ne0$, and let $X$ be the continuous
solution of the additive integral equation
\begin{equation}\label{eq:fou}
 X_t=X_0+\theta\int_0^t(\mu-X_s)\,\dd s+\sigma B^H_t,
 \qquad 0\leq t\leq1.
\end{equation}
Under the dyadic sampling rule of Theorem~\ref{thm:fbm}, every roughness estimator
$\widehat H^{\mathbb T}_{L(K),K}(X)$
is strongly consistent for $H$.
The same holds for $F(X)$ under condition (3) of
Proposition~\ref{prop:stability}, including $F(z)=e^z$.
\end{corollary}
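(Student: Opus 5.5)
The plan is to write $X$ as a smooth perturbation of a scaled fractional Brownian motion and then use the stability results. Solving the linear equation \eqref{eq:fou} pathwise by variation of constants gives
\[
X_t=\sigma B^H_t+v(t),\qquad v(t)=X_0+\theta\int_0^t(\mu-X_s)\,\dd s .
\]
Since $X$ is continuous on $[0,1]$, the integrand is bounded. Hence $v$ is Lipschitz, so $v\in C^1\subset C^\gamma$ with $\gamma=1>H$. No explicit solution formula is needed; only continuity of $X$ is used, and that is part of the hypothesis.

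Next, on the probability-one event $\Omega_0$ of Theorem~\ref{thm:fbm}, the proof of that theorem shows that $x=B^H(\omega)$ satisfies Assumption~\ref{ass:local}:
\begin{itemize}
\item dyadic partitions are balanced;
\item $x\in C^{H-}$ almost surely (intersect $\Omega_0$ with this event if needed);
\item $S_K(p_0)\to\mu_H>0$;
\item by \eqref{eq:fbm-block}, $D_{L(K),I}(p_0)/|I|\to\mu_H$ uniformly in $I$, which gives the two-sided local bounds with, say, $c_1=\mu_H/2$ and $C_1=2\mu_H$.
\end{itemize}
Multiplying by the constant $\sigma\neq0$ preserves the assumption. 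One way to see this is case (2) of Proposition~\ref{prop:stability} with $a\equiv\sigma$ and $b\equiv0$; more directly, it rescales all power sums by $|\sigma|^{p_0}$. Case (1) of Proposition~\ref{prop:stability}, with the Lipschitz perturbation $v$, then shows that $X(\omega)$ satisfies Assumption~\ref{ass:local} with the same $p_0$ and $L(K)=\lceil aK\rceil$.

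The scale-separation condition \eqref{eq:power-separation} holds because $\delta_K=2^{-\lceil aK\rceil}\le|\pi^K|^a$ with $a>1$. Theorem~\ref{thm:consistency} therefore applies pathwise on this event. Every root lies in $(p_-,p_+)$ and converges to $p_0$, so every $\widehat H^{\mathbb T}_{L(K),K}(X)\to H$ almost surely, which is strong consistency.

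For $F(X)$ we apply case (3) of Proposition~\ref{prop:stability} to the path $X(\omega)$, which we now know satisfies Assumption~\ref{ass:local}. The range $X([0,1])$ is compact, so the hypothesis $\inf|F'|>0$ on it is what case (3) needs. For $F(z)=e^z$ we have $F'=e^z\ge e^{\min X}>0$ on the range, so the condition holds. Theorem~\ref{thm:consistency} then gives the conclusion as before.

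The main point to check carefully is that the perturbation $v$ depends on $\omega$. Proposition~\ref{prop:stability} is deterministic and applied path by path, so this causes no issue. The constants in the assumption may depend on $\omega$, and Theorem~\ref{thm:consistency} allows this.
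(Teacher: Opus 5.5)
Your proposal is correct and follows the paper's proof. You write $X=\sigma B^H+v$ with $v(t)=X_0+\theta\int_0^t(\mu-X_s)\,\dd s$ a $C^1$ path, obtain Assumption~\ref{ass:local} for $B^H$ on the full-measure event of Theorem~\ref{thm:fbm}, transfer it pathwise to $X$ and then to $F(X)$ via Proposition~\ref{prop:stability}, and conclude with Theorem~\ref{thm:consistency}. The phrase ``variation of constants'' is a misnomer, since the decomposition is only a rearrangement of \eqref{eq:fou}, but nothing in the argument depends on it.
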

\begin{proof}
Equation \eqref{eq:fou} has a unique strong solution \cite{Cheridito2003}. Denote the continuously differentiable path $v(t)=X_0+\theta\int_0^t(\mu-X_s)\dd s$,
so $X=\sigma B^H+v$. Apply Theorem~\ref{thm:fbm} and
Proposition~\ref{prop:stability}. The exponential derivative is strictly
positive and has a positive minimum along each path.
\end{proof}
We remark that an extension to general multiplicative fractional stochastic differential
equations would require separate integration and regularity assumptions.

\section{Numerical illustrations}\label{sec.simulation}
\subsection{Dyadic Faber--Schauder series}
We first illustrate some of the results using exact calculation of $p$-th variation along dyadic partition at level $n$ using the Faber--Schauder basis \cite{Misura2019}. These values are computed exactly; no truncation error is involved.
\begin{table}[tb]
\centering
\small
\setlength{\tabcolsep}{4pt}
\begin{tabular}{crrrrr}
\toprule
$H$ & All $+$, $L=10$ & All $+$, $L=14$ & All $+$, $L=18$ & Alternating, $L=18$ & Random, $L=18$\\
\midrule
0.10 & 0.097402 & 0.099845 & 0.099990 & 0.099990 & 0.099990\\
0.25 & 0.247079 & 0.249735 & 0.249974 & 0.249974 & 0.249974\\
0.50 & 0.490098 & 0.498153 & 0.499636 & 0.499636 & 0.499636\\
0.75 & 0.709271 & 0.733637 & 0.743216 & 0.743216 & 0.743216\\
0.90 & 0.819433 & 0.850751 & 0.867864 & 0.867864 & 0.867864\\
\bottomrule
\end{tabular}

\caption{Estimated $\widehat H$ for signed Takagi-Landsberg functions with
$K=L/2$. ``Alternating'' means $\theta_{m,k}=(-1)^k$; 
}
\label{tab:signed}
\end{table}
Table~\ref{tab:signed} records five values of $H$ and, for each $H$ three levels of $L$. The estimates approach $H$ as the levels $L$ increase. Convergence is visibly slower when $H$ is close to one, in line with the factor $2^{-K(1-H)}$ in Theorem~\ref{thm:signed-rate} for the choice of $L=2K$. The agreement between sign patterns is explained more precisely in
Remark~\ref{rem:finite-sign-invariance}.

\begin{figure}[htbp]
\centering
\includegraphics[width=\textwidth]{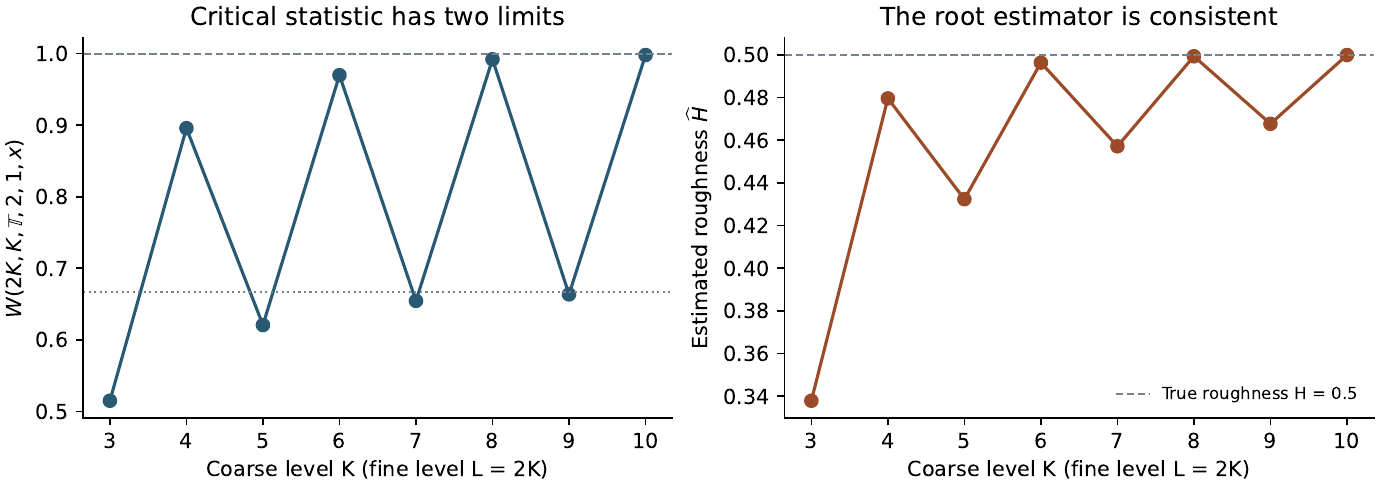}
\caption{Example~\ref{ex:oscillating}, with $K=3,\ldots,10$ and $L=2K$.
Left: the statistic at the critical exponent approaches different limits
along even and odd $K$. Right: the root-based estimate approaches $H=1/2$.
The dashed and dotted lines on the left are $1$ and $2/3$.}
\label{fig:oscillating}
\end{figure}
Figure~\ref{fig:oscillating} illustrates why critical normalization is not necessary for consistency. The odd-$K$ estimates converge much more slowly
than the even-$K$ estimates, although both are consistent. For example,
at $K=9$ the estimated roughness is $0.4676$, while at $K=10$ it is
$0.4998$. These results are compatible with the general
$O((L-K)^{-1})$ bound in Theorem~\ref{thm:schauder}.

\subsection{Fractional Brownian paths}\label{sec:fbm-mc}
For  $H\in\{0.10,0.25,0.50,0.75,0.90\}$ we simulate $R=1000$ independent fBM discretely sampled paths on a grid of $n=2^{16}$ equidistant points on $[0,1]$, using the FFT-based circulant embedding \cite{WoodChan1994}. 
Within each path, coarser observations are obtained by summing the
finest increments. 
The primary experiment uses $L\in\{10,12,14,16\}$ and $K=L/2$, the case
$a=2$ of Theorem~\ref{thm:fbm}.

For each configuration, let $M_s$ be the number of successful bracketed
root searches and $e_r=\widehat H_r-H$ their errors. We report
\[
\widehat{\mathrm{Bias}}=\frac1{M_s}\sum_{r=1}^{M_s}e_r,
 \qquad
\widehat{\mathrm{RMSE}}=\left(\frac1{M_s}\sum_{r=1}^{M_s}e_r^2\right)^{1/2},
\]
the sample standard deviation $s$ of $\widehat H_r$, and the Monte Carlo
standard error $s/\sqrt{M_s}$ of the estimated bias. These statistics are
conditional on a bracketing test when $M_s<R$; the failure count is
reported separately. Approximate Monte Carlo error bars for RMSE use the
delta-method standard error
$$\frac{s_{e^2}}{2 \ \widehat{\mathrm{RMSE}}\ \sqrt{M_s}}$$ where $s_{e^2}$ is the sample
standard deviation of the squared errors.
\begin{table}[htbp]
\centering
\small
\setlength{\tabcolsep}{5pt}
\begin{tabular}{rrrrrrr}
\toprule
$H$ & $L$ & Bias & MCSE(bias) & SD & RMSE & Failures\\
\midrule
0.10 & 14 & +0.0053 & 0.0007 & 0.0215 & 0.0221 & 7/1000\\
0.10 & 16 & +0.0028 & 0.0005 & 0.0158 & 0.0160 & 0/1000\\
\addlinespace
0.25 & 10 & +0.0044 & 0.0013 & 0.0416 & 0.0418 & 0/1000\\
0.25 & 12 & +0.0021 & 0.0008 & 0.0260 & 0.0260 & 0/1000\\
0.25 & 14 & +0.0012 & 0.0005 & 0.0164 & 0.0164 & 0/1000\\
0.25 & 16 & +0.0005 & 0.0003 & 0.0096 & 0.0096 & 0/1000\\
\addlinespace
0.50 & 10 & -0.0046 & 0.0011 & 0.0344 & 0.0346 & 0/1000\\
0.50 & 12 & -0.0025 & 0.0007 & 0.0210 & 0.0211 & 0/1000\\
0.50 & 14 & -0.0011 & 0.0004 & 0.0129 & 0.0129 & 0/1000\\
0.50 & 16 & -0.0003 & 0.0002 & 0.0079 & 0.0079 & 0/1000\\
\addlinespace
0.75 & 10 & -0.0241 & 0.0013 & 0.0404 & 0.0470 & 0/1000\\
0.75 & 12 & -0.0156 & 0.0008 & 0.0263 & 0.0306 & 0/1000\\
0.75 & 14 & -0.0103 & 0.0006 & 0.0175 & 0.0203 & 0/1000\\
0.75 & 16 & -0.0066 & 0.0004 & 0.0116 & 0.0134 & 0/1000\\
\addlinespace
0.90 & 10 & -0.0403 & 0.0016 & 0.0516 & 0.0655 & 1/1000\\
0.90 & 12 & -0.0319 & 0.0012 & 0.0392 & 0.0505 & 1/1000\\
0.90 & 14 & -0.0257 & 0.0010 & 0.0309 & 0.0401 & 0/1000\\
0.90 & 16 & -0.0209 & 0.0008 & 0.0247 & 0.0324 & 0/1000\\
\bottomrule
\end{tabular}

\caption{Simulation results for fBM, with $1000$ paths per $H$
and $K=L/2$. Bias, standard deviation (SD), and root mean squared error
(RMSE) refer to $\widehat H$. MCSE(bias) is the Monte Carlo standard error
of the estimated bias. Error statistics use successful searches only;
the final column records endpoint-bracketing failures.}
\label{tab:fbm-mc}
\end{table}

\begin{figure}[htbp]
\centering
\includegraphics[width=\textwidth]{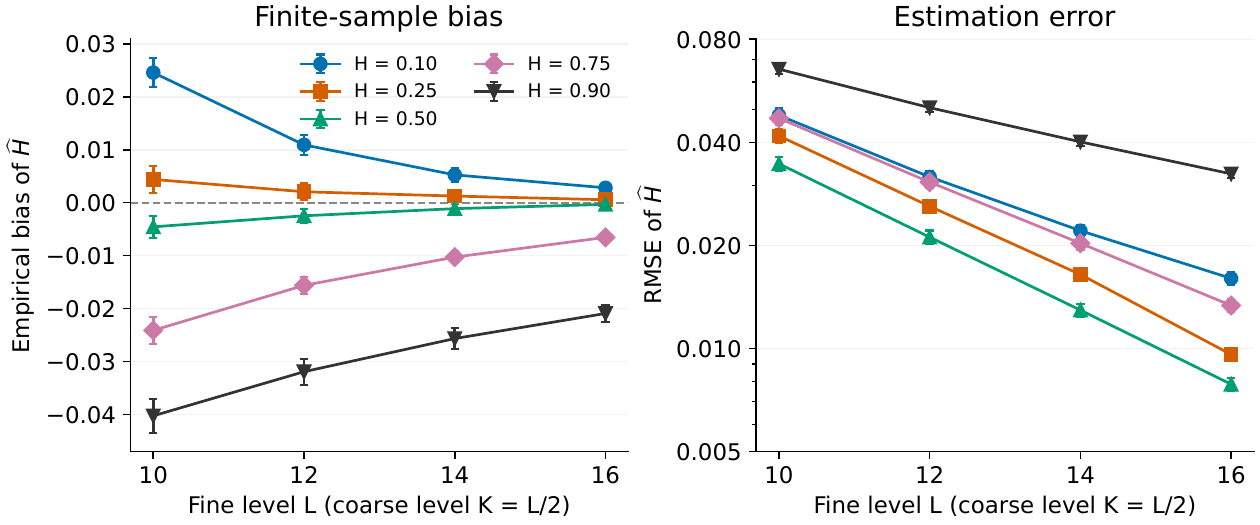}
\caption{Resolution experiment for fBM with $K=L/2$. Error bars are $1.96$ times the Monte Carlo standard errors. 
Statistics are conditional on successful bracketing; see
Table~\ref{tab:fbm-mc} for the corresponding failure counts.}
\label{fig:fbm-convergence}
\end{figure}

Table~\ref{tab:fbm-mc} and Figure~\ref{fig:fbm-convergence} show decreasing
empirical RMSE across the four resolutions for every tested $H$.
At $L=16$, all $1000$ searches succeed in each group, and the RMSEs are
$0.0160$, $0.0096$, $0.0079$, $0.0134$, and $0.0324$, respectively.
For $H=0.90$, a negative bias of approximately $-0.0209$ remains at this
resolution; this is  larger than its Monte Carlo standard error
$0.0008$.

\paragraph{Choice of the coarse level.}
A second experiment keeps $L=16$ fixed and uses
$K\in\{4,6,8,10,12\}$ on the same paths. Increasing $K$ gives more
coarse increments but fewer fine increments in each local denominator.
Figure~\ref{fig:fbm-allocation} shows that this tradeoff strongly affects
finite-sample error. For $H=0.10$, the RMSE is $0.0160$ at $K=8$, whereas
at $K=12$ the bias is $0.0752$ and the RMSE is $0.0754$; all these searches
succeed. At $K=4$ and $K=6$, however, $436$ and $111$ searches fail, so their
plotted error statistics describe only the successful subset.
For $H=0.50$, the RMSE decreases from $0.0079$ at $K=8$ to $0.0036$ at
$K=12$. Thus $K=L/2$ is a useful heuristic.

\begin{figure}[htbp]
\centering
\includegraphics[width=\textwidth]{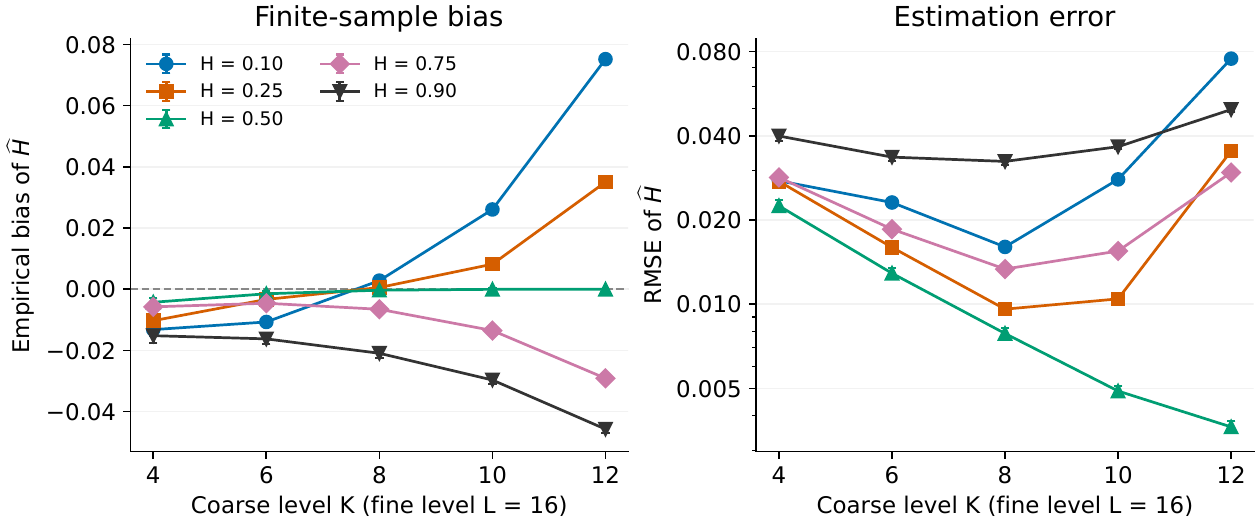}
\caption{Coarse-level sensitivity at fixed fine level $L=16$, using the
same $1000$ paths per $H$. Error bars have the same interpretation as in
Figure~\ref{fig:fbm-convergence}. For $H=0.10$, only $564$ and $889$ searches
succeed at $K=4$ and $K=6$; their displayed statistics are conditional on
that success. All other allocation configurations have $1000$ successes.}
\label{fig:fbm-allocation}
\end{figure}

\subsection{Stochastic heat equation with space-time white noise}
\label{subsec:heat-equation}
Stochastic PDEs provide many examples of processes with rough trajectories. 
Consider the stochastic heat equation on the real line,
\begin{equation}\label{eq:heat-spde}
 \partial_t u(t,z)=\tfrac12\partial_{zz}u(t,z)+\dot{\mathcal W}(t,z),
 \qquad u(0,z)=0,
\end{equation}
where $\dot{\mathcal W}$ is space--time white noise. Fix a spatial point
$z\in\mathbb R$ and set $X(t)=u(t,z)$ for $0\leq t\leq1$.
Swanson~\cite[Lemma~2.1 and Thm~2.3]{Swanson2007} established
$C^{1/4-}$ regularity and convergence of the quartic power sums to
$6t/\pi$, uniformly in time in $L^2$.
Along the dyadic partitions $\Tdy^n=\{j2^{-n}:0\leq j\leq2^n\}$,
Swanson's  moment estimates and Borel--Cantelli, followed by monotonicity of
the cumulative sums, give almost surely
\[
 X\in C^{1/4-},\qquad
 \sup_{0\leq t\leq1}|A_n^{(4)}(t)-c_4t|\longrightarrow0,\quad [X]_{\Tdy}^{(4)}(t)=c_4t
 \qquad c_4=\frac6\pi.
\]
The process $X$ is centered Gaussian with covariance
\begin{equation}\label{eq:heat-covariance}
 \E[X(s)X(t)]=\frac{1}{\sqrt{2\pi}}
       \bigl(\sqrt{s+t}-\sqrt{|t-s|}\bigr).
\end{equation}
To apply Theorem~\ref{thm:consistency} with an explicit sampling rule we need uniform control over shrinking coarse intervals:
\begin{proposition}[Consistency for the stochastic heat equation]
\label{prop:heat-consistency}
Fix $a>1$, set $L(K)=\lceil aK\rceil$, and let
$P=[p_-,p_+]\subset(1,\infty)$ with $p_-<4<p_+$.
The following statements hold almost surely:
\begin{equation}\label{eq:heat-block-limit}
 \max_{I\in\Tdy^K}
 \left|\frac{D_{L(K),I}(4)}{|I|}-c_4\right|\longrightarrow0,
\end{equation}
and
\begin{equation}\label{eq:heat-normalization}
 \sup_{0\leq t\leq1}|W(L(K),K,\Tdy,4,t,X)-t|\longrightarrow0.
\end{equation}
For all sufficiently large $K$, all denominators are positive and
\[
 W(L(K),K,\Tdy,p_-,1,X)<1<W(L(K),K,\Tdy,p_+,1,X).
\]
So the set $\mathcal Z_{L(K),K}(X;P)$ is almost-surely nonempty and contained in $(p_-,p_+)$.
There is a finite path-dependent constant $C$ such that, for all
sufficiently large $K$,
\begin{equation}\label{eq:heat-root-bound}
 \sup_{q\in \mathcal Z_{L(K),K}(X;P)}
       \bigl(|q-4|+|q^{-1}-\tfrac14|\bigr)\leq\frac{C}{K}.
\end{equation}
Thus every root selection almost-surely leads to a consistent estimator of $H^{\Tdy}(X)=1/4$.
\end{proposition}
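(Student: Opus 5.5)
The plan mirrors the proof of Theorem~\ref{thm:fbm}, with fractional Gaussian noise replaced by the increment field of $X$. The main new issue is that $X$ does not have stationary increments.

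\textbf{Decomposition of $X$.} We write $X=Y+R$, with
\[
Y(t)=\int_{\R}\int_0^t\bigl(p_{t-s}(z-y)-p_{-s}(z-y)\mathbf 1_{s<0}\bigr)\,\mathcal W(\dd s,\dd y)
\]
the two-sided stationary-increment version. Equivalently, $Y$ is a constant multiple of a fractional Brownian motion with $H=1/4$, since its increment variance is $|t-s|^{1/2}/\sqrt{2\pi}$. The remainder $R$ is independent of $Y$ and is $C^\infty$ on $(0,1]$. This is the classical decomposition used by Swanson. Its covariance can be read off from \eqref{eq:heat-covariance}: the term $\sqrt{s+t}$ is smooth away from $s=t=0$. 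The normalization is chosen so that $\E|Y(t)-Y(s)|^4=c_4|t-s|$. Indeed, $3\cdot(2\pi)^{-1}|t-s|=c_4|t-s|\cdot\pi/(2\pi^2)\cdots$; I would double-check that this constant matches $6/\pi$, using $\E|Y(t)-Y(s)|^2=\sqrt{2/\pi}\,|t-s|^{1/2}$.

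\textbf{Block estimate for $Y$.} Lemma~\ref{lem:fgn} with $H=1/4$ and $r=4$ gives $\lambda=1/4$. The union bound over the $2^K$ blocks and Borel--Cantelli, exactly as in Theorem~\ref{thm:fbm}, then give
\[
\max_I\left|\frac{D^Y_{L(K),I}(4)}{|I|}-c_4\right|\to0
\]
and $S^Y_K(4)\to c_4$ almost surely. A quantitative version with $u=A\sqrt K$ yields a rate $O(\sqrt K\,2^{-d_K/4})$, which we do not need for the $C/K$ bound.

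\textbf{Transfer to $X$.} This is the main obstacle. $R$ is smooth only away from $t=0$: its derivative blows up like $t^{-1/2}$ near the origin, because $\partial_t\sqrt{2t}$ enters the variance. The plan is to use the $\ell^4$ triangle inequality blockwise. This requires a bound on $\bigl(\sum_{J\subseteq I}|\Delta_JR|^4\bigr)^{1/4}$ that is $o(|I|^{1/4})$ uniformly in $I$.

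\emph{Blocks away from the origin.} On blocks $I\subset[\tau,1]$, $R$ is almost surely Lipschitz with a random constant. The perturbation norm is then $O(\delta_K^{3/4}|I|^{1/4})$, as in Proposition~\ref{prop:stability}(1).

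\emph{Blocks near the origin.} Near $0$ I would use that $R$ is Gaussian with
\[
\E|R(t)-R(s)|^2\le C\,|t-s|^2/s\quad\text{for }0<s<t,
\]
while for $s=0$ the bound is $\le C t^{1/2}$. A Kolmogorov/Borel--Cantelli bound over dyadic cells, with a logarithmic loss, then gives $|\Delta_JR|\le C\sqrt{K}\,|J|\,s_J^{-1/2}$ for $J$ not touching $0$, and $|\Delta_JR|\le C\sqrt K\,|J|^{1/4}$ for the first cell. Summing fourth powers over fine cells in the first coarse block gives
\[
\sum_J|\Delta_JR|^4\lesssim K^2\Bigl(\delta^{\,1}+\delta^{4}\sum_{j\ge1}(j\delta)^{-2}\Bigr)=O(K^2\delta_K),
\]
which is $o(|I|)$ because $\delta_K/|I|=2^{-d_K}$. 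For other blocks the sum is even smaller.

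Should this fail, a fallback is to apply the Gaussian concentration of Lemma~\ref{lem:fgn} directly to the increment vector of $X$ in each block. Its covariance matrix differs from the $Y$ one by a matrix with small operator norm. The Lipschitz constant is still $O(m^{-1/4})$, and the mean $\E|\Delta_JX|^4=3(\E|\Delta_JX|^2)^2$ is $c_4|J|(1+o(1))$ uniformly except on the first few cells. This alternative avoids pathwise smoothness entirely, and I would likely adopt it.

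\textbf{Coarse sums and conclusion.} The same argument with $m=2^K$ gives $S_K(4)\to c_4$, proving \eqref{eq:heat-block-limit}. For \eqref{eq:heat-normalization}, use term-by-term comparison as at the end of the proof of Theorem~\ref{thm:fbm}, together with uniform convergence $A_K^{(4)}(t)\to c_4t$, which is recalled above. Finally, $X\in C^{1/4-}$, dyadic balance, and $\delta_K=2^{-L(K)}\le|\pi^K|^{a}$ verify Assumption~\ref{ass:local} and \eqref{eq:power-separation}. Theorem~\ref{thm:consistency} then gives the bracketing, nonempty root sets, and the bound $C_3/\log(2^K)=C/K$. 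The reciprocal bound follows as before.
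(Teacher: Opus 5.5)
Your fallback is essentially the paper's proof. The paper applies Gaussian concentration directly to the normalized increments $\delta^{-1/4}\Delta_JX$ of each block. It uses Swanson's covariance estimates for $X$ itself: variances within $Cj^{-3/2}$ of $\sqrt{2/\pi}$, and covariances $O((1+|i-j|)^{-3/2})$. These give $\|\Gamma\|_{\mathrm{op}}\le C$ and $|\E Q_{\delta,b,m}-c_4|\le C/m$ uniformly in the block position. It then takes a union bound and Borel--Cantelli, and finishes with Theorem~\ref{thm:consistency} and termwise comparison, as you do.

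One claim in your fallback needs adjusting. On the first block, the covariance of $X$'s normalized increments does not differ from the fGn one by a matrix of small operator norm: the defect in the first cell is of order one. What is true, and sufficient, is the following. Write $\Gamma^X,\Gamma^Y,\Gamma^R$ for the block covariance matrices and $R=Y-X$. Since $R$ is independent of $X$, $\Gamma^X=\Gamma^Y-\Gamma^R\preceq\Gamma^Y$, which bounds the Lipschitz constant.

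Your primary route is a genuinely different argument. You write $X$ as a multiple of $B^{1/4}$ minus a remainder smooth on $(0,1]$, and transfer the fBM block estimates with the $\ell^4$ triangle inequality. This reduces everything to Lemma~\ref{lem:fgn} and Theorem~\ref{thm:fbm}. The cost is extending the noise to negative times (harmless, since all claims concern the law of $X$) and a separate pathwise analysis near $t=0$. The paper's route needs neither, only covariance bounds for $X$.

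The route works, but several details are wrong:
\begin{enumerate}
\item The remainder $R=\int_{-\infty}^0\int(p_{t-s}-p_{-s})\,\mathcal W(\dd s,\dd y)$ is independent of $X$, not of $Y$. You never use this, so it is harmless.
\item $\E|Y(t)-Y(s)|^2=\sqrt{2/\pi}\,|t-s|^{1/2}$, as you state later, not $|t-s|^{1/2}/\sqrt{2\pi}$.
\item More importantly, your variance bound for $R$ is wrong. A direct computation gives $\E|R(t)-R(s)|^2=\pi^{-1/2}\bigl(\sqrt{2(t+s)}-\sqrt t-\sqrt s\bigr)\le C|t-s|^2s^{-3/2}$, and this order is sharp: at $t=2s$ the left side is a constant times $\sqrt s$. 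So your bound $C|t-s|^2/s$ fails as $s\to0$, and $R'$ blows up like $t^{-3/4}$, not $t^{-1/2}$.
\end{enumerate}

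The conclusion survives the corrected exponent. A union bound over dyadic cells gives $|\Delta_JR|\le C\sqrt K\,\delta_K\,s_J^{-3/4}$ for cells not touching $0$. For the first block, $\sum_J|\Delta_JR|^4\lesssim K^2\bigl(\delta_K+\delta_K^4\sum_{j\ge1}(j\delta_K)^{-3}\bigr)=O(K^2\delta_K)=o(|I|)$. For the block starting at $k2^{-K}$, the ratio to $|I|$ is $O(K^2k^{-3}2^{-3d_K})$. The coarse sum of $|\Delta_IR|^4$ is $O(K^22^{-K})$. With these corrections, both of your routes prove the proposition.
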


\begin{proof}
Write $v=\sqrt{2/\pi}$, so that $c_4=3v^2$. For $\delta>0$ and
$j\geq1$ with $j\delta\leq1$, put
\[
 Y_{\delta,j}=\delta^{-1/4}
       \bigl(X(j\delta)-X((j-1)\delta)\bigr).
\]
The covariance estimates in \cite[Lemma~2.1 and (2.10)]{Swanson2007}
give constants independent of $\delta,i,j$ such that
\begin{equation}\label{eq:heat-increment-bounds}
 \left|\E Y_{\delta,j}^2-v\right|\leq Cj^{-3/2},
 \qquad
 \left|\E[Y_{\delta,i}Y_{\delta,j}]\right|
       \leq C(1+|i-j|)^{-3/2}.
\end{equation}
For a block of $m$ consecutive increments, define
\[
Q_{\delta,b,m}=\frac1m\sum_{j=b+1}^{b+m}|Y_{\delta,j}|^4.
\]
Here $b\geq0$ and $m\geq1$ are integers with $(b+m)\delta\leq1$.
By \eqref{eq:heat-increment-bounds},
the block covariance matrix $\Gamma$ satisfies
$\|\Gamma\|_{\mathrm{op}}\leq C$, and
$|\E Q_{\delta,b,m}-c_4|\leq C/m$, uniformly in $\delta,b$.
Represent the block as $\Gamma^{1/2}G$ with $G$ standard Gaussian.
Then
\[
 F(G)=m^{-1/4}\|\Gamma^{1/2}G\|_4
\]
is $Cm^{-1/4}$-Lipschitz, since $\|y\|_4\leq\|y\|_2$.
Gaussian concentration~\cite{BoucheronLugosiMassart2013} gives
$\|F-\E F\|_{L^4}\leq Cm^{-1/4}$. Since $F^4=Q_{\delta,b,m}$ in law,
\[
 |\E F-c_4^{1/4}|
 \leq\|F-\E F\|_{L^4}
       +\left|(\E Q_{\delta,b,m})^{1/4}-c_4^{1/4}\right|
 \leq Cm^{-1/4}
\]
for sufficiently large $m$, uniformly in $\delta,b$.
Applying Gaussian concentration once more, for every
$0<\varepsilon<c_4$ we obtain
\begin{equation}\label{eq:heat-block-concentration}
 \Prob\bigl(|Q_{\delta,b,m}-c_4|>\varepsilon\bigr)
 \leq2\exp(-c_\varepsilon m^{1/2}),\qquad m\geq m_\varepsilon,
\end{equation}
uniformly in $\delta,b$: the event on the left forces $F$ to be at
a fixed positive distance from $\E F$ for sufficiently large $m$.
Each coarse block contains $m_K=2^{L(K)-K}$ fine intervals. Taking
$\delta=2^{-L(K)}$, its relative quartic sum is exactly
$D_{L(K),I}(4)/|I|=Q_{\delta,b,m_K}$ for the appropriate $b$.
A union bound gives
\[
 \Prob\left(\max_{I\in\Tdy^K}
     \left|D_{L(K),I}(4)/|I|-c_4\right|>\varepsilon\right)
 \leq2^{K+1}\exp\left(-c_\varepsilon2^{(L(K)-K)/2}\right).
\]
This is summable in $K$ because $L(K)-K\geq(a-1)K$.
Borel--Cantelli, applied to a countable sequence of positive
$\varepsilon$ tending to zero, proves \eqref{eq:heat-block-limit}.
Together with dyadic balance, the recalled regularity, and
$S_K(4)\to c_4>0$, this
verifies Assumption~\ref{ass:local} with $p_0=4$.
Theorem~\ref{thm:consistency} gives the root existence, consistency,
and bound \eqref{eq:heat-root-bound}.

To prove \eqref{eq:heat-normalization}, let $\eta_K$ be the maximum in
\eqref{eq:heat-block-limit}. For large $K$, $\eta_K<c_4/2$, and
termwise comparison gives
\[
 \sup_t\left|W(L(K),K,\Tdy,4,t,X)-c_4^{-1}A_K^{(4)}(t)\right|
 \leq\frac{\eta_K}{c_4(c_4-\eta_K)}S_K(4)\longrightarrow0.
\]
The recalled uniform convergence $A_K^{(4)}(t)\to c_4t$ proves
\eqref{eq:heat-normalization}.
\end{proof}
\begin{remark}[Diffusivity and noise amplitude]
The same conclusions hold for
$\partial_t u=(\kappa/2)\partial_{zz}u+\sigma\dot{\mathcal W}$,
with zero initial condition, $\kappa>0$ and $\sigma\ne0$.
At a fixed spatial point its temporal covariance is
$\sigma^2/\sqrt\kappa$ times \eqref{eq:heat-covariance}, so its law is
that of $|\sigma|\kappa^{-1/4}X$. Its quartic variation density is
$6\sigma^4/(\pi\kappa)$, while the temporal roughness remains $1/4$.
The normalized power variation statistics is invariant under nonzero constant
rescaling; it therefore requires neither $\kappa$ nor $\sigma$ as an
input. 
\end{remark}
The trajectory of $X$  can be simulated directly from its Gaussian law.
If $B^{1/4}$ is standard fractional Brownian motion, then
\begin{equation}\label{eq:heat-simulation}
 \bigl(X(t)\bigr)_{0\leq t\leq1}
 \ \overset{\mathrm{law}}{=}
 \left((2\pi)^{-1/4}
       \bigl(B^{1/4}(1+t)-B^{1/4}(1-t)\bigr)\right)_{0\leq t\leq1}.
\end{equation}
Indeed, both sides are centered Gaussian and the covariance of the
unscaled difference on the right is
$\sqrt{s+t}-\sqrt{|t-s|}$. 
We generated $1{,}000$ independent paths by circulant embedding of the
fractional Gaussian increments in \eqref{eq:heat-simulation}. The simulated covariance is exact up to floating-point
roundoff, so no space or time discretization of the SPDE \eqref{eq:heat-spde} is involved.
The same path was sampled at each fine level $L\in\{8,10,12,14,16\}$.
We used $K=L/2$ and solved
$W(L,K,\Tdy,q,1,x)=1$ in $P=[2,8]$ by bisection whenever
the endpoint values strictly bracketed $1$. 
No uniqueness assumption was imposed.

\begin{table}[htbp]
\centering
\small
\begin{tabular}{rrrrrrr}
\hline
$L$ & $K$ & Fine intervals & Mean $\widehat H$ & MC s.e. & RMSE & Failures \\
\hline
8 & 4 & 256 & 0.26068 & 0.00214 & 0.06666 & 55 \\
10 & 5 & 1,024 & 0.25335 & 0.00142 & 0.04484 & 2 \\
12 & 6 & 4,096 & 0.25194 & 0.00081 & 0.02558 & 0 \\
14 & 7 & 16,384 & 0.25129 & 0.00049 & 0.01566 & 0 \\
16 & 8 & 65,536 & 0.25097 & 0.00031 & 0.00997 & 0 \\
\hline
\end{tabular}
\caption{Normalized variation estimates for the stochastic heat equation. 1000 independent paths; true temporal roughness $H=1/4$; $L=2K$; $P=[2,8]$. Each path is reused at all resolutions.  Mean and RMSE use successful estimates.}
\label{tab:spde-numerics}
\end{table}

At $2^{16}$ fine intervals the mean estimate of $H$ was $0.25097$,
with Monte Carlo standard error $0.00031$ and RMSE $0.00997$.
With $L=2K$, all $1{,}000$ paths produced a bracketed root at each tested
level $L\in\{12,14,16\}$.
\begin{figure}[htbp]
 \centering
 \includegraphics[width=\textwidth]{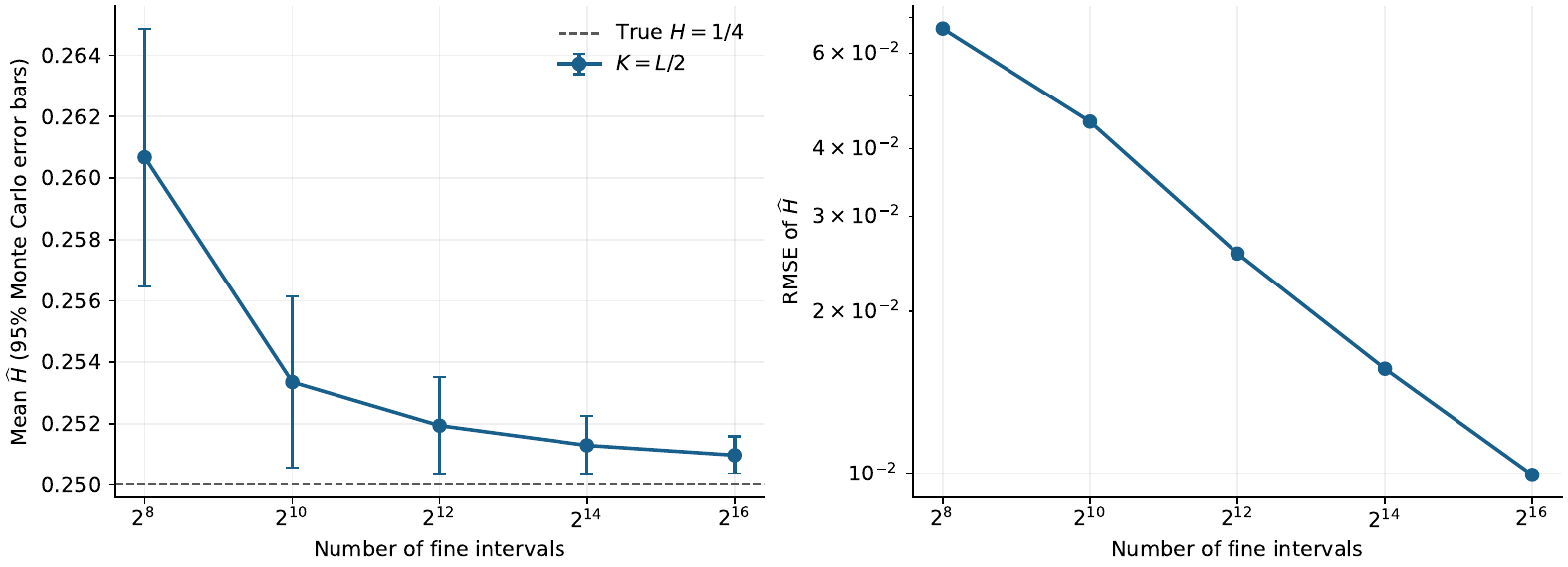}
 \caption{Estimation of the temporal roughness $H=1/4$ of the stochastic
 heat equation with $K=L/2$.
 The left panel shows means and $95\%$ Monte Carlo error bars for those
 means, not confidence intervals for an individual estimate. Right
 panel: RMSE. }
 \label{fig:spde-convergence}
\end{figure}
\section{Discussion}
We have shown a pathwise consistency result for the roughness estimator, under uniform local nondegeneracy of critical power sums at the fine scale, combined with nondegeneracy at the coarse scale and enough separation of scales. 
Convergence of critical p-th variation is not required for this result to hold. 
The stronger normalization statement requires additional
control, for which a positive continuous variation density and a vanishing
relative block error are sufficient.
For multifractal paths, whose pointwise H\"older exponent varies along the path
\cite{jaffard1997,jaffard1997b}, the uniform local bounds in
Assumption~\ref{ass:local} may fail, and our results do not apply.

The results apply to discretely sampled observations of a continuous path.
For fBM they give strong consistency under a fixed implementable sampling
rule; for signed Takagi--Landsberg functions they also give a quantitative
deterministic bound. 
Extensions such as sharp convergence rates for fBM,
asymptotic normality,  or robustness to  proxy errors such as the ones studied in \cite{das2022roughVol,ChongEtAl2024Minimax,HanSchiedRates} require further analysis, as do observation schemes based on integrated variance. 
These extensions are relevant for applications to financial data \cite{das2022roughVol}.

\newcommand{\etalchar}[1]{$^{#1}$}
\providecommand{\bysame}{\leavevmode\hbox to3em{\hrulefill}\thinspace}
\providecommand{\MR}{\relax\ifhmode\unskip\space\fi MR }
\providecommand{\MRhref}[2]{%
  \href{http://www.ams.org/mathscinet-getitem?mr=#1}{#2}
}
\providecommand{\href}[2]{#2}


\end{document}